\documentclass[12pt]{amsart}
\input epsf
\usepackage{amsmath}
\usepackage{epsfig}
\usepackage{latexsym}
\usepackage{amsfonts}
\usepackage[psamsfonts]{amssymb}
\parskip=0pt
\mathsurround=1pt

\def\C{{\bf C}}    

\def\R{{\bf R}}    

\def\D{\mathbf{D}}
\def\S{\mathbf{S}}
\def\P{\mathcal{P}}
\def\TT{\mathcal{T}}

\newtheorem{thm}{Theorem}[section]
\newtheorem{example}[thm]{Example}
\newtheorem{prop}[thm]{Proposition}
\newtheorem{lemma}[thm]{Lemma}

\newtheorem{cor}[thm]{Corollary}
\newtheorem{rmk}[thm]{Remark}
\newtheorem{df}[thm]{Definition} 

\begin{document}
\author{Andrei Gabrielov}
\date{\today}
\title{Classification of generic spherical quadrilaterals}

\begin{abstract}
Generic spherical quadrilaterals are classified up to isometry.
Condition of genericity consists in the requirement that the
images of the sides under the developing map belong to four distinct circles
which have no triple intersections.
Under this condition, it is shown that the space of quadrilaterals
with prescribed angles consists of finitely many open curves. Degeneration
at the endpoints of these curves is also determined.

Keywords: positive curvature, conic singularities, quadrilaterals,
conformal map.

MSC 2010: 30C20, 34M03.
\end{abstract}
\maketitle
\section{Introduction}
A {\sl spherical polygon} $Q$ is a surface homeomorphic to a closed disk equipped
with a Riemannian metric of constant positive curvature $1$, with $n$ conic singularities on the boundary,
labeled $a_0,\ldots,a_{n-1}$ counterclockwise, and such that the boundary arcs $[a_j,a_{j+1}]$
are geodesic. The singularities $a_j$ are the {\sl corners} of $Q$, and the boundary arcs $[a_j,a_{j+1}]$ are its {\sl sides}.

These objects appear in several areas of recent research. One of them is
the problem of classification of spherical metrics with conic singularities
on the sphere, see \cite{BMM,Lin-deg,CL,E,E2,E3,E4,EG1,EGT1,EGT2,EMP,LT,MZ,MP1,MP2} and references there.
In particular, \cite{E4} is a recent survey of the known results related to such metrics.
When all singularities lie on a circle on the Riemann sphere,
and the metric is symmetric with respect to that circle, the sphere is obtained by
gluing of two such polygons related by an anti-conformal isometry.
Thus spherical polygons provide an important class of examples of spherical metrics.
In fact, conditions for existence or non-existence of spherical metrics with prescribed angles on a sphere
in \cite{MP1}, and on tori in \cite{CL}, were obtained with the help of spherical polygons.

Another related problem is description of real solutions of Painlev\'e VI equations
with real parameters \cite{EGP}. In this problem, real special points
(zeros, poles, $1$-points and fixed points) of a solution correspond to
circular quadrilaterals. When the monodromy of the linear equation related
to the solution of Painlev\'e VI is unitarizable (conjugate to a subgroup of
$SU(2)$), these circular quadrilaterals are spherical quadrilaterals.

Complete classification of spherical triangles is known \cite{Klein}, \cite{E}, \cite{EGS}.

The structure of the set of spherical quadrilaterals with prescribed
angles strongly depends on the number of integer angles
(the angles with the radian measure an integer multiple of $\pi$).

Spherical quadrilaterals with at least one integer angle have been previously classified up to isometry:
when all angles are integers, in \cite{EG0};
when two angles are integers in \cite{EGT1};
when only one angle is integer in \cite{EGT3}.

Spherical quadrilaterals with four half-integer angles were classified in \cite{EG2}.
Any quadrilateral with such angles has two opposite sides mapped to the same circle by its developing map.

If $S$ is a surface with a Riemannian metric of curvature $1$, then
every point of $S$ has a neighborhood isometric to an open set
of the standard unit sphere $\S^2\subset \R^3$. This isometry $f$ is conformal,
therefore it is analytic and permits an analytic continuation along any
path in $S$. If $S$ is simply connected, we obtain a map $\Phi: S\to \S^2$
which is called the {\sl developing map}. The developing map is defined by
the metric up to a composition with a rotation of $\S^2$.

Conversely, if $\Delta$ is a disk in the complex plane, and $\Phi:\Delta\to \C$
a locally univalent meromorphic function such that
$\Phi(z)\sim c_j(z-a_j)^{\alpha_j},\; 0\leq j\leq n-1$
at $n$ boundary points $a_j$, and the arcs $\Phi([a_j,a_{j+1}])$ belong to
great circles in $\C$, then $\Phi$ is a developing map of a
spherical quadrilateral with angles $\pi\alpha_j$.
The metric on $\Delta$ is recovered by the formula for its length element
$ds=2|\Phi'|/(1+|\Phi|^2)$.

We call $Q$ a {\sl spherical quadrilateral} (resp., {\sl triangle, digon}) if $n=4$ (resp., $n=3$, $n=2$).
For convenience, we often drop ``spherical'' and refer simply to polygons (quadrilaterals, triangles, digons).

If a spherical polygon $Q$ has a {\sl removable} corner with the angle $1$,
the metric on $Q$ is non-singular at such a corner, thus $Q$
is isometric to a polygon with fewer corners.
However, we allow polygons with removable corners, since they may appear as building blocks of other polygons.

In this paper we consider classification of generic spherical quadrilaterals,
with the sides mapped to four generic (distinct, no triple intersections) great circles of the Riemann sphere $\S$
(although circle configurations with triple intersections will be considered in Section \ref{section:chains}).
All angles of a generic spherical quadrilateral are non-integer.
The four circles define a partition $\P$ of $\S$ with eight triangular faces and six quadrilateral faces,
such that each edge of $\P$ separates a triangular face from a quadrilateral one.
This partition is combinatorially equivalent to the
boundary of the convex hull of midpoints of the edges of a cube in $\R^3$ (see Fig.~\ref{cube}).
Two planar projections of the partition $\P$ are shown in Fig.~\ref{partition}.

\begin{figure}
\centering
\includegraphics[width=4in]{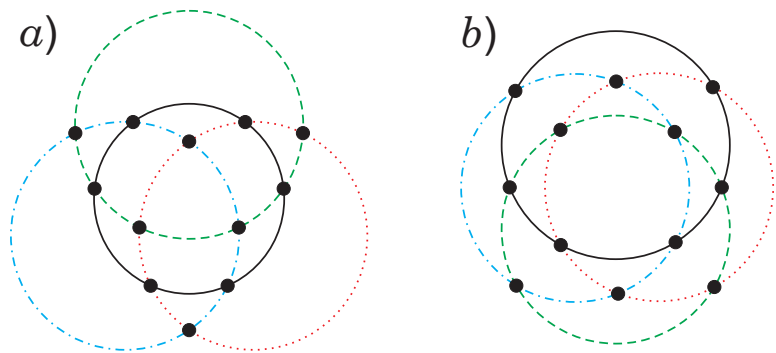}
\caption{Partition $\P$ of the Riemann sphere $\S$ by four great circles.}\label{partition}
\end{figure}
\begin{figure}
\centering
\includegraphics[width=3in]{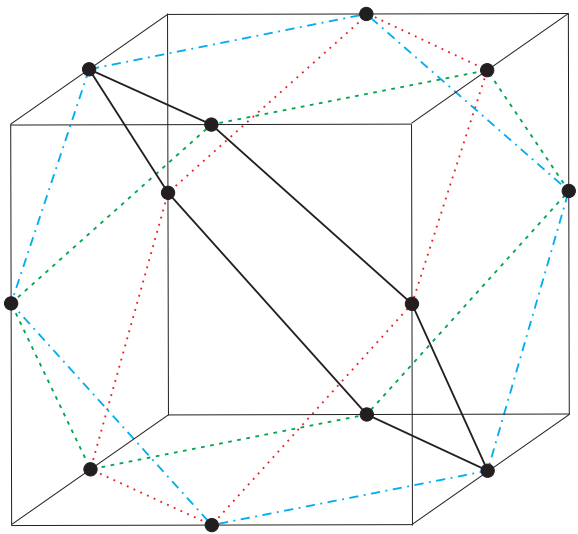}
\caption{Surface of the convex hull of midpoints of the edges of a cube, combinatorially equivalent to the partition $\P$ of $\S$.}\label{cube}
\end{figure}

Preimage of $\P$ defines a {\sl net} of a quadrilateral $Q$: a cell decomposition of $Q$
considered up to a label-preserving homeomorphism, so it is a combinatorial object (see Definition \ref{net}).
Preimage of each of the circles is the subset of the net consisting of {\sl arcs}, simple paths with the ends at vertices of the net, that may contain
corners of $Q$ only at their ends. Each side of $Q$ is a {\sl boundary arc} of its net.
An arc which is not a side of $Q$ is an {\sl interior arc}.
An interior arc with two ends at distinct corners of $Q$ is a {\sl diagonal arc}.
Note that a diagonal arc of a spherical quadrilateral may have its ends at two adjacent corners.
An arc is a {\sl loop} if it has both ends at the same vertex of the net.
A quadrilateral is {\sl irreducible} if its net does not contain a diagonal arc.
Note that an arc of a generic spherical quadrilateral may have both ends at its adjacent corners but not at its opposite corners (see Lemma \ref{nodiagonals}). An irreducible quadrilateral is {\sl primitive} if its net does not contain a loop.

For example, Fig.~\ref{partition}b can be interpreted as a net of a generic spherical quadrilateral $Q$ obtained by removing
from $\S$ an open quadrilateral face of the partition $\P$. Each circle in Fig.~\ref{partition}b consists of two arcs,
a side of $Q$ and a diagonal arc with the ends at two adjacent corners of $Q$. Thus $Q$ is not irreducible.
The net of $Q$ does not have loops. A net of an irreducible but not primitive generic spherical quadrilateral $P_1$ is shown
in Fig.~\ref{pseudo-diagonal}. It contains a {\sl pseudo-diagonal} consisting of four loops.
Removing two quadrilateral faces $puqx$ and $pvqw$ from the net of $P_1$, we get a net of the quadrilateral $Q$.
Both faces $puqx$ and $pvqw$ of $P_1$ are mapped to the same quadrilateral face of $\P$.
The edges of the net of $P_1$ in Fig.~\ref{pseudo-diagonal} (and the edges of the nets of quadrilaterals shown in other figures) have four colors (styles) indicating preimages of the four circles of the partition $\P$.

Classification of generic quadrilaterals is done as follows:
We start with four {\sl basic} quadrilaterals (see Fig.~\ref{basic}),
then build all primitive quadrilaterals using {\sl side extensions} in Section \ref{section:primitive} (see Figs.~\ref{4circles-x}-\ref{4circles-uvw}).
After that, irreducible quadrilaterals are obtained by replacing the quadrilateral face of the net of a primitive quadrilateral
containing two opposite corners in its boundary (at most one such face exists)
with one of the quadrilaterals $P_\mu$ ($\mu=1,2,\ldots$) (see Definition \ref{pmu}).
Finally, all generic quadrilaterals are obtained in Section \ref{generic} by attaching digons to {\sl short} (shorter than the full circle)
sides of irreducible quadrilaterals (see Theorem \ref{quad}).

Conditions on the fractional parts of the angles of spherical quadrilaterals with a given net define a pyramid $\Pi$
in the unit cube of $\R^4$ (see Proposition \ref{abcd}) or a pyramid obtained from $\Pi$ by replacing some of the angles by their complementary angles.
These conditions are compatible with the {\sl closure condition} in \cite{MP1} that implies that,
for the existence of a generic quadrilateral with the fractional parts $(\alpha,\beta,\gamma,\delta)$ of the angles, and with even (resp., odd) sum of the integer parts, the distance $d_1$ between the point $(\alpha,\beta,\gamma,\delta)$ and the odd (resp., even) integer lattice must be greater than 1.
In fact, the union of $\Pi$ and all pyramids obtained from $\Pi$ by taking an even number of
complements coincides with the 4-dimensional cross-polytope (also known as 16-cell or co-cube) which contains all points in the unit cube of $\R^4$
at the distance $d_1>1$ from the odd integer lattice, other than the point $(\frac12,\frac12,\frac12,\frac12)$
corresponding to a spherical rectangle.
It was shown in \cite{EG2} that a spherical rectangle cannot be generic: two of its opposite sides are mapped to the same circle.

In Section \ref{section:chains} we consider chains of spherical quadrilaterals and their nets.
If all four angles of a generic spherical quadrilateral $Q$ are fixed, quadrilaterals with the same net $\Gamma$ as $Q$ constitute
a one-parametric family (an open segment) $I_\Gamma$ in the space of all spherical quadrilaterals.
An important function on this family is the {\sl modulus} $K$ of the quadrilateral.
Every conformal quadrilateral can be mapped conformally onto a rectangle, so that the corners
$(a_0,\ldots,a_3)$ correspond to the corners of the rectangle $(0,1,1+iK,iK)$, where $K>0$ is the modulus.
We say that a sequence (or a family) of quadrilaterals conformally degenerates if $K$ tends to $0$ or $\infty$.

Unlike quadrilaterals with at least one integer angle considered in \cite{EG0,EGT1,EGT2,EGT3},
different generic quadrilaterals in $I_\Gamma$ are mapped to conformally non-equivalent four-circle configurations.
Six angles between four great circles satisfy a single relation. Only four angles are fixed in $I_\Gamma$
as fractional parts of the angles of a quadrilateral.
This leaves a ``fifth angle'' (the angle between the circles corresponding to the opposite sides of a quadrilateral)
that is not fixed in $I_\Gamma$. The main results of Section \ref{section:chains} are 
conditions on the ``fixed'' angles of a four-circle configuration that allow it to be deformed to a
configuration with a triple intersection (Proposition \ref{degen})
and relations between the fractional parts of the angles of a quadrilateral and the angles of the corresponding four-circle configuration,
depending on the net of a quadrilateral(Propositions \ref{even} and \ref{odd}).

At the ends of the segment $I_\Gamma$, a quadrilateral may conformally degenerate, or converge to a spherical quadrilateral with a non-generic four-circle
configuration containing a triple intersection, or else, after appropriate conformal transformations, converge to a non-spherical
(circular) quadrilateral with a four-circle configuration which is not conformally equivalent to a spherical one.
In the second case, the segment $I_\Gamma$ can be extended beyond the quadrilateral with a non-generic four-circle configuration
to a segment $I_{\Gamma'}$ of generic quadrilaterals with a different net $\Gamma'$.
A maximal family of spherical quadrilaterals obtained by such extensions is called a {\sl chain} of quadrilaterals, and
the sequence of nets of such family is called a chain of nets (see Definition \ref{chain}).
Any chain contains quadrilaterals with finitely many nets. It is an open segment in the space of spherical quadrilaterals, and the quadrilaterals at its ends either conformally degenerate or converge to non-spherical quadrilaterals beyond which the chain cannot be extended.
The chains of generic spherical quadrilaterals depend not only on their nets but also on some inequalities between the fractional parts of their angles.
This makes counting the chains of generic quadrilaterals much harder than counting the chains of the quadrilaterals
with at least one integer angle considered in \cite{EGT1,EGT2,EGT3}. As an example,
Propositions \ref{x-even} and \ref{x-odd} describe different possibilities
for the chains of quadrilaterals with nets $X_{kl}$ and $X'_{pq}$, depending on the integer and fractional parts of their angles.

The {\sl length} of a chain of quadrilaterals is the non-negative number of ``links'' in it corresponding to four-circle configurations with
triple intersections. Thus the number of nets in a chain is greater by one than its length. If the quadrilaterals conformally degenerate
in the limits at both ends of a chain, the two degenerations are either of the same or of the opposite kind,
depending on the parity of the length of the chain.
If the length of a chain is even (for example, if all quadrilaterals in a chain of length $0$ have the same net)
then the modulus $K$ of the quadrilaterals in that chain converges to distinct values $0$ and $\infty$ at the two ends of the chain.
In that case, the chain contains at least one quadrilateral with each value $K>0$ of the modulus.
If the length of a chain is odd, the modulus $K$ of the quadrilaterals in that chain converges
to the same value (either $0$ or $\infty$) at both ends of the chain.
In that case, the chain contains at least two quadrilaterals with either sufficiently small or sufficiently large values of the modulus $K$.
Thus classification of the chains of quadrilaterals allows one, in principle, to obtain lower bounds for the number of
quadrilaterals with the given angles and modulus,
and to count quadrilaterals with the given angles and either small enough or large enough value of the modulus.
This is a hard combinatorial problem, which is not addressed in this paper.
Note that, according to Lemma \ref{onebigangle}, if the angles at three corners of a quadrilateral $Q$ are less than $1$,
then the net of $Q$ is of one of the types $X$, $X'$, $\bar X$, $\bar X'$. 
Thus Propositions \ref{x-even} and \ref{x-odd} provide the answer for such quadrilaterals.

When the fractional parts of the angles of quadrilaterals with a given net $\Gamma$ satisfy an additional equality (see Remark \ref{rmk:tangent})
then a family $I_\Gamma$ of spherical quadrilaterals may degenerate at one end of the segment $I_\Gamma$ so that the modulus converges to
a finite positive value.
Applying an appropriate family of linear-fractional transformations to the sphere, one can replace the family of four-circle configurations
to which the family $I_\Gamma$ is mapped by a conformally equivalent family of configurations of four not necessarily great circles,
converging to a configuration with a single quadruple intersection (see Fig.~\ref{quadruple}),
so that the corresponding family of circular quadrilaterals would be converging to a non-degenerate circular (non-spherical) quadrilateral
(see Example \ref{chain-x-quadruple} and Figure \ref{fig:chainxquadruple}).
This phenomenon was observed in \cite{LW}, \cite{EG2}, \cite{MP2} and \cite{EGMP}.
Note that in all previously considered cases the angles were half-integer, while there is a single equality satisfied by the angles of the quadrilaterals in Example \ref{chain-x-quadruple}. This equality is compatible with the {\sl non-bubbling condition} in \cite{MP2} (see also \cite{E4}).

The author thanks Alexandre Eremenko who opened to him the fascinating world of spherical
polygons and spherical metrics with conical singularities. He explained that, although generic spherical
triangles were classified by Felix Klein in the beginning of the last Century, classification of generic spherical quadrilaterals remained
an open problem, and asked whether it could be solved using the nets, a technique developed in \cite{EG0}.
After almost ten years after that conversation, a (partial) answer to his question is presented in this paper.
The author is grateful to Prof.~Eremenko and the anonymous referees for suggesting numerous improvements to this text.

\section{Generic quadrilaterals and their nets}\label{section:nets}

\begin{df}\label{net}\normalfont Let $Q$ be a generic spherical quadrilateral with the sides mapped to
four circles of a partition $\P$ of the sphere $\S$.
Preimage of $\P$ defines a cell decomposition $\Gamma$ of $Q$, called the {\sl net} of $Q$.
The {\sl vertices}, {\sl edges} and {\sl faces} of $\Gamma$ are connected components
of the preimages of vertices, edges and faces of $\P$.
For simplicity, we call them vertices, edges and faces of $Q$.
The net $\Gamma$ has the same types of faces (triangles and quadrilaterals)
as the partition $\P$, with the same adjacency rules: if two faces of $\Gamma$ have a common edge then one of them is a triangle
and another one a quadrilateral.

The corners of $Q$ are vertices of its net $\Gamma$. The {\sl order} of a corner $p$ of $Q$ is the integer part of the angle of $Q$ at $p$.
Since the angle of $Q$ at $p$ is not integer and the four-circle configuration does not have triple intersections, the two sides of $Q$
adjacent to $p$ map to two different circles, thus the degree of a corner as a vertex of $\Gamma$ is even.
In addition, the net $\Gamma$ of $Q$ may have {\sl interior vertices} of degree 4 and
{\sl lateral vertices} (on the sides but not at the corners) of degree 3.

The {\sl order} of a side $L$ of $Q$ is the number of edges of $\Gamma$ in $L$.
The union of three consecutive edges on the same circle of $\P$ is a half-circle
with both ends at the intersection of the same two circles of $\P$.
Since the opposite sides of $Q$ are mapped to different circles,
corners at the ends of each of its side cannot be mapped to intersection of the same two circles,
thus the order of a side of a generic quadrilateral $Q$ cannot be divisible by 3.
A side of $Q$ is {\sl short} (shorter than a full circle) if its order is less than 6, and
{\sl long} otherwise.

Two spherical polygons $Q$ and $Q'$ are
{\sl combinatorially equivalent} if there is an orientation preserving
homeomorphism $Q\to Q'$ mapping the corners $a_j$ of $Q$ to the corners $a'_j$ of $Q'$
and the net $\Gamma$ of $Q$ to the net $\Gamma'$ of $Q'$.
\end{df}

\begin{df}\label{arcs}\normalfont Let $Q$ be a generic spherical quadrilateral with the sides mapped to four circles of a partition $\P$ and the net $\Gamma$.
If $C$ is a circle of $\P$ then the intersection $\Gamma_C$ of $\Gamma$ with the preimage
of $C$ in $Q$ is called the $C$-{\sl net} of $Q$.
All interior vertices of $\Gamma_C$ have degree two.
A $C$-{\sl arc} of the net of $Q$ (or simply an arc when $C$ is not specified) is
a simple path in $\Gamma_C$ with the ends at vertices of $\Gamma$, which may have corners of $Q$ only at its ends.
The {\sl order} of an arc is the number of edges in it.
An arc is a {\sl loop} if it is a closed path.
An arc $\gamma$ of $Q$ is {\sl lateral} if it is a subset of a side of $Q$.
Otherwise, $\gamma$ is an {\sl interior arc}.
An arc is {\sl maximal} if it cannot be extended to a larger arc.

A {\sl diagonal} arc is an interior arc of $Q$ with both ends at distinct corners of $Q$.
A spherical quadrilateral $Q$ is {\sl irreducible} if it does not have a diagonal arc.
An irreducible quadrilateral is {\sl primitive} if it does not contain a loop.
\end{df}

\subsection{Quadrilaterals $P_\mu$ and pseudo-diagonals}\label{sub:pmu}

\begin{lemma}\label{loop}
Let $Q$ be an irreducible spherical quadrilateral that contains a loop $\gamma$.
Then $\gamma$ has a vertex at a corner of $Q$.
\end{lemma}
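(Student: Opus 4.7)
The plan is to argue by contradiction: assume the basepoint $v$ of $\gamma$ is not a corner of $Q$, and produce a diagonal arc, contradicting irreducibility.

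First I would rule out $v$ being a lateral vertex. If the side through $v$ is mapped to the same circle $C$ as $\gamma$, both $C$-edges at $v$ lie on $\partial Q$, so $\gamma$ begins by running along that side; since at every interior lateral vertex of a $C$-side the unique interior edge lies on a different circle, $\gamma$ can leave the side only at a corner — but arcs admit corners only at their endpoints, and $v$ is not one. If instead the side at $v$ lies on a different circle, at most one $C$-edge meets $v$, too few to carry a loop.

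Hence $v$ is an interior vertex of degree $4$; its two $C$-edges are opposite, so $\gamma$ is a smooth simple closed geodesic in $Q$. The same lateral-vertex analysis applied to any other candidate boundary vertex of $\gamma$ shows $\gamma$ is disjoint from $\partial Q$, so $\gamma$ bounds an open disk $D \subset \mathrm{int}(Q)$. Because $\Phi|_{\bar D}$ is locally univalent ($\bar D$ contains no corners), maps $\gamma$ onto $C$, and $D$ is simply connected, $\Phi|_D$ is a connected covering of one of the two open hemispheres $H$ of $\S$ bounded by $C$; since $H$ is simply connected this covering has degree one, so $\Phi|_{\bar D}\colon \bar D \to \bar H$ is a homeomorphism. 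Pulling back the cell structure of $\bar H$, the disk $\bar D$ acquires the combinatorics of a hemisphere: six boundary vertices on $\gamma$ arranged as three antipodal pairs $(v_{j,1}, v_{j,2})$ over the intersections of $C$ with the other three circles $C_1, C_2, C_3$ of $\P$; three interior vertices; and three interior $C_j$-arcs $\kappa_j$, each joining $v_{j,1}$ to $v_{j,2}$ through $D$.

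Each $\kappa_j$ extends through both of its endpoints into the annulus $A = Q \setminus \bar D$ to a maximal $C_j$-arc $\eta_j$ of $Q$; each endpoint of $\eta_j$ lies on $\partial Q$, at a corner or at a lateral vertex where the arc terminates. To conclude it suffices to show that for at least one $j$ the two endpoints of $\eta_j$ are distinct corners, yielding a diagonal arc and contradicting the irreducibility of $Q$.

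This last step is the main obstacle. I would combine three ingredients: (i) the cyclic order of the six ports on $\gamma$ — three antipodal pairs on a circle always interleave, so the ports appear as $v_{1,1}, v_{2,1}, v_{3,1}, v_{1,2}, v_{2,2}, v_{3,2}$; (ii) the fact that $\partial Q$ carries exactly four corners; and (iii) the lateral-vertex obstruction established in the first step, which also forbids any $\eta_j$ from closing up as a loop at a lateral vertex. A topological/combinatorial case analysis of how the three embedded arcs $\eta_j$ thread through $A$ from their prescribed ports on $\gamma$ to their endpoints on $\partial Q$ should then force at least one $\eta_j$ to connect two distinct corners of $Q$, producing the desired diagonal arc.
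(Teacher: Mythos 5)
Your opening moves are sound and agree with the paper's: a loop through a lateral vertex would have to run along a side of $Q$ (so its basepoint must be a corner if it meets $\partial Q$ at all), and once all vertices of $\gamma$ are interior, $\gamma$ bounds a disk $D$ mapped bijectively onto a closed hemisphere $\bar H$ of $C$, carrying the six boundary vertices, three interior vertices and three chords $\kappa_j$ you describe. The problem is that everything after that is a plan rather than a proof. The contradiction is supposed to come from showing that some maximal extension $\eta_j$ of $\kappa_j$ joins two distinct corners of $Q$, but you explicitly defer this to ``a topological/combinatorial case analysis,'' and the three ingredients you list do not obviously carry it out: a maximal $C_j$-arc can terminate at a lateral vertex rather than a corner, two of the three arcs can legitimately cross again in the annulus $A$ (each pair of circles has a second intersection point outside $\bar H$), and an $\eta_j$ could in principle close up into a larger loop through interior vertices, which your lateral-vertex observation does not exclude. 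None of these configurations is eliminated by the interleaving of the six ports together with the count of four corners, so the decisive step is genuinely missing.

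The paper closes this gap with a face argument rather than an arc-chasing one, and it is worth seeing why it is shorter. Since every vertex of $\gamma$ is interior of degree $4$, each of the six edges of $\gamma$ has a well-defined face of $\Gamma$ on the outside of $D$, and these six faces map one-to-one onto the six faces of $\P$ adjacent to $C$ in the opposite hemisphere. The union $T$ of $D$ with these faces therefore maps one-to-one onto the complement of a single triangular face $F_0$ of $\P$, so $T$ is a spherical triangle whose three corners are the preimages of the vertices of $F_0$ (in your picture, the first crossings of the $\eta_j$ inside $A$). Each such corner is incident to three faces of $T$, hence to four edges of $\Gamma$, so it cannot be a lateral vertex; and if one of them is an interior vertex, $\Gamma$ must contain the missing face over $F_0$, which would close $Q$ up into the sphere unless the other two corners of $T$ lie on $\partial Q$. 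Either way at least two corners of $T$ are corners of $Q$, and the side of $T$ joining them is an interior arc, contradicting irreducibility. If you want to salvage your route, this is exactly the lemma you need to insert in place of the final sketch: it is the degree count at the crossing points of the $\eta_j$ in $A$, plus the ``otherwise $Q$ contains the sphere'' alternative, that forces corners of $Q$ to appear, not the behaviour of the $\eta_j$ at their far endpoints on $\partial Q$.
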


\begin{proof}
If all vertices of $\gamma$ are interior vertices of $Q$, then $\gamma$ bounds a disk $D$ inside $Q$
which is mapped one-to-one onto a disk in the sphere $\S$ bounded by a circle $C$ of the partition $\P$,
and $\gamma$ maps one-to-one onto $C$.
Since all vertices of $\gamma$ are interior vertices of the net $\Gamma$ of $Q$,
the faces of $\Gamma$ adjacent to $\gamma$ map one-to-one to the faces of $\P$ adjacent to $C$.
The union of $D$ and all these faces is a spherical triangle $T$
which maps one-to-one to the complement of a triangular face of the partition $\P$ (see Fig.~\ref{partition}a).
Note that the corners of $T$ cannot be lateral vertices of $\Gamma$, since they have degree 4.
If a corner of $T$ is an interior vertex of $\Gamma$,
then $\Gamma$ must have a triangular face completing the image of $T$ to the full sphere.
Since $Q$ is not a sphere, this is possible only when the other two corners of $T$ are corners of $Q$.
Since any two corners of $T$ are connected by an interior arc, $Q$ is not irreducible, a contradiction.
\end{proof}

\begin{lemma}\label{p1}
Let $Q$ be an irreducible spherical quadrilateral that contains a loop $\gamma$.
Then $Q$ contains a quadrilateral combinatorially equivalent to the quadrilateral
$P_1$ shown in Fig.~\ref{pseudo-diagonal}.
\end{lemma}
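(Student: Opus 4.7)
The plan is to begin from Lemma \ref{loop}, which places a corner $p$ of $Q$ on $\gamma$. Since an arc of the net may contain corners only at its endpoints (Definition \ref{arcs}) and the endpoints of a loop coincide, $p$ is the unique corner of $Q$ lying on $\gamma$. Moreover, a lateral vertex of $\Gamma$ has degree one in the net restricted to the unique circle not containing its incident side, so $\gamma$ also cannot pass through any lateral vertex. Hence $\gamma$ meets $\partial Q$ only at $p$, all remaining vertices of $\gamma$ are interior vertices of $\Gamma$, and $\gamma$ bounds a closed topological disk $D\subset Q$ with $D\cap\partial Q=\{p\}$.

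I would then apply the strategy of Lemma \ref{loop} in this setting. Because the developing map $\Phi$ is locally univalent and $D$ is simply connected, a covering-space argument using the simple-connectedness of the two hemispheres bounded by the circle $C$ carrying $\gamma$ shows that $\Phi|_D$ is a homeomorphism of $D$ onto one of the closed hemispheres $H$ bounded by $C$. The six faces of $\Gamma$ lying in $Q\setminus D$ and adjacent to $\gamma$ then map to the six faces of $\P$ adjacent to $C$ in the opposite hemisphere $H'$, and together with $D$ they form a subregion $T\subset Q$ whose image is $\S$ with a single triangular face of $\P$ removed. The three outer corners of $T$ are vertices of $\Gamma$ of local degree $4$, so none can be a lateral vertex of $\Gamma$, and irreducibility forbids two of them from simultaneously being corners of $Q$ (otherwise a side of $T$ would become a diagonal arc).

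The heart of the argument is to show that the ``missing'' triangular face of $\P$ must pull back to a face of $\Gamma$ attached at every outer corner of $T$ that is an interior vertex, and then to verify that the resulting enlarged subregion cannot cover all of $\S$ unless a second corner $q$ of $Q$ occupies one of the outer corners of $T$, with the remaining two outer corners being interior vertices through which the pulled-back triangular face attaches. A short check using the position of the missing triangle in $H'$ rules out that $q$ is adjacent to $p$ along a side of $T$, so $q$ is the corner opposite to $p$. A symmetric analysis on the other side of the pulled-back triangular face, together with the two quadrilateral faces $puqx$ and $pvqw$ that map to a common square of $\P$, then assembles the pseudo-diagonal of four loops and produces the subcomplex of $\Gamma$ shown in Fig.~\ref{pseudo-diagonal}.

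The main obstacle will be the combinatorial bookkeeping in this extension step: one must enumerate the ways in which the pulled-back missing triangle can attach consistently within $Q$, ruling out every configuration in which the extension meets $\partial Q$ away from $p$, or reaches a corner adjacent to $p$, before establishing the symmetric structure around $q$. Once that case analysis is complete, identifying the resulting subcomplex of $\Gamma$ with the net of $P_1$ reduces to a direct combinatorial match with Fig.~\ref{pseudo-diagonal}.
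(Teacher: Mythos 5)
Your setup is correct and coincides with the paper's: by Lemma \ref{loop} the loop $\gamma$ carries a corner $p$ of $Q$, all other vertices of $\gamma$ are interior by irreducibility, the disk $D$ bounded by $\gamma$ maps one-to-one onto a hemisphere bounded by $C$, and the union of $D$ with the six faces adjacent to $\gamma$ outside $D$ maps onto the complement of a triangular face of $\P$ (the paper's shaded quadrilateral $G$ with corners $p,u,q,b$, the outer corners $u,q,b$ being preimages of the vertices of the missing triangle). The gap is everything after that. You label the extension step ``the heart of the argument'' and ``the main obstacle'' and leave it as a plan, and the mechanism you propose for it does not work here. The criterion ``the enlarged subregion cannot cover all of $\S$'' is the contradiction engine of Lemma \ref{loop}; in the present situation, once the missing triangle is pulled back at an interior outer corner, the image of the enlarged region \emph{is} all of $\S$ and no contradiction arises, because the new triangular face is attached along a single boundary edge of $G$ and introduces a new vertex $v$ with $\Phi(v)=\Phi(u)$ but $v\ne u$, so nothing closes up into a sphere. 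Likewise, ``irreducibility forbids two outer corners from simultaneously being corners of $Q$'' is true but too weak to determine the configuration, and your envisioned outcome (both remaining outer corners interior, with the missing triangle attached at each) is not what is forced: in $P_1$ itself the vertex $u$ is a corner, not an interior vertex.

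What actually drives the paper's proof is a local three-way test applied to specific vertices. Since the angle of $Q$ at $p$ exceeds the angle $2\pi$ accounted for by $D$ and the two faces of $G$ at $p$, at least one of the boundary arcs of $G$ emanating from $p$ (say $pb$, mapped to the second circle through $\Phi(p)$) is an interior arc; irreducibility then forbids $b$ from being a corner of $Q$, and $b$ carries more than one interior edge, so it is not lateral; hence $b$ is an interior vertex, its full star lies in $\Gamma$, and the fourth face at $b$ is the triangular face $bqv$ mapping onto the missing triangle. Repeating the same test at the new vertex $v$ (again joined to $p$ by an interior arc, again with two interior edges) forces the quadrilateral face $pvqw$, and $G\cup bqv\cup pvqw$ is already a copy of $P_1$; that $q$ is a corner of $Q$ opposite $p$ then follows because $q$ has acquired degree six. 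Your proposal never isolates the decisive fact that $b$, and then $v$, are joined to $p$ by \emph{interior} arcs, and without that the deferred ``combinatorial bookkeeping'' has no engine. As written, the proposal is an accurate account of the first half of the argument but does not prove the lemma.
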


\begin{proof}
According to Lemma \ref{loop}, the loop $\gamma$ has a vertex $p$ on the boundary of $Q$.
It cannot be a lateral vertex (otherwise $\gamma$ would be a side of $Q$) thus $p$ is a corner of $Q$.
Since $Q$ is irreducible, all other vertices of $\gamma$ must be interior vertices of the net $\Gamma$ of $Q$.
The loop $\gamma$ bounds a disk $D\subset Q$ mapped one-to-one to a disk bounded by a circle $C$ of $\P$.
Thus $\Gamma$ must contain the union of $D$ and six more faces adjacent to $\gamma$ outside $D$,
which is the quadrilateral $G$ with corners $p,\,u,\,q,\,b$ shaded in Fig.~\ref{pseudo-diagonal}.
Since $b$ is connected to $p$ by an interior arc, it cannot be a corner of $Q$.
Also, $b$ cannot be a lateral vertex of $\Gamma$ since its degree is greater than 3.
Thus $b$ is an interior vertex, and $\Gamma$ contains a triangular face with vertices $b,\,q,\,v$.
Similarly, $v$ cannot be a corner of $Q$, as it is connected to $p$ by an interior arc.
Thus $v$ is an interior vertex, and $\Gamma$ contains a quadrilateral face with vertices $p,\,v,\,q,\,w$.
This implies that $Q$ contains the quadrilateral $P_1$ with corners $p,\,u,\,q,\,w$ shown in Fig.~\ref{pseudo-diagonal}, completing the proof of Lemma \ref{p1}.
Note that vertices $p$ and $q$ of $P_1$ must be opposite corners of $Q$.
\end{proof}

\begin{figure}
\centering
\includegraphics[width=4.8in]{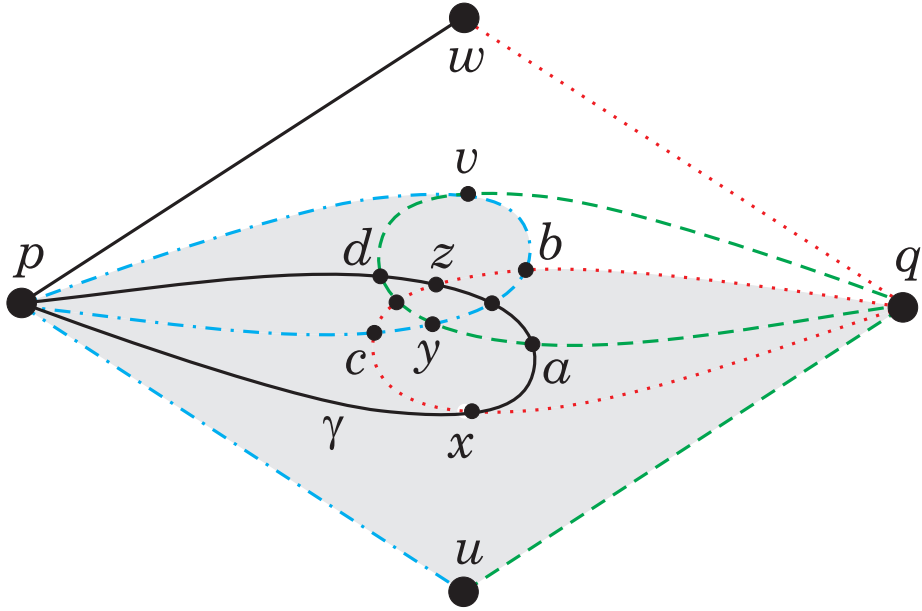}
\caption{The net of the quadrilateral $P_1$ with corners $p,\,u,\,q,\,w$.}\label{pseudo-diagonal}
\end{figure}

\begin{df}\label{pmu}\normalfont
Starting from the quadrilateral $P_1$ shown in Fig.~\ref{pseudo-diagonal},
we define a sequence $P_\mu$ of non-primitive irreducible
quadrilaterals as follows.
All sides of $P_\mu$ have order 1.
Two opposite corners $p$ and $q$ of $P_\mu$ have
order $2\mu$, and two other corners have order 0.
The net of $P_\mu$ contains $\mu+1$ quadrilateral faces
having both $p$ and $q$ as their opposite vertices.
These faces are separated by $\mu$ ``pseudo-diagonals,''
each consisting of four loops, two of them having a common vertex $p$ and another two a common vertex $q$.
For $\mu\ge 1$, the quadrilateral $P_{\mu+1}$ is obtained by replacing
any one of these $\mu+1$ faces of $P_\mu$ by the quadrilateral $P_1$.

For convenience, we define $P_0$ to be a spherical quadrilateral which maps one-to-one to a single quadrilateral face of the partition $\P$.
\end{df}

\subsection{Spherical digons}\label{sub:digons}
A spherical digon $D$ has two corners with equal angles, which may be integer and even removable,
and two short sides. Since the boundary of $D$ consists of arcs of at most two circles,
geometrically (up to conformal equivalence) $D$ is completely determined by the angles at its corners.
(See \cite{EGT1}, Theorem 4.1.)
However, since we need spherical digons as building blocks of spherical quadrilaterals,
we define their nets as preimages of all four circles of the partition $\P$.
The following Lemma provides classification of combinatorially distinct irreducible spherical digons.

\begin{lemma}\label{digon-irreducible}
There are three combinatorially distinct types of irreducible digons: $D_{15}$, $D_{24}$ and $D_{33}$ (see Figs.~\ref{D15}, \ref{D24} and \ref{D33}).
Digons $D_{15}$ and $D_{24}$ have integer corners of order 1.
Digon $D_{33}$ has non-integer corners of order 0.
There are two sorts, $D^a_{15}$ and $D^b_{15}$, of digons $D_{15}$
(see Fig.~\ref{D15}). Their nets have reflection symmetry.
There are two sorts, $D^a_{24}$ and $D^b_{24}$, of digons $D_{24}$
(see Fig.~\ref{D24}). Their nets are reflection symmetric to each other.
There are two sorts, $D^a_{33}$ and $D^b_{33}$, of digons $D_{33}$
(see Fig.~\ref{D33}). Their nets have two reflection symmetries.
\end{lemma}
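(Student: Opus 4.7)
The plan is to work from the outside in: first classify the possible images of the two boundary arcs of $D$ in the partition $\P$, then determine the corner angles from that configuration, then enumerate the compatible interior nets, and finally read off the reflection symmetries. Since the boundary of $D$ consists of two short arcs meeting at the two corners, the developing map carries them onto at most two circles of $\P$. If both sides are mapped to a single circle $C$, their images are two disjoint arcs of $C$ sharing their endpoints, so they jointly cover $C$; because $C$ has six edges in $\P$ and each side has order less than $6$, the unordered pair of side orders is one of $(1,5)$, $(2,4)$, $(3,3)$. If the two sides are mapped to two distinct circles $C,C'$, then they must be the two arcs joining the antipodal pair of points in $C\cap C'$, each of order $3$.

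The next step is to pin down the corner angles from the boundary configuration. In the single-circle case, both sides are tangent to $C$ at each corner, so the angle there is an integer; a local analysis, together with shortness of the sides, forces this integer to be $1$, yielding the digons $D_{15}$ and $D_{24}$ from the order pairs $(1,5)$ and $(2,4)$. The single-circle pair $(3,3)$ must be excluded as reducible: the two corners are antipodal vertices of $\P$ on $C$, so they lie on a common other circle $C_i$ of $\P$, whose preimage in $D$ contains an interior arc joining them, i.e., a diagonal. In the two-circle case the angle at each corner is the angle between the two great circles of $\P$ at their intersection, which is strictly between $0$ and $\pi$, giving corner order $0$; this is the digon $D_{33}$.

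It remains to enumerate the interior nets. Once the boundary configuration is fixed, the interior net of $D$ is the preimage in $D$ of the one or two circles of $\P$ not appearing on the boundary. Because so few circles remain, and the face-adjacency rule (each edge separates a triangular face from a quadrilateral one) together with the degree constraints at interior vertices (degree $4$) and lateral vertices (degree $3$) severely restrict the admissible arrangements, a direct case analysis yields a short finite list of candidates in each case, of which only two survive the irreducibility test. The reflection symmetries then follow by inspection: within each of $D_{15}$ and $D_{33}$ both nets individually admit a reflection interchanging the two corners, while the two nets of $D_{24}$ cannot be self-symmetric since the two sides have distinct orders $2$ and $4$, so any reflection swapping the corners must instead interchange the two sorts.

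The principal obstacle is the interior enumeration in the third step. One must verify, on the one hand, that each claimed net is realized by an honest spherical digon, which requires exhibiting a consistent developing map, and on the other, that every other combinatorially plausible net fails irreducibility. The bookkeeping, best done with the help of the figures, involves tracking how the remaining circles of $\P$ can pull back across $D$ subject to all local constraints. The outer boundary-and-angle analysis sketched above is the sturdy skeleton that keeps this case analysis finite and manageable.
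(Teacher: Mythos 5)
Your proposal is correct and follows essentially the same route as the paper: the paper also splits into the case of integer corners (both sides on one circle $C$, the digon mapping one-to-one onto a disk bounded by $C$, with the corner pair constrained by irreducibility to be non-antipodal, hence order pairs $(1,5)$ or $(2,4)$) and the case of non-integer corners (two distinct circles, giving $D_{33}$), with the final identification of the sorts read off from the figures. Your explicit exclusion of the $(3,3)$ one-circle case via the diagonal arc on the common second circle is exactly the paper's condition that the two corner images not be connected by an interior arc.
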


\begin{proof}
Let $D$ be an irreducible digon with the corners $p$ and $q$,
and the sides $L$ and $L'$.
Then $L$ and $L'$ map to some circles $C$  and $C'$ of the partition $\P$ (possibly, to the same circle).
If the equal angles at the corners of $D$ at $p$ and $q$ are non-integer then $C'\ne C$,
and $D$ is a digon of a partition of $\S$ by the two circles $C$ and $C'$.
Since $D$ is irreducible, it is either $D^a_{33}$ or $D^b_{33}$,
with the face of the net of $D$ adjacent to its corner being a quadrilateral or a triangle,
respectively.

If the angles at the corners of $D$ are integer then $C'=C$,
$D$ maps one-to-one onto a disk $\D$ bounded by $C$,
and, since $D$ is irreducible, the images of $p$ and $q$ are two vertices of the partition $\P$ on $C$ which are not connected by an
interior arc.
Thus $D$ is one of the digons $D^a_{15}$, $D^b_{15}$, $D^a_{24}$, $D^b_{24}$.
\end{proof}

\begin{figure}
\centering
\includegraphics[width=4in]{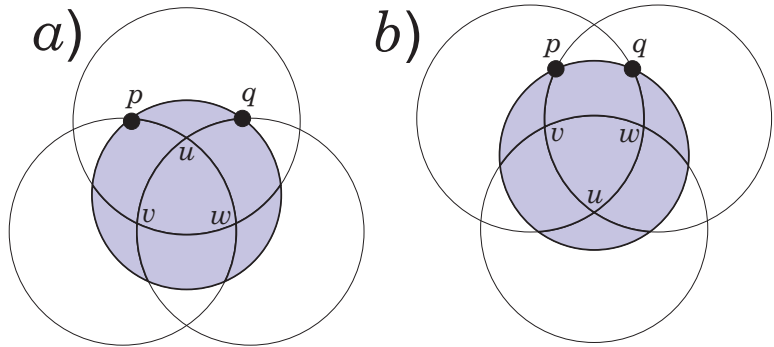}
\caption{Irreducible digons of type $D_{15}$.}\label{D15}
\end{figure}

\begin{figure}
\centering
\includegraphics[width=4in]{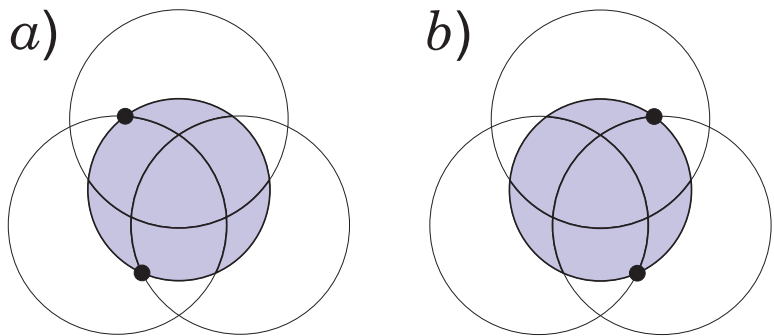}
\caption{Irreducible digons of type $D_{24}$.}\label{D24}
\end{figure}

\begin{figure}
\centering
\includegraphics[width=4in]{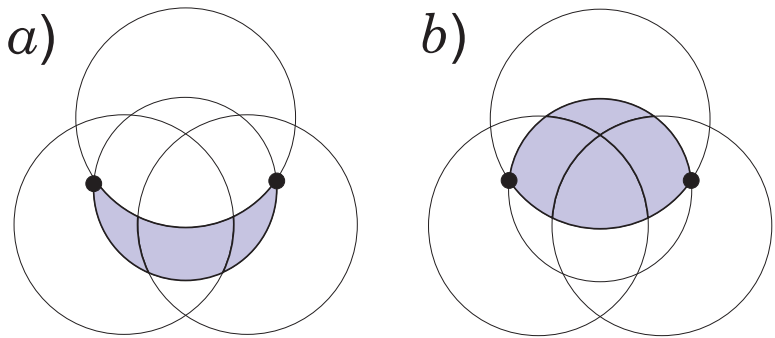}
\caption{Irreducible digons of type $D_{33}$.}\label{D33}
\end{figure}

\begin{thm}\label{digon}
Any spherical digon $D$ is a union of $k>0$ irreducible digons
of the same type with common vertices,
glued together along their common sides.
The type of irreducible digons in $D$ is called the type of $D$.
A digon $D_{15}$ can be only glued to a digon $D_{15}$
of a different sort.
A digon $D_{24}$ can be only glued to a digon $D_{24}$ of the same sort.
A digon $D_{33}$ can be only glued to a digon $D_{33}$ of a different sort.
The corners at both vertices of $D$ have order $k$ if its type is $D_{15}$ or $D_{24}$,
and $[k/2]$ if its type is $D_{33}$.
\end{thm}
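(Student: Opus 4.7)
The plan is to induct on the number of diagonal arcs in the net of the digon $D$. The base case, when $D$ has no diagonal arc, is precisely the case of an irreducible digon classified by Lemma~\ref{digon-irreducible}: then $k=1$, and inspection of Figs.~\ref{D15}--\ref{D33} confirms the stated orders ($1=k$ for $D_{15}$ and $D_{24}$; $0=[1/2]$ for $D_{33}$).

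For the inductive step, suppose $D$ contains a diagonal arc $\gamma$. A digon has only two corners $p$ and $q$, so $\gamma$ necessarily joins $p$ to $q$ and cuts $D$ into two sub-digons $D_1$ and $D_2$, each still having $p$ and $q$ as corners and with $\gamma$ as one of their sides. Each $D_i$ has strictly fewer diagonal arcs than $D$ (the former diagonal $\gamma$ has become a side), so by the induction hypothesis each $D_i$ is a union of $k_i$ irreducible digons of a single type $T_i$, with the asserted orders at $p$ and $q$. It remains to prove three compatibilities across $\gamma$: $T_1=T_2$, the two irreducible pieces $\Delta_i\subset D_i$ adjacent to $\gamma$ obey the stated sort rule, and the orders at $p$ and $q$ combine as claimed.

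The compatibility is a local analysis near $p$. The arc $\gamma$ and the two sides of $\Delta_i$ meeting at $p$ map to prescribed circles of $\P$ determined by the type and sort of $\Delta_i$, and the adjacency rule of $\P$ (a triangular face and a quadrilateral face of the net always alternate across each edge) then uniquely determines the local picture on each side of $\gamma$. A finite case check against Figs.~\ref{D15}--\ref{D33} verifies that the only consistent pairings are: $D^a_{15}$ glued to $D^b_{15}$ (two same-sort pieces would produce two adjacent triangular faces of $\Gamma$, violating the rule); $D^a_{24}$ with $D^a_{24}$ or $D^b_{24}$ with $D^b_{24}$; and $D^a_{33}$ with $D^b_{33}$. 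In particular $T_1=T_2$, and iterating the induction all irreducible pieces throughout $D$ share a single type.

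The order computation at $p$ (symmetrically at $q$) then follows by summing contributions. Each irreducible $D_{15}$ or $D_{24}$ piece contributes an angle $\pi$ at $p$, so $k$ such pieces give total angle $k\pi$ and order $k$. For type $D_{33}$, adjacent pieces of opposite sort contribute the supplementary angles $\pi\beta$ and $\pi(1-\beta)$, where $\pi\beta\in(0,\pi)$ is the angle between the two circles carrying the sides; summing $k$ alternating terms produces an angle whose integer part equals $[k/2]$. The main obstacle I foresee is the compatibility enumeration in the third paragraph: one must carefully track which circle of $\P$ the shared arc $\gamma$ maps to and rule out spurious pairings using the global constraint that $D_1$ and $D_2$ embed into the \emph{same} partition $\P$ of $\S$. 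Beyond this local-to-global check, the inductive structure and the angle arithmetic are routine.
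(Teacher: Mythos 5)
The paper does not actually supply a proof of Theorem~\ref{digon}: it states only that ``the proof is an easy exercise,'' so there is nothing to compare your argument against line by line; I can only assess it on its own terms. Your inductive scheme is sound: a reducible digon has a diagonal arc, which in a digon necessarily runs from one corner to the other and splits it into two sub-digons, each with strictly fewer diagonal arcs (the cut arc becomes a side, and no new diagonal arcs are created), so the induction is well-founded and reduces everything to the base case handled by Lemma~\ref{digon-irreducible} plus a local compatibility check at the interface. Two remarks. First, the type-matching $T_1=T_2$ does not require the face-adjacency analysis at all: the common side $\gamma$ has a single well-defined order, and since $D_{15}$, $D_{24}$, $D_{33}$ have sides of orders $\{1,5\}$, $\{2,4\}$, $\{3,3\}$ respectively, only equal types can share a side. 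Second, the sort-compatibility rules are the real content, and you assert them as the outcome of a ``finite case check'' while only sketching the $D_{15}$ case; to close this you should note that crossing $\gamma$ the developing map passes from one disk bounded by the relevant circle $C$ (or, for $D_{33}$, by the pair $C,C'$) to the complementary one, so the faces of $\P$ adjacent to a given edge of $\gamma$ from the two sides are one triangle and one quadrilateral — this single observation, applied to the edge(s) of the common side, forces the opposite sort for $D_{15}$ and $D_{33}$ (whose sorts are distinguished by the face type adjacent to a side) and the same sort for $D_{24}$ (whose two sorts are mirror images of each other). Your order arithmetic is correct, including the slightly delicate $D_{33}$ case where alternating complementary angles $\beta$ and $1-\beta$ sum to $m+\beta$ for $k=2m+1$ and to the integer $m$ for $k=2m$, giving order $[k/2]$ in both cases.
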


The proof is an easy exercise. Sometimes it is convenient to allow $k=0$ in Theorem \ref{digon}, to denote an empty digon.

\begin{rmk} \normalfont Note that a generic quadrilateral $Q$ cannot contain a digon $D$ of type $D_{33}$ having both corners at the corners of $Q$, since the same two circles intersect at both corners of $D$.
\end{rmk}
\medskip

\subsection{Spherical triangles}\label{sub:triangles}
A spherical triangle has its sides on at most three circles, and classification of such triangles goes back to Klein
(see also \cite{EGT3}, Section 6).
Since we need spherical triangles as building blocks of generic spherical quadrilaterals, we consider only spherical triangles with all sides mapped to some circles of the partition $\P$ and all corners mapped to intersection points of the circles of $\P$.
All irreducible spherical triangles are primitive, and can be classified as follows.
Triangle $T_n$ (see Fig.~\ref{triangles-te}a) has an integer corner of order $n$ and two non-integer corners
of order $0$. The angles at its non-integer corners are equal when $n$ is odd and complementary (adding up to $1$)
when $n$ is even. Triangle $E_n$ (see Fig.~\ref{triangles-te}b) has a non-integer corner of order $n$ and two non-integer corners of order $0$.

\begin{rmk}\label{t1-net}\normalfont
Fig.~\ref{triangles-te} does not show preimages of the fourth circle of $\P$ (which does not pass through any corners of a triangle). We'll need these preimages to understand the nets of generic quadrilaterals obtained by attaching triangles to basic quadrilaterals. In particular, the triangle $T_1$ geometrically is a digon $D_{33}$ (see Fig.~\ref{D33}) with one of the side vertices of the net of $D_{33}$ being the integer corner of $T_1$.
Figs.~\ref{triangles-t1} and \ref{triangles-e1} show possible nets for triangles $T_1$ and $E_1$, respectively, with the preimages of all four circles of $\P$ included. Note that the net of $T_n$ always has an edge connecting its integer corner with its base (the side opposite to its integer corner). This property will be important for understanding the chains of quadrilaterals later in this paper.
\end{rmk}

\begin{figure}
\centering
\includegraphics[width=4.8in]{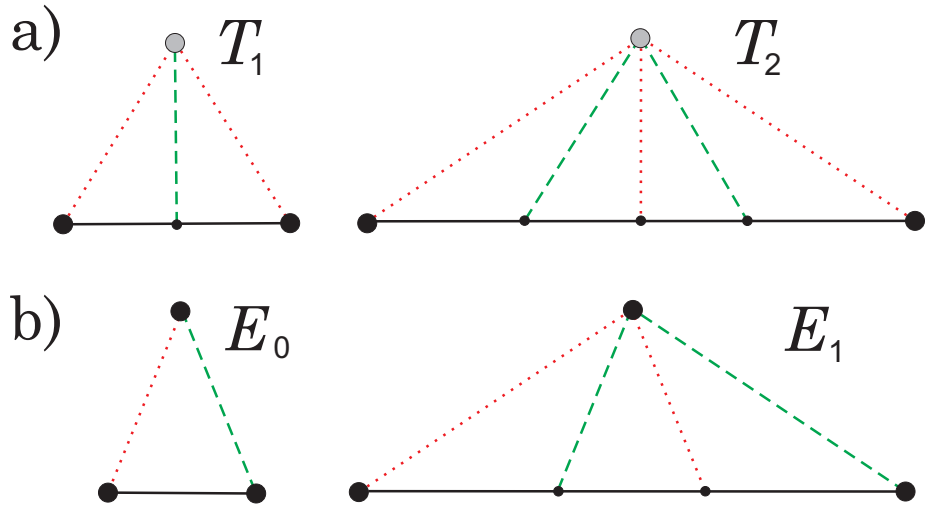}
\caption{Irreducible triangles $T_n$ and $E_n$.}\label{triangles-te}
\end{figure}

\begin{figure}
\centering
\includegraphics[width=4in]{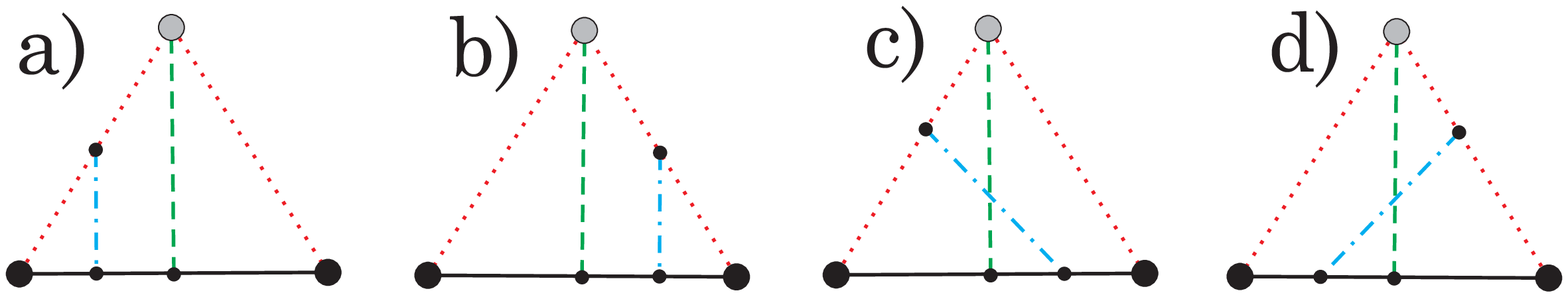}
\caption{Nets for a triangle $T_1$.}\label{triangles-t1}
\end{figure}

\begin{figure}
\centering
\includegraphics[width=4in]{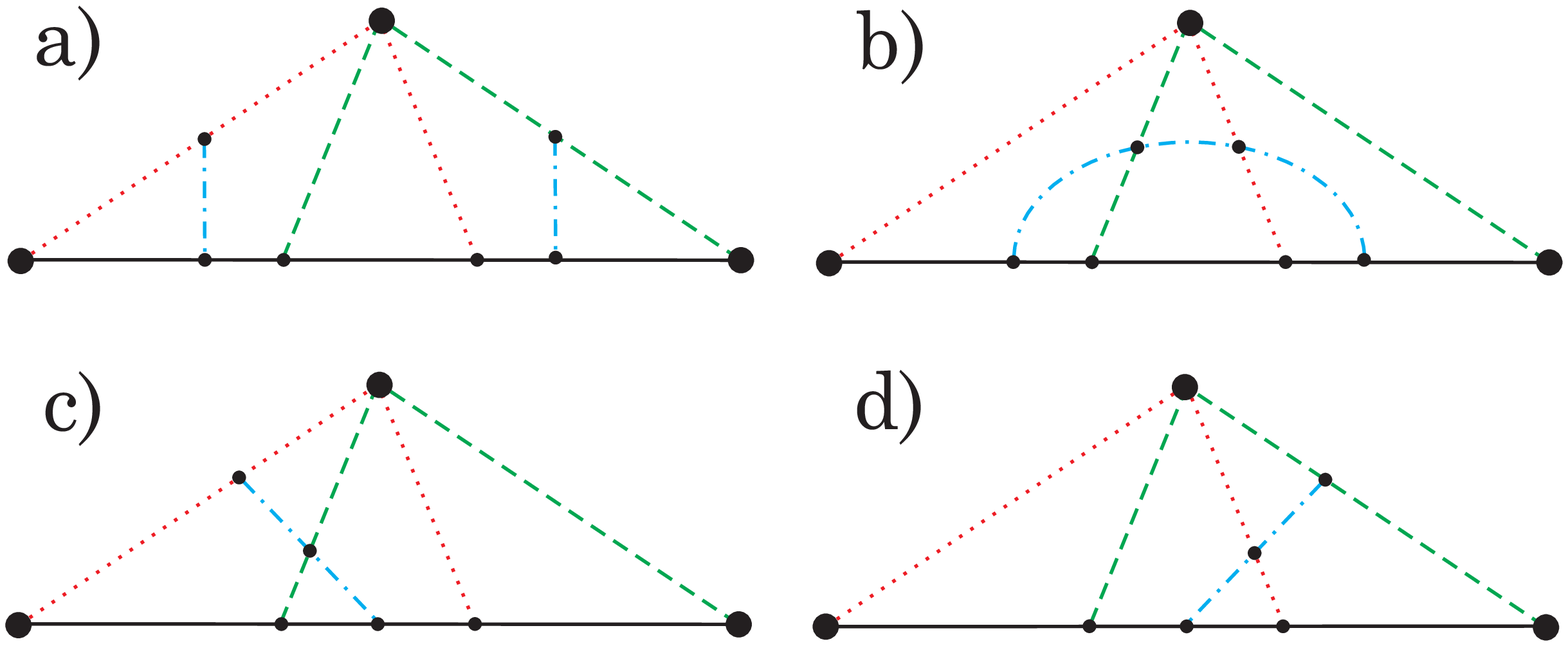}
\caption{Nets for a triangle $E_1$.}\label{triangles-e1}
\end{figure}

\subsection{Interior arcs of generic spherical quadrilaterals}\label{sub:arcs}
An arc $\gamma$ of a spherical quadrilateral $Q$ was defined (see Definition \ref{arcs}) as a simple path in the preimage $\Gamma_C$ in $Q$
of a circle $C$ of the partition $\P$, with the corners of $Q$ allowed only at the ends of $\gamma$. An arc is interior if it does not belong to a side of $Q$.
It follows from Theorem~\ref{diagonal} below (see Remark~\ref{rmk-arcs}) that an interior arc of a generic quadrilateral $Q$ cannot have both ends at the lateral vertices on the opposite sides of $Q$.

\begin{df}\label{df-arcs}{\rm
An interior arc $\gamma$ of a spherical quadrilateral $Q$ is {\sl one-sided} if both ends of $\gamma$ are on the same side of $Q$, and at least one of them is not a corner of $Q$.
If the ends of $\gamma$ are lateral vertices on two adjacent sides of $Q$, it is {\sl two-sided}.
If one end of $\gamma$ is a corner $p$ of $Q$, and another end
is a lateral vertex on the side of $Q$ not adjacent to $p$, it is a {\sl separator}.
A separator arc partitions $Q$ into a quadrilateral and a triangle.}
\end{df}

\begin{lemma}\label{nodiagonals}
The net of a generic quadrilateral $Q$ does not contain an arc with the ends at two opposite corners of $Q$.
\end{lemma}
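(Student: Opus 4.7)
The plan is to obtain an immediate contradiction from the hypothesis of genericity. Suppose, toward a contradiction, that the net of $Q$ contains an arc $\gamma$ with its ends at two opposite corners, say $a_0$ and $a_2$. By Definition \ref{arcs} the arc $\gamma$ lies in the preimage $\Gamma_C$ of some circle $C$ of the partition $\P$, so the developing map $\Phi$ sends both $a_0$ and $a_2$ onto points of $C$.

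The main step is to pin down, for each corner, which circles of $\P$ may contain its image. Write $C_{i,i+1}$ for the circle of $\P$ carrying the image of the side $[a_i,a_{i+1}]$; under the genericity assumption the four circles $C_{01}, C_{12}, C_{23}, C_{30}$ are pairwise distinct. Since every angle of a generic quadrilateral is non-integer, the two sides incident to a corner $a_j$ are mapped to two distinct circles, and $\Phi(a_j)$ lies at their intersection. The absence of triple intersections in $\P$ then forces these two circles to be the only ones of $\P$ passing through $\Phi(a_j)$.

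Applying this observation to the endpoints of $\gamma$, the circle $C$ must simultaneously lie in $\{C_{30}, C_{01}\}$ (to pass through $\Phi(a_0)$) and in $\{C_{12}, C_{23}\}$ (to pass through $\Phi(a_2)$). These two sets are disjoint because the four circles of $\P$ are distinct, contradicting the existence of such a $C$ and completing the argument. I do not anticipate a genuine obstacle here: the conclusion is essentially a structural consequence of the definition of a generic quadrilateral, and no combinatorial analysis of the net itself is required.
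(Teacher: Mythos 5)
Your proof is correct and follows essentially the same route as the paper: the paper likewise observes that each circle $C$ of $\P$ contains the images of only two adjacent corners (so at most one of any opposite pair), hence no arc in the preimage of a single circle can join opposite corners. Your version merely makes explicit the role of the no-triple-intersection hypothesis in ruling out extra circles through a corner's image, which the paper leaves implicit.
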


\begin{proof}
Let $\gamma$ be an arc of the net of $Q$, and let $C$ be a circle of $\P$ such that $\gamma$ belongs
 to the preimage of $C$. Since the sides of $Q$ map to four distinct circles, only one of two opposite
 corners of $Q$ belongs to the preimage of $C$. Hence $\gamma$ cannot have the ends at two opposite corners of $Q$.
 \end{proof}

The opposite is also true: if a spherical quadrilateral $Q$ with all non-integer corners has two opposite sides mapped to the same circle then the net of $Q$, defined as the preimage of the partition $\TT$
of the Riemann sphere $\S$ by the three great circles to which the sides of $Q$ are mapped, contains an interior arc with the ends at two opposite corners of $Q$. This follows from a more general statement about spherical polygons over a three-circle partition of the sphere
(see \cite{EG2}, Theorem 2.2).

\begin{thm}\label{diagonal} Let $\TT$ be a partition of the Riemann sphere $\S$ by three distinct great circles.
Let $Q$ be a spherical $n$-gon having each side mapped to one of the circles of $\TT$, and all corners mapped to vertices of $\TT$.
If $n>3$ then the net of $Q$, defined as the preimage of the partition $\TT$, contains an interior arc with the ends at two non-adjacent corners of $Q$.
\end{thm}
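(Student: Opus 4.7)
My plan is to pigeonhole the sides of $Q$ among the three circles and then analyze the preimage of a single circle in $Q$. Since $Q$ has $n>3$ sides labeled by only three circles, some circle $C\in\TT$ is the image of at least two distinct sides $L,L'$ of $Q$, and I would first reduce to the case where $L,L'$ can be chosen non-adjacent on $\partial Q$. If that reduction fails, every pair of same-colored sides is adjacent, and the corner between such a pair has integer angle (its two adjacent sides both map to $C$). In that subcase the unique interior $C$-edge at that integer corner cuts off a triangle of the net, producing a polygon with $n-1\ge 4$ corners to which the theorem applies by induction on $n$.

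Assuming $L,L'$ are non-adjacent, I would analyze $\Gamma_C$ via the $2$-coloring of $Q\setminus\Gamma_C$ inherited from the two hemispheres of $\S\setminus C$: any two regions sharing a $C$-edge receive opposite colors. The four endpoints of $L$ and $L'$ are corners of $Q$ lying on $C$, and at each such corner $\Gamma_C$ has at least one edge leading into the interior of $Q$. I would trace the maximal interior $C$-arcs starting at these corners; using the $2$-coloring together with a Jordan-curve argument inside the disk $Q$, one should be able to force at least one traced arc to cross from the neighborhood of $L$ to that of $L'$, terminating at a corner endpoint of $L'$ and so producing an interior arc of the net with ends at two non-adjacent corners.

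The main obstacle is to guarantee that the traced arc terminates at a corner (rather than at a lateral vertex on some non-$C$ side), and that the resulting pair of corners is \emph{non-adjacent}. To handle this I would invoke an Euler-characteristic computation on the triangulated disk $Q$: every face of $\TT$ is triangular, hence every face of the net $\Gamma$ is triangular, so the identity $V-E+F=1$ combined with the local-degree constraints for $\Gamma_C$ at corners, lateral vertices and interior vertices, compared against the number of lateral crossings on non-$C$ sides, should yield an imbalance whenever $n>3$ that forces a surplus of interior arcs with both endpoints at corners. Ruling out the adjacent case uses the observation that an interior arc connecting two adjacent corners $a_j,a_{j+1}$, together with the side $[a_j,a_{j+1}]$, bounds a region of $Q$ mapping homeomorphically onto a face of $\TT$; this pins down the color of $[a_j,a_{j+1}]$ and should let one iterate the argument until a genuinely non-adjacent pair of corners is reached.
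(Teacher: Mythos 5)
The paper does not actually prove Theorem~\ref{diagonal}: it is imported from \cite{EG2}, Theorem~2.2 (as the paragraph preceding the statement says), so there is no in-paper argument to compare yours against, and I can only judge the proposal on its own terms. As written it has several concrete gaps, the most serious being that your reduction-plus-induction collapses exactly in the case the paper needs, $n=4$. If the sides are coloured $C,C,C',C''$ with the two $C$-sides adjacent, your fallback produces a polygon with $n-1=3$ corners, to which the theorem does not apply, so the induction has no base. The fallback step itself is also wrong: at a corner of integer angle $\pi$ there is \emph{no} interior $C$-edge, at angle $k\pi$ with $k\ge 2$ there are $k-1$ of them, and in no case is it automatic that the maximal $C$-arc issuing from that corner cuts off a triangle --- it can terminate at a lateral vertex anywhere on the boundary, cutting $Q$ into two polygons of arbitrary shape. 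Even granting that a triangle is cut off, the diagonal supplied by the inductive hypothesis may have an end at the newly created corner (which is only a lateral vertex of $Q$), or may join corners that are adjacent in $Q$, so the conclusion does not transfer.

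The main branch is likewise only a plan. The step that carries the whole difficulty --- forcing a traced $C$-arc to terminate at a corner rather than at a lateral vertex of a non-$C$ side --- is delegated to an Euler-characteristic ``imbalance'' that is asserted but never set up. If one carries the computation out (all faces triangles, interior vertices of degree $4$, lateral vertices of degree $3$), Euler's formula yields only the single relation $\sum_j(d_j-4)=2V_i+V_\ell-6$, where $d_j$ are the corner degrees and $V_i,V_\ell$ count interior and lateral vertices; it is not clear how a surplus of corner-to-corner arcs is to be extracted from this. Note also that two non-adjacent sides on the same circle can still share a pair of \emph{adjacent} endpoints (e.g.\ $[a_0,a_1]$ and $[a_2,a_3]$ give the adjacent pair $a_1,a_2$), so even a successful trace does not finish the proof. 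Finally, the clean-up of the adjacent case is false as stated: the region bounded by a side $[a_j,a_{j+1}]$ and an interior arc with the same endpoints is a (possibly multi-sheeted) digon lying over a lune of two circles, or over a disk bounded by one circle when the two corners are integer, and in neither case can it map homeomorphically onto a triangular face of $\TT$. The overall strategy --- pigeonhole the sides among the three circles and analyse maximal interior arcs, in the spirit of Lemmas~\ref{two-sided}--\ref{one-sided} --- is reasonable, but none of the three critical steps is actually carried out.
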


\begin{rmk}\label{notwocircles} \normalfont
Note that a quadrilateral $Q$ with all non-integer corners cannot have all sides mapped to
only two circles. Due to Theorem~\ref{diagonal} such a quadrilateral would be a union of two spherical triangles with all corners at the corners of $Q$. Each of these triangles would have an integer corner, but a spherical triangle with an integer corner $p$ cannot have $p$ at the intersection of the two circles to which its sides
are mapped.
\end{rmk}

\begin{rmk}\label{rmk-arcs}\normalfont
Theorem~\ref{diagonal} implies that a generic spherical quadrilateral $Q$ cannot have an interior arc $\gamma$ with the ends at lateral vertices on opposite sides of $Q$. Such an arc would partition $Q$ into two quadrilaterals, one of them having all sides mapped to three circles (since one of the sides of $Q$ maps to the same circle of $\P$ as $\gamma$). According to Theorem~\ref{diagonal}, such a quadrilateral has an interior arc with the ends at its opposite corners. This is impossible since both corners adjacent to its side $\gamma$ have order $0$.
\end{rmk}

\begin{figure}
\centering
\includegraphics[width=4.8in]{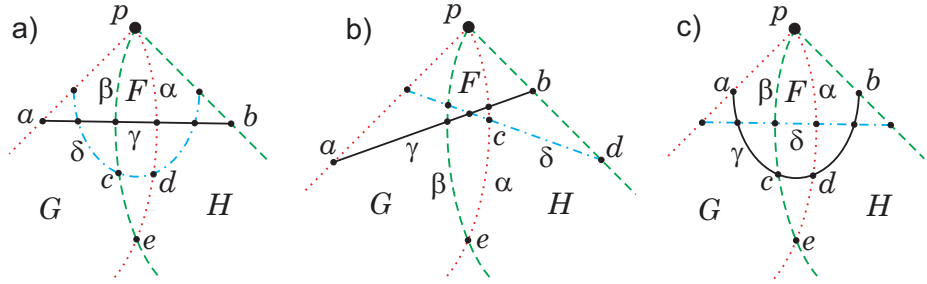}
\caption{Illustration for the proof of Lemma \ref{two-sided}.}\label{fig:twosided}
\end{figure}

\begin{figure}
\centering
\includegraphics[width=4in]{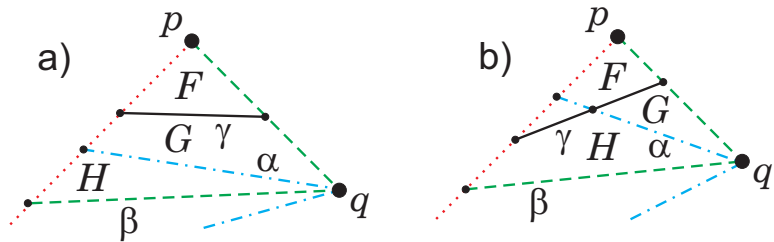}
\caption{Illustration for the proof of Lemma \ref{two-sided1}.}\label{fig:twosided1}
\end{figure}

\begin{lemma}\label{two-sided}. Let $\gamma$
be a two-sided arc of a generic irreducible quadrilateral $Q$ with
the ends $a$ and $b$ on the sides of $Q$ adjacent to its corner $p$.
Then $p$ has order $0$.
\end{lemma}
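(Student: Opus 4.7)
The plan is to study the triangular region $\Delta_p\subset Q$ bounded by $\gamma$ together with the subarcs $[p,a]$ and $[p,b]$ of the two sides of $Q$ meeting at $p$. First I would check that the circle $C$ of $\P$ carrying $\gamma$ is distinct from the circles $C_1,C_2$ carrying the two sides at $p$: otherwise $\gamma$ and a side of $Q$ would share an endpoint at a self-intersection of $C$, which is impossible. Hence $\Delta_p$ is a spherical triangle whose three sides lie on three distinct circles $C_1,C_2,C$ of $\P$, with corners $p,a,b$.

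Next I would identify the angles of $\Delta_p$. The corners $a$ and $b$ are lateral vertices of the net of $Q$, so they map to simple intersection points of $\P$; consequently the interior angles of $\Delta_p$ at $a$ and $b$ are angles of the partition $\P$ and thus of order $0$. The angle at $p$ coincides with the angle of $Q$ at $p$ (no additional edges of the net leave $p$ inside $\Delta_p$), and it is non-integer because $Q$ is generic. Applying Klein's classification of spherical triangles over a three-circle partition of $\S$ (as recalled in Section~\ref{sub:triangles}) to $\Delta_p$, a $T_n$-piece at $p$ is excluded by the non-integrality of the angle at $p$, while a reducible $\Delta_p$ would furnish an interior arc of $\Delta_p$ between two of its corners which, together with a further subdivision, reduces the problem to a strictly smaller corner triangle of the same form. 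By induction on the number of faces of $\Delta_p$ one may therefore assume $\Delta_p$ irreducible, and hence of type $E_n$ with $n=\mathrm{ord}(p)$.

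It remains to show $n=0$. Assume for contradiction $n\ge 1$. Then inspection of the net of $E_n$ shows that each of the sides $[p,a]$ and $[p,b]$ carries at least one interior vertex of the net of $Q$, and at least one arc on the fourth circle of $\P$ is forced to enter $\Delta_p$ from such a vertex; by the classification of $E_n$'s net this arc cannot close up inside $\Delta_p$, so it must cross $\gamma$ and continue into $Q\setminus\Delta_p$. A case analysis (illustrated in Figure~\ref{fig:twosided}) on the routing of the fourth circle of $\P$ through $\Delta_p$ and on the exit point across $\gamma$ then traces this forced arc through the net of $Q\setminus\Delta_p$ until it necessarily reaches a corner of $Q$ distinct from $p$, producing a diagonal arc of $Q$ and contradicting irreducibility. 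Thus $n=0$, and $p$ has order $0$ as claimed.

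I expect the main obstacle to be the final case analysis: one has to enumerate the finitely many ways in which the fourth circle of $\P$ can cut across $\Delta_p$ and verify in every case, using the adjacency rules of the net (triangle faces meet quadrilateral faces) and the absence of triple intersections in $\P$, that the forced arc genuinely terminates at a corner of $Q$ rather than closing up as a loop or returning to a lateral vertex.
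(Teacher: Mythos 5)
Your setup---cutting off the corner triangle $\Delta_p$ bounded by $\gamma$ and the two sides at $p$, observing that its angle at $p$ is the (non-integer) angle of $Q$, and identifying it as a triangle of type $E_n$ with $n$ the order of $p$---is essentially the paper's starting point. But the contradiction you propose to extract from $n\ge 1$ does not work, and it is not the one the paper uses. You trace an arc of the fourth circle from a lateral vertex on $[p,a]$ or $[p,b]$ out across $\gamma$ and claim it must terminate at a corner of $Q$, ``producing a diagonal arc of $Q$ and contradicting irreducibility.'' An interior arc with one end at a lateral vertex and the other at a corner is a \emph{separator} arc, not a diagonal arc: irreducibility only forbids interior arcs with \emph{both} ends at distinct corners (Definition~\ref{arcs}), and separator arcs are present in virtually every irreducible quadrilateral in the paper's list ($X_{kl}$, $Z_{kl}$, etc.). So even if your tracing argument succeeded in every case, it would yield no contradiction. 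Moreover, the case analysis that is supposed to carry the whole proof is deferred rather than carried out, and you have also focused on the wrong circle: the decisive arcs are not on the fourth circle but are the two interior arcs $\alpha,\beta$ emanating from $p$ on the \emph{same} circles as the two sides $ap$ and $bp$ --- these exist precisely because $p$ has order $\ge 1$, and they are what the hypothesis $n\ge1$ actually forces.

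The paper's contradiction is local, not global. Following $\alpha$, $\beta$ and the auxiliary fourth-circle arc $\delta$ across $\gamma$ into $Q\setminus\Delta_p$, and using that none of the intermediate vertices can be corners of $Q$ (each is joined to $p$ by an interior arc, so irreducibility forces them to be interior vertices), one finds that the net must contain a face $G$ whose boundary includes both a segment of the side of $Q$ extending $ap$ beyond $a$ and a segment of the interior arc $\alpha$ --- two disjoint boundary pieces mapped to the same circle of $\P$. No face of the net can have this, since each face maps homeomorphically onto a face of $\P$, whose boundary meets each circle in a single edge. That is the step your outline is missing, and without it (or an equivalent), the claim that $n\ge1$ is impossible is not established.
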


\begin{proof} We prove the statement by contradiction, assuming that the corner $p$ has order $1$ (the case of a corner with order greater than $1$ can be treated similarly).
Then $Q$ contains an irreducible triangle $E_1$ (see Figs.~\ref{triangles-te}b and \ref{triangles-e1}) with the vertices $p,\,a,\,b$, bounded by $\gamma$ and the two sides of $Q$ adjacent to $p$.
There are two interior arcs of $Q$, say $\alpha$ and $\beta$, both with one end at $p$, such that $\alpha$ is mapped to the same circle of $\P$ as the side $ap$ of $E_1$, and $\beta$ is mapped to
the same circle as its side $bp$. Fig.~\ref{triangles-e1} shows four possibilities for the net of $E_1$.
We consider two cases (see Figs.~\ref{fig:twosided}a and \ref{fig:twosided}b) corresponding to the nets in Figs.~\ref{triangles-e1}a and \ref{triangles-e1}c. Note that the net in Fig.~\ref{triangles-e1}d is reflection symmetric to that in Fig.~\ref{triangles-e1}c, and the net in Fig.~\ref{triangles-e1}b is obtained from Fig.~\ref{fig:twosided}a by exchanging the arcs $\gamma$ and $\delta$ (see Fig.~\ref{fig:twosided}c).

Let $F$ be the face of the net $\Gamma$ of $Q$ adjacent to $p$ and bounded by the arcs $\alpha$ and $\beta$.
In Fig.~\ref{fig:twosided}a the face $F$ is a triangle and the faces of $\Gamma$ adjacent to $F$ are quadrilaterals,
so there should be an interior arc $\delta$ at the boundary of each of these faces, mapped to a circle of $\P$ other than those for $\alpha$, $\beta$ and $\gamma$. Note that vertices $c$ and $d$ of $\delta$ are interior vertices of $\Gamma$: they cannot be corners of $Q$ since they are connected to $p$ by interior arcs.
For the same reason the intersection point $e$ of $\alpha$ and $\beta$ should be an interior vertex of $\Gamma$.
Thus $\Gamma$ should contain a face $G$ bounded by the side of $Q$ extending $ap$ and segments of $\gamma$, $\delta$, $\beta$, $\alpha$, and a face $H$ bounded by the side of $Q$ extending $bp$ and segments of $\gamma$, $\delta$, $\alpha$, $\beta$ (see Fig.~\ref{fig:twosided}a). This is a contradiction: a side of $Q$ and an interior
arc mapped to the same circle cannot belong to the boundary of the same face of $\Gamma$. The same arguments apply to the case shown in Fig.~\ref{fig:twosided}c
where the arcs $\gamma$ and $\delta$ are exchanged. Note that at least one end of the arc $\delta$ in Fig.~\ref{fig:twosided}c is not a corner of $Q$, since $Q$ is irreducible.

In Fig.~\ref{fig:twosided}b the face $F$ is a quadrilateral. The same arguments as before show that $\Gamma$ should contain a face $G$ bounded by the side of $Q$ extending $ap$ and segments of $\gamma$, $\delta$, $\beta$ and $\alpha$. This is a contradiction: a side of $Q$ and an interior
arc $\alpha$ mapped to the same circle cannot belong to the boundary of the same face $G$.
\end{proof}

\begin{lemma}\label{two-sided1}. Let $\gamma$ be a two-sided arc of a generic irreducible quadrilateral $Q$, with the ends on the sides of $Q$ adjacent to its corner $p$.
Then one of the sides of $Q$ adjacent to $p$ has another end at a corner $q$ of $Q$ of order at least 1.
The net $\Gamma$ of $Q$ has a face $H$ adjacent to $q$, such that the boundary of $H$ contains segments of two separator arcs with a common end at $q$ .
\end{lemma}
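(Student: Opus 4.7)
Plan.

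By Lemma~\ref{two-sided} the corner $p$ has order $0$, so in $\Gamma$ it carries only the two boundary edges on $L_a$ and $L_b$. Let $C_a, C_b$ be the circles of $\P$ carrying $L_a, L_b$, and $C$ the circle carrying $\gamma$; write $a\in L_a$, $b\in L_b$ for the endpoints of $\gamma$.

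The first step is to show that the triangle $T\subset Q$ bounded by $\gamma$ and the two segments of $L_a, L_b$ from $p$ to $a, b$ is a single triangular face of $\Gamma$. Each of its corners $p, a, b$ has degree $2$ in $T$: $p$ by order $0$, and $a, b$ because a lateral vertex of $Q$ has exactly one interior edge (the edge of $\gamma$). Since the three sides of $T$ lie on three distinct circles, $T$ has no integer corner, so it must be of type $E_n$; the degree condition forces $n = 0$, i.e.\ $T = E_0$. By the adjacency rule the face $Q_1$ across $\gamma$ is then a quadrilateral, and its four edges lie on the four distinct circles of $\P$: $\gamma$ on $C$, boundary edges $ca \subset L_a$ on $C_a$ and $bd \subset L_b$ on $C_b$, and an interior edge $cd$ on the remaining fourth circle $C'$.

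Next I would force exactly one of $c, d$ to be a corner of $Q$ and identify it. If both $c, d$ were lateral, then $T \cup Q_1$ would itself be a spherical triangle with corners $p, c, d$ each of degree $2$ and sides on the three distinct circles $C_a, C_b, C'$; the same argument as for $T$ makes this a single face $E_0$, contradicting the presence of the interior edge $\gamma$. On the other hand, the corners of $Q$ on $C_a$ are $p \in C_a \cap C_b$ and $q_a$, with $q_a$ on the circle $C_b^*$ of the side opposite $L_b$; symmetrically for $C_b$, with $q_b$ on the circle $C_a^*$ of the side opposite $L_a$. Since $c \in C_a \cap C'$ and $d \in C_b \cap C'$, the vertex $c$ is a corner of $Q$ only when $C' = C_b^*$ (making $c = q_a$), and $d$ is a corner only when $C' = C_a^*$ (making $d = q_b$); these two cases for $C'$ are exclusive, so exactly one occurs. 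Either way the resulting corner $q \in \{q_a, q_b\}$ is adjacent to $p$ and carries the interior edge $cd$, so $\mathrm{order}(q) \ge 1$.

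To finish I would exhibit the face $H$. Take without loss of generality $q = q_b = d$. Across $dc$ from $Q_1$ is a triangular face $T_1$, with third vertex $v$. The two circles at $c$ are $C_a$ and $C'$, so the edge $cv$ of $T_1$ lies on $C_a$, hence on $L_a$, and $v$ is the next vertex of $L_a$ past $c$ going toward $q_a$; the two circles at $d = q_b$ are $C_b$ and $C'$, so $dv$ is interior on $C_b$, and $v \in C_a \cap C_b$. Irreducibility of $Q$ and Lemma~\ref{nodiagonals} exclude $v$ from being a corner: $v = p$ would make $dv$ a diagonal arc between the adjacent corners $p, q_b$, while $v = q_a$ would make it a diagonal arc between the opposite corners $q_a, q_b$. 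Hence $v$ is a lateral vertex on $L_a$, and both arcs $dc$ and $dv$ are separators with common end at $q = d$ running to lateral vertices of the non-adjacent side $L_a$. The face $H := T_1$, bounded by segments of these two separator arcs together with the boundary edge $cv \subset L_a$, satisfies the claim. The main obstacle is the circle bookkeeping near $q$: no-triple-intersection genericity forces each interior edge at $q$ onto a specific one of the four circles of $\P$, and one must combine this with Lemma~\ref{nodiagonals} and irreducibility of $Q$ to rule out the diagonal configurations for $v$ that would otherwise let it be a corner.
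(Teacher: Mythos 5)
Your first step---that the region $T$ between $p$ and $\gamma$ is a single triangular face of $\Gamma$---is where the argument breaks, and everything after it is built on that claim. Showing that $p$, $a$, $b$ each have degree $2$ in $T$ does not make $T$ a single face: you must also rule out lateral vertices of $\Gamma$ on the \emph{open} segments $(p,a)\subset L_a$ and $(p,b)\subset L_b$, and interior vertices of $\Gamma$ lying on $\gamma$ itself, since any such vertex sends an edge into $T$. Equivalently, even granting that $T$ is a triangle of type $E_0$ for the three circles $C_a$, $C_b$, $C$, the fourth circle of $\P$ may cross its image, in which case its preimage is an interior arc of $T$ and $T$ is not a face of $\Gamma$. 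This is not a hypothetical: the paper's proof splits into exactly two cases according to whether the unique face $F$ of $\Gamma$ adjacent to $p$ is a triangle or a quadrilateral, and in the quadrilateral case $\gamma$ necessarily passes through an interior vertex $v$ of $\Gamma$ (one boundary edge of $F$ belongs to $\gamma$), the fourth circle enters the region between $p$ and $\gamma$ through $v$, and $T$ consists of at least two faces. Your proof silently assumes this case away, so the lemma is only proved for the triangular-face configuration; the subsequent identification of the quadrilateral face $Q_1$ across $\gamma$, of the corner $q$, and of the face $H$ does not apply when $\gamma$ has an interior vertex.

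Within the case you do treat (where $F$ is a triangle and $\gamma$ is a single edge), your route is close in spirit to the paper's: the paper also looks at the quadrilateral face across $\gamma$ and shows its far boundary arc $\alpha$ cannot be two-sided, hence has exactly one end at a corner $q$ of positive order, with $H$ the triangular face at $q$ bounded by two separator arcs. Your circle-bookkeeping argument for why exactly one of $c$, $d$ is a corner is a reasonable substitute for the paper's appeal to the argument of Lemma~\ref{two-sided}, and the exclusion of $v$ being a corner via Lemma~\ref{nodiagonals} and irreducibility is fine. To repair the proof you would need to add the quadrilateral-face case: there the relevant arc $\alpha$ is the one on the fourth circle through $v$, one shows it cannot be two-sided because otherwise a face on its far side would have boundary pieces on both sides of $Q$ adjacent to $p$ meeting at a vertex, which is impossible since those sides meet only at $p$; its corner end is the required $q$.
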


\begin{proof} According to Lemma \ref{two-sided}, the corner $p$ of $Q$ has order $0$,
thus there is a unique face $F$ of $\Gamma$ adjacent to $p$, which may be either triangular or quadrilateral.

If $F$ is a triangle (see Fig.~\ref{fig:twosided1}a) then $\gamma$ is a side of $F$. Let $G$ be the quadrilateral face of $\Gamma$ adjacent to $\gamma$ from the other side. Then the boundary $\alpha$ of $G$ opposite to $\gamma$ has both ends on the sides of $Q$ adjacent to $p$. The same arguments as in the proof of Lemma \ref{two-sided} show that $\alpha$ cannot be a two-sided arc, thus $\alpha$ has one end at a corner $q$ of $Q$.
Since $Q$ is irreducible, the other end of $\alpha$ cannot be a corner of $Q$, thus $\alpha$ is a separator arc.
Note that $q$ cannot have order $0$, otherwise $\alpha$ would be a side of $Q$, and $Q$ would be a triangle.
Thus the order of $q$ is at least $1$, and $\Gamma$ contains a triangular face $H$ adjacent to $q$ and bounded
by a side of $Q$ and two separator arcs $\alpha$ and $\beta$.

If $F$ is a quadrilateral (see Fig.~\ref{fig:twosided1}b) then one of its boundary edges belongs to $\gamma$,
and the triangular face $G$ of $\Gamma$ on the other side of $\gamma$ is bounded by a side of $Q$ and the edges
of arcs $\gamma$ and $\alpha$. Note that $\alpha$ cannot be a two-sided arc of $\Gamma$. If it were a two-sided arc,
the quadrilateral face $H$ of $\Gamma$ on the other side of $\alpha$ would have two boundary arcs (other than $\gamma$ and $\alpha$) belonging to two sides of $Q$ adjacent to $p$, intersecting at a vertex of $H$.
This is impossible, since the sides of $Q$ adjacent to $p$ do not intersect at any other point.
Thus the end $q$ of $\alpha$ is a corner of $Q$, and both arcs $\alpha$ and $\beta$ at the boundary of $H$ are separator arcs.
\end{proof}

\begin{figure}
\centering
\includegraphics[width=4.8in]{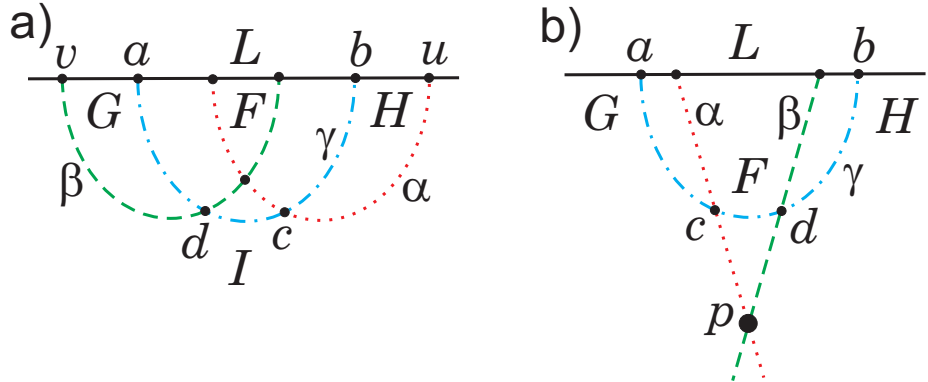}
\caption{Illustration for the proof of Lemma \ref{one-sided}.}\label{fig:onesided}
\end{figure}

\begin{lemma}\label{one-sided}
Let $\gamma$ be a one-sided arc of a generic primitive quadrilateral $Q$,
with the ends $a$ and $b$ on a side $L$ of $Q$.
Then there are two separator arcs of the net $\Gamma$ of $Q$
with a common end at a corner $p$ of $Q$ and the other ends
inside the segment $(a,b)$ of $L$.
If both $a$ and $b$ are not corners of $Q$ then the order of $p$ is greater than 1.
\end{lemma}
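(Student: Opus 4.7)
The plan is to carry out a face-by-face analysis of the net $\Gamma$ inside the sub-disk $D \subset Q$ bounded by the one-sided arc $\gamma$ and the sub-segment $[a,b]$ of the side $L$, following the approach of Lemmas~\ref{two-sided} and~\ref{two-sided1}. When both $a$ and $b$ are not corners of $Q$, the open segment $(a,b)$ contains no corners of $Q$, so every lateral vertex $v$ in $(a,b)$ has degree three and emits a unique interior edge; let $\eta_v$ denote the maximal interior arc of $\Gamma$ starting with that edge. The same holds at any lateral vertex in $(a,b)$ in the general case as well.

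The first main step is to show that some $\eta_v$ ends at a corner $p$ of $Q$ not lying on $L$, yielding one of the required separator arcs. The arc $\eta_v$ is not a loop by primitivity and not a diagonal by irreducibility, and by Theorem~\ref{diagonal} together with Remark~\ref{rmk-arcs} it cannot end at a lateral vertex on the side of $Q$ opposite $L$. The remaining possibilities reduce as follows: if $\eta_v$ ends at another vertex of $(a,b)$, then $\eta_v$ itself is a one-sided arc strictly inside $D$ and we iterate on a smaller disk; if $\eta_v$ reaches an interior vertex of $\gamma$ or a lateral vertex on a side of $Q$ adjacent to $L$, the local face-adjacency argument of Lemma~\ref{two-sided} (excluding that a side of $Q$ and an interior arc on the same circle of $\P$ share a face boundary) rules it out, except in the two-sided configuration where Lemma~\ref{two-sided1} itself produces a separator structure that we match to the desired one; if $\eta_v$ were to reach $c_1$ or $c_2$, the same adjacency argument again gives a contradiction. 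Thus $\eta_v$ terminates at a corner $p \notin L$, providing a separator arc $\alpha$ with ends at $p$ and some $x \in (a,b)$.

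Having located $\alpha$, I would then examine the face $H$ of $\Gamma$ adjacent to $p$ and to $\alpha$ on the opposite side, bounded by $\alpha$, portions of the two sides of $Q$ meeting at $p$, and another interior arc $\beta$ starting at $p$. By the argument of Lemma~\ref{two-sided1}---exploiting that the two sides of $Q$ meeting at $p$ intersect only at $p$---the arc $\beta$ cannot be two-sided, and the face-matching constraints force $\beta$ to end at a lateral vertex in $(a,b)$ as well, giving the second separator arc with common endpoint $p$. Finally, for the order assertion when both $a,b$ are not corners: a corner of order $1$ has exactly two interior edges at it and its local neighborhood is isomorphic to a triangle $T_1$ or $E_1$ (Figures~\ref{triangles-t1} and~\ref{triangles-e1}); inspection of these nets shows that the two interior arcs emanating from such a corner extend to reach $L$ at $c_1$ or $c_2$ rather than inside $(a,b)$, contradicting the placement of the endpoints of $\alpha$ and $\beta$. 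Hence the order of $p$ must exceed $1$. The principal obstacle is this combinatorial case analysis, where each local configuration of faces near $\gamma$ and near a candidate endpoint of $\eta_v$ must be reconciled with the adjacency rules of the partition $\P$; while each individual case is straightforward, enumerating them exhaustively is the main challenge.
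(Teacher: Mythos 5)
Your proposal is a plan with several genuine gaps, and it misses the observation that makes the paper's argument short. The paper starts from the fact that $a$ and $b$ both map to the intersection of the circle of $L$ with the circle of $\gamma$, so the segment $(a,b)$ contains exactly two lateral vertices, namely the crossings of $L$ with the preimages of the \emph{other two} circles of $\P$. This pins down the two candidate arcs $\alpha$ and $\beta$ at once and, crucially, identifies the single face $F$ of $\Gamma$ lying between them and adjacent to $L$; the whole proof is then a two-case analysis on whether $F$ is a triangle (impossible, because a face further along would have two disjoint boundary segments on the circle of $L$) or a quadrilateral (which forces $\alpha$ and $\beta$ to meet at a common vertex $p$ that can only be a corner). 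By treating the lateral vertices of $(a,b)$ one at a time and following each maximal arc $\eta_v$ separately, you lose exactly this mechanism: nothing in your argument couples the two arcs, so your final step --- ``the face-matching constraints force $\beta$ to end at a lateral vertex in $(a,b)$ as well'' --- is asserted, not proved. That is the heart of the lemma (the conclusion is that \emph{two} separator arcs share the corner $p$ and both return to $(a,b)$), and it is missing.

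There are further concrete holes. In your termination analysis for $\eta_v$: an arc cannot ``end'' at an interior vertex of $\gamma$ (interior vertices have degree $4$, the arc passes through), so that case is not a terminal one; if $\eta_v$ is one-sided its second endpoint need not lie in $(a,b)$ nor need the arc stay inside $D$, so ``iterate on a smaller disk'' is not justified as stated; and in the two-sided sub-case, Lemma \ref{two-sided1} produces separator arcs at some corner $q$, but gives no control on where their other ends lie, so it does not ``match the desired'' conclusion that the ends lie inside $(a,b)$. The symbols $c_1,c_2$ are never defined, which makes both that case and your final order argument unverifiable. For the order statement the paper argues differently and decisively: if $p$ had order $1$, the boundary edges of the two faces flanking $F$ away from $\gamma$ would be sides of $Q$ running from $p$ to $L$, forcing $Q$ to be a triangle; your appeal to inspection of $T_1$/$E_1$ nets does not establish that the two interior arcs at an order-one corner must avoid $(a,b)$. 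As written, the proposal does not constitute a proof.
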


\begin{proof} Since the endpoints $a$ and $b$ of $\gamma$ belong to the intersection
of the same two circles, there are two lateral vertices of $\Gamma$ inside
the segment $(a,b)$ of $L$, corresponding to the intersections of $L$ with preimages of two circles
of $\P$ other than those to which $L$ and $\gamma$ are mapped.
Let $\alpha$ and $\beta$ be interior arcs of $\Gamma$ with endpoints at the two vertices inside $(a,b)$.
The face $F$ of $\Gamma$ bounded by the arcs $\alpha$ and $\beta$ is either a triangle
(see Fig.~\ref{fig:onesided}a) or a quadrilateral (see Fig.~\ref{fig:onesided}b).

If $F$ is a triangle then the faces $G$ and $H$ of $\Gamma$ adjacent to $a$ and $b$ outside $\gamma$ are triangular, thus the edges of arcs $\alpha$ and $\beta$
outside $\gamma$ (with endpoints $c$ and $d$, respectively) must have other ends mapped to the same circle as $L$. This is impossible since the face $I$ of $\Gamma$ (see Fig.~\ref{fig:onesided}a) cannot have two disjoint segments mapped to the same circle in its boundary. This argument holds also when either $a$ or $b$ is a corner of $Q$.

If $F$ is a quadrilateral then the arcs $\alpha$ and $\beta$ intersect at a point $p$ outside $\gamma$.
Note that $p$ cannot be a lateral vertex since both $cp$ and $dp$ are interior edges of $\Gamma$.
If $p$ were an interior vertex of $\Gamma$, the edges of arcs $\alpha$ and $\beta$ beyond $p$ would have
their other ends mapped to the same circle as $L$, which is impossible for the same reason as when $F$ a triangle.
Thus $p$ must be a corner of $Q$.
If both $a$ and $b$ are not corners of $Q$ then $p$ cannot have order 1. Otherwise, the boundary edges of $G$ and $H$ opposite $\gamma$ would be two sides of $Q$,
each of them having one end at $p$ and the other end on the side $L$ of $Q$, thus $Q$ would be a triangle, a contradiction.
\end{proof}

\begin{cor}\label{empty}
Let $Q$ be a generic quadrilateral with all four angles of order $0$.
Then the net of $Q$ does not have interior arcs.
\end{cor}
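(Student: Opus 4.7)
The plan is to exploit the fact that a corner $p$ of $Q$ of order $0$ has degree exactly $2$ in $\Gamma$, with both of its incident edges lying on the two sides of $Q$ adjacent to $p$, and then to rule out each type of interior arc in turn. The degree count is a direct consequence of the local model of the developing map at a non-integer corner: the spherical angle at $p$ is $\pi\alpha$ with $\alpha\in(0,1)$, so only the two distinct circles $C_1,C_2$ of $\P$ carrying the sides at $p$ pass through $\Phi(p)$ (the other two circles miss $\Phi(p)$ by the no-triple-intersection hypothesis), and in a local sector of angular width $\pi\alpha<\pi$ each of $C_1,C_2$ contributes a single ray at $p$, namely the corresponding side of $Q$.

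From this I would derive the central rigidity statement: no interior arc of $\Gamma$ can terminate at a corner of $Q$, and no loop can pass through one. Indeed, a $C_\ell$-arc entering $p$ would do so through a side-edge on some side $L$ of $Q$ carried by $C_\ell$; but at each lateral vertex of $L$ the $C_\ell$-net consists only of the two side-edges of $L$ (the single interior edge at such a vertex lies on another circle of $\P$), so the arc is forced to travel along $L$ all the way to the corner at its other end and is therefore lateral rather than interior. A closed $C_\ell$-path through $p$ would likewise require two $C_\ell$-edges at $p$, but the $C_\ell$-net at $p$ has only one.

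These rigidity statements do most of the remaining work, and I would apply them as follows. First, $Q$ has no diagonal arcs and is therefore irreducible by Definition~\ref{arcs}; by Lemma~\ref{loop}, every loop of an irreducible quadrilateral has a vertex at a corner of $Q$, so combined with the rigidity $Q$ has no loops and is primitive. A hypothetical one-sided arc would, by Lemma~\ref{one-sided}, produce two separator arcs with a common end at a corner of $Q$, again contradicting the rigidity; a two-sided arc would, by Lemma~\ref{two-sided1}, force the existence of a corner of order at least $1$; and arcs with ends at lateral vertices on opposite sides of $Q$ are excluded directly by Remark~\ref{rmk-arcs}. To finish, any interior arc extends to a maximal interior arc, and such a maximal arc cannot terminate at an interior vertex (where the relevant $\Gamma_C$ has degree $2$) nor at a corner (by the rigidity), so both its endpoints must be lateral vertices, placing it in one of the three configurations already excluded. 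The only step calling for genuine care is the rigidity statement at order-$0$ corners; everything else is a routine match between the impossibilities produced and the taxonomy of interior arcs from Subsection~\ref{sub:arcs}.
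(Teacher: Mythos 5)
Your proof is correct and follows essentially the same route as the paper's: establish that order-$0$ corners admit no interior edges (hence $Q$ is primitive and has no separator or diagonal arcs), then invoke Lemmas \ref{two-sided1} and \ref{one-sided} to kill two-sided and one-sided arcs, with Remark \ref{rmk-arcs} handling arcs between opposite sides. The paper compresses all of this into two sentences; you have merely made explicit the degree-$2$ rigidity at order-$0$ corners and the exhaustion of arc types that the paper leaves implicit.
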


\begin{proof} Note that $Q$ is primitive and cannot have separator arcs. It follows from Lemmas \ref{two-sided1} and \ref{one-sided} that $Q$ cannot have two-sided or one-sided arcs.
\end{proof}

\medskip
\begin{figure}
\centering
\includegraphics[width=4.8in]{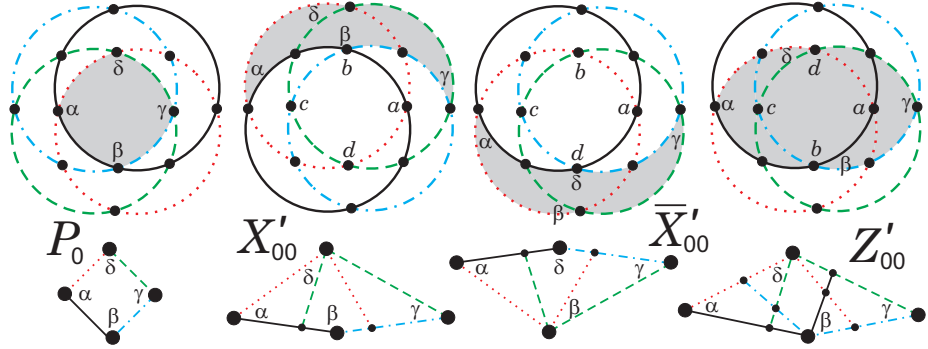}
\caption{Basic primitive quadrilaterals and their nets.}\label{basic}
\end{figure}

\section{Classification of nets of primitive and irreducible quadrilaterals}\label{section:primitive}
In this section, $Q$ is a primitive (see Definition \ref{arcs}) generic spherical quadrilateral.
It is shown below that $Q$ has at least two corners of order 0.
We order corners $(a_0,\dots,a_3)$ of $Q$ so that the order of $a_0$ is 0, and the sum of orders
of $a_0$ and $a_2$ does not exceed the sum of orders of $a_1$ and $a_3$.

\subsection{Extension of a side}\label{sub:extension}
Let $p$ be a corner of $Q$ of order 0, with the angle $\alpha<1$.
Let $L$ and $M$ be two sides of $Q$ adjacent to $p$. Suppose that $M$ has order at most 2, and let $q$ be the corner of $Q$ at the other end of $M$.
Then we can attach to $Q$ a spherical triangle $T_n$ with an integer corner at $q$ and two other corners at $p$ and $p'$, so that the side $[p,q]$ of $T_n$ is common with the side $M$ of $Q$, and the base $[p,p']$ of $T_n$ is extending $L$ beyond $p$. The union of $Q$ and $T_n$ is
a primitive spherical quadrilateral $Q'$ with the side $L'=L\cup [p,p']$, and the angle at its corner $p'$ equal $\alpha$ if $n$ is even and $1-\alpha$ if $n$ is odd. We call this operation {\sl extension of the side} $L$ of $Q$ beyond its corner $p$.
Note that extending $L'$ beyond $p'$ by attaching a triangle $T_m$ to $Q'$ is the same as a single extension attaching
a triangle $T_{n+m}$ to $Q$.

\begin{thm}\label{primitive}
Every generic primitive quadrilateral $Q$ can be obtained from one of the basic quadrilaterals $P_0$, $X'_{00}$, $\bar X'_{00}$ and $Z'_{00}$ (see Fig.~\ref{basic}) by at most two extension operations.
\end{thm}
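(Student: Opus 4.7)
The plan is to prove the theorem by induction on the total corner order $\sum_j \mathrm{ord}(a_j)$ of $Q$, using as the inductive reduction the inverse of the extension operation of Section~\ref{sub:extension}: whenever $Q$ has a corner $q$ of positive order $n$, I would cut off a triangle $T_n$ along a separator arc emanating from $q$, producing a strictly smaller primitive quadrilateral $Q''$ in which $q$ has order $0$. Reversing the chain of reductions then yields the required sequence of extensions starting from a basic quadrilateral.

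For the base case, if all corners of $Q$ have order $0$, then Corollary~\ref{empty} says that the net $\Gamma$ of $Q$ has no interior arcs. Hence $\Gamma$ is determined entirely by the cyclic assignment of the four circles of $\P$ to the four sides of $Q$, subject to Remark~\ref{notwocircles} (opposite sides lie on distinct circles) and to the triangle/quadrilateral adjacency rule of Definition~\ref{net}. Enumerating these cyclic patterns modulo the symmetries of $\P$ gives exactly four possibilities, which I would match with the nets of $P_0$, $X'_{00}$, $\bar X'_{00}$, and $Z'_{00}$ in Fig.~\ref{basic}.

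For the inductive step, let $q$ have order $n\ge 1$. Its two adjacent sides lie on two different circles $C_1, C_2$ of $\P$, and by the no-triple-intersection hypothesis the other two circles miss the image of $q$. Consequently the local net at $q$ consists of $2(n+1)$ edges on $C_1\cup C_2$, of which only two are boundary edges; the remaining $2n$ begin interior arcs alternating between $C_1$ and $C_2$. By Lemma~\ref{nodiagonals} and primitivity none of these arcs is a loop and none terminates at a corner. Lemmas~\ref{two-sided}, \ref{two-sided1}, and \ref{one-sided} then preclude the possibility that every such arc ends on a side adjacent to $q$, since this would force separator-triangle configurations incompatible with primitivity; so at least one arc $\sigma$ must be a separator ending at a lateral vertex $p$ on a side of $Q$ not adjacent to $q$. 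This separator cuts off a spherical triangle $T$ with vertices $q$, $p'$ (a corner of $Q$ adjacent to $q$), and $p$; the structure of the fan at $q$ together with Remark~\ref{t1-net} identifies $T$ with $T_n$. Removing $T$ leaves a primitive spherical quadrilateral $Q''$ in which $q$ has order $0$, and the inverse operation is precisely the extension of Section~\ref{sub:extension} applied to $Q''$.

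It remains to count the extensions. Since each extension raises the order of exactly one corner while leaving the newly created corner $p'$ of order $0$, the number of extensions needed equals the number of positive-order corners of $Q$. I would therefore show, independently of the reduction itself, that a primitive generic quadrilateral has at most two positive-order corners: the local analysis above at each putative positive-order corner yields a separator arc and a cut-off triangle, and three such disjoint triangles together with the adjacency rules on $\P$ and Remark~\ref{notwocircles} force either a loop or a diagonal arc, contradicting primitivity. The main obstacle is the precise extraction of the separator $\sigma$ in the inductive step: the lemmas of Section~\ref{sub:arcs} are phrased for interior arcs whose ends are not corners, so locating one that starts at $q$ and reaches a non-adjacent side requires careful bookkeeping with the $2n$ interior arcs of the fan at $q$. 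A secondary subtlety is identifying the cut-off triangle as $T_n$ rather than $E_n$, which uses the integer angle at $q$ together with the feature of Remark~\ref{t1-net} that the net of $T_n$ contains an edge from its integer corner to its base.
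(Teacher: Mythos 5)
There is a genuine gap, in fact two related ones, both stemming from the assumption that a positive-order corner can always be reduced to order $0$ by cutting off a single triangle $T_n$. First, your base case is wrong: if all four corners of $Q$ have order $0$, then by Corollary \ref{empty} the net has no interior arcs at all, so $Q$ is a single quadrilateral face of $\P$, i.e.\ $Q=P_0$ — the base case produces only \emph{one} of the four basic quadrilaterals, not four. The other three basic quadrilaterals have corners of positive order ($X'_{00}$ has a corner of order $1$, $Z'_{00}$ has two opposite corners of order $1$), so your induction must produce them in the inductive step; but your inductive step claims to cut off a $T_n$ at any corner of order $n\ge 1$ and reduce that corner to order $0$, and this fails precisely for these quadrilaterals. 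In $X'_{00}$ the two separator arcs emanating from the order-$1$ corner $p$ cut off triangles with all non-integer corners (of type $E$, not $T$), and the inverse of such a cut is not an extension operation in the sense of Section \ref{sub:extension}. This is exactly why the paper's Lemma \ref{cut} is restricted to corners of order \emph{greater than} $1$, and why the terminal objects of the reduction (quadrilaterals with all corners of order at most $1$) must be classified separately, via Lemma \ref{onebigangle} (one order-$1$ corner), Lemma \ref{rs} (two adjacent), and Lemma \ref{twocorners1} (two opposite). Your induction, as stated, would either get stuck at $X'_{00}$ or incorrectly reduce it.

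Second, the counting argument is false as stated: the number of extensions needed is \emph{not} the number of positive-order corners. The quadrilateral $X_{kl}$ with $k,l\ge 1$ has a single corner of positive order $k+l$, yet it requires two extensions, since $T_k$ and $T_l$ extend two \emph{different} adjacent sides and therefore do not merge into a single extension (only consecutive extensions of the \emph{same} side combine, per the remark at the end of Section \ref{sub:extension}). So even granting that a primitive quadrilateral has at most two positive-order corners, the bound of two extensions does not follow. The paper instead obtains the bound by explicitly enumerating the attachment slots on each basic quadrilateral (eight for $P_0$, six for $X'_{00}$, four for $Z'_{00}$) and checking, via the lemmas above, that every primitive quadrilateral arises from at most two of these slots being used; a corrected version of your argument would need to replace the corner count by a count of occupied slots, which again requires the structural lemmas you were hoping to bypass.
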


Proof of Theorem \ref{primitive} will be given at the end of this section.
It implies that the nets of all primitive quadrilaterals belong to the following list
(see Figs.~\ref{4circles-x}, \ref{4circles-z}, \ref{4circles-r-s} and \ref{4circles-uvw} where the basic
quadrilateral is shaded).

\begin{figure}
\centering
\includegraphics[width=4.8in]{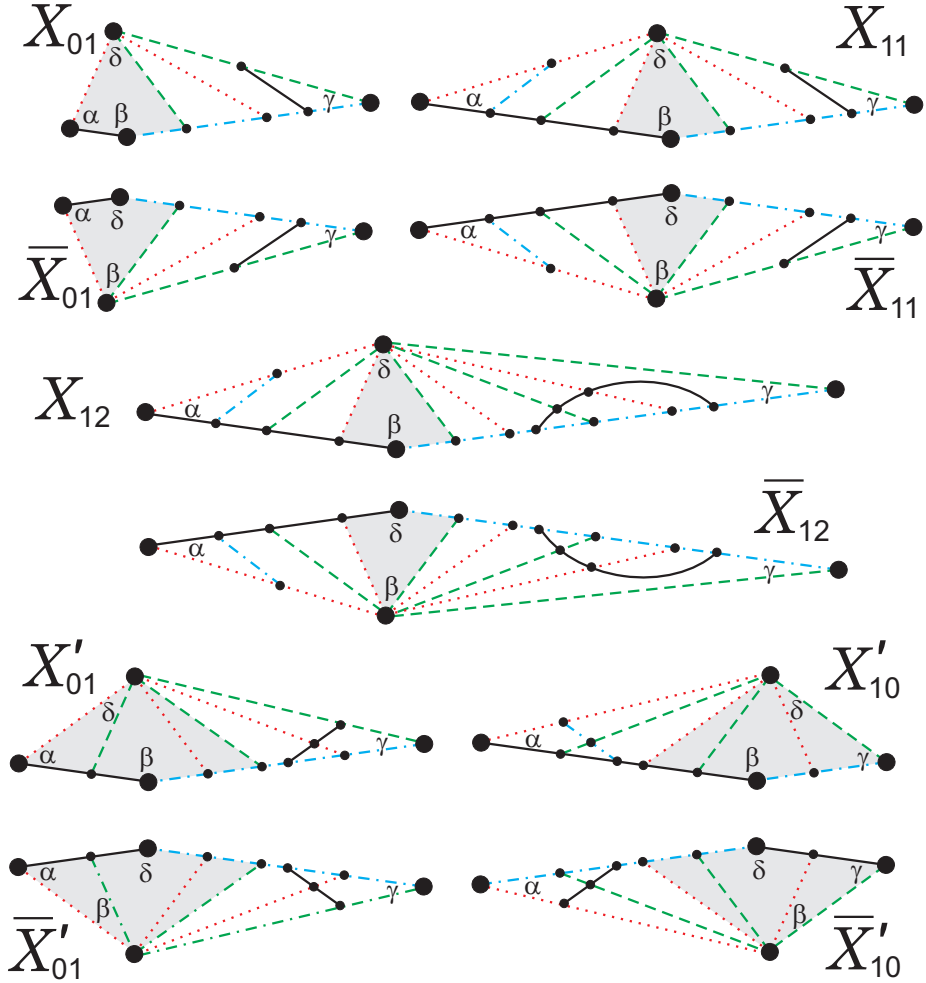}
\caption{Quadrilaterals of types $X_{kl}$, $\bar X_{kl}$, $X'_{kl}$ and $\bar X'_{kl}$.}\label{4circles-x}
\end{figure}

\begin{figure}
\centering
\includegraphics[width=4.8in]{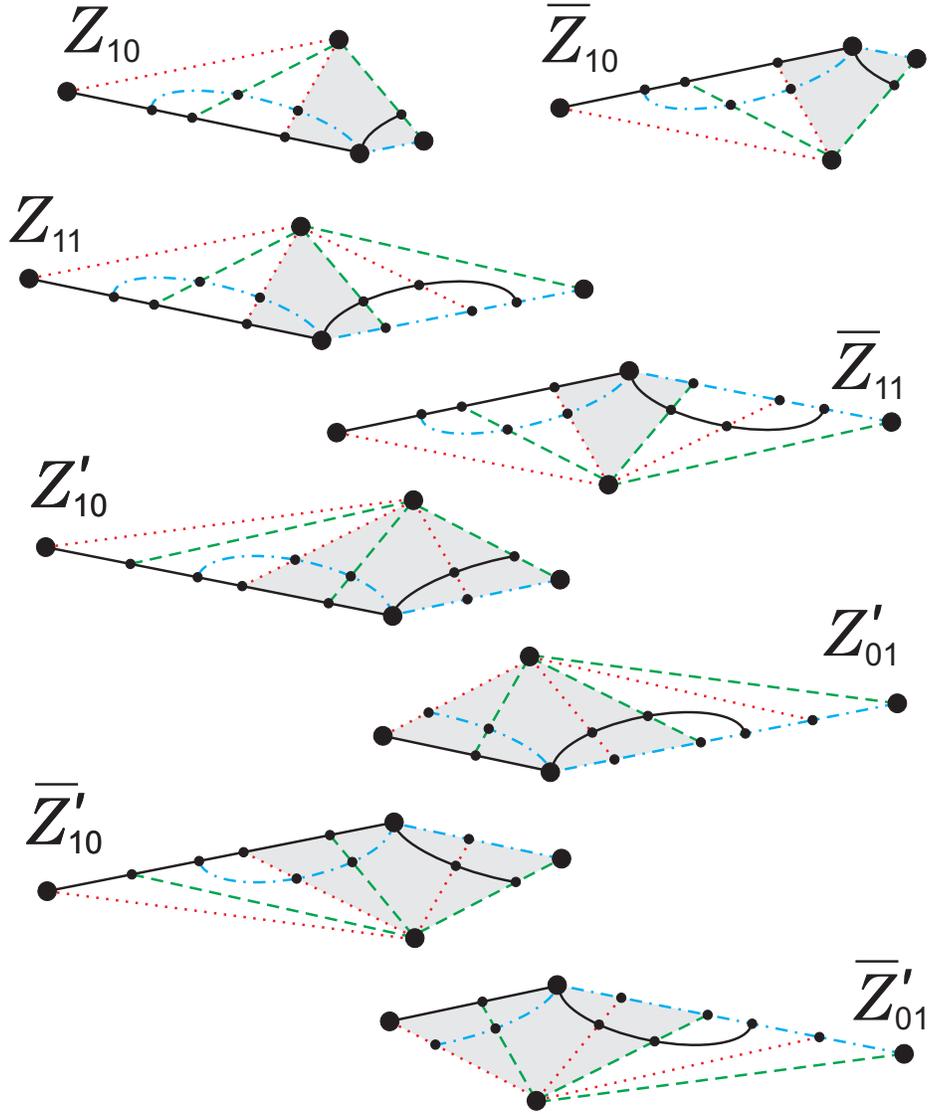}
\caption{Quadrilaterals of types $Z_{kl}$, $\bar Z_{kl}$, $Z'_{kl}$ and $\bar Z'_{kl}$.}\label{4circles-z}
\end{figure}

\begin{figure}
\centering
\includegraphics[width=4.8in]{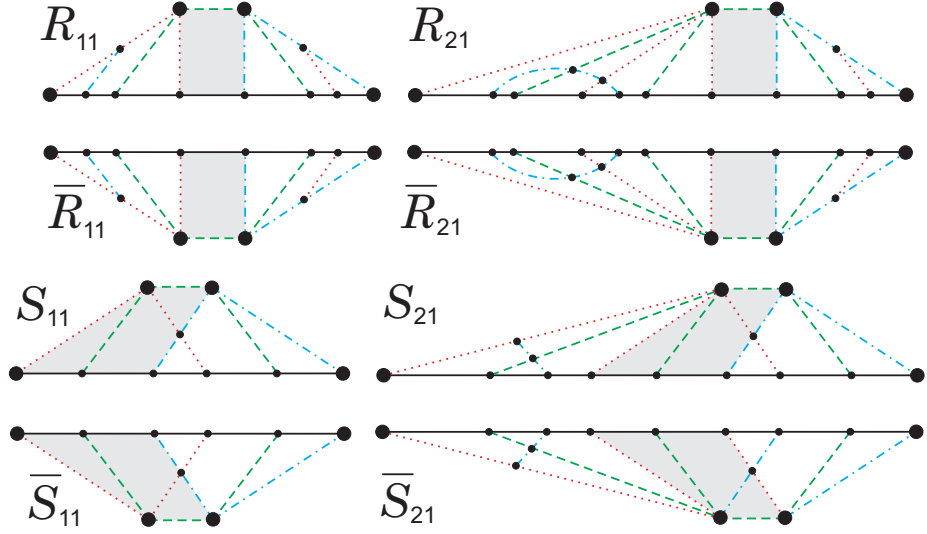}
\caption{Quadrilaterals of types $R_{kl}$, $\bar R_{kl}$, $S_{kl}$ and $\bar S_{kl}$.}\label{4circles-r-s}
\end{figure}

\begin{figure}
\centering
\includegraphics[width=4.8in]{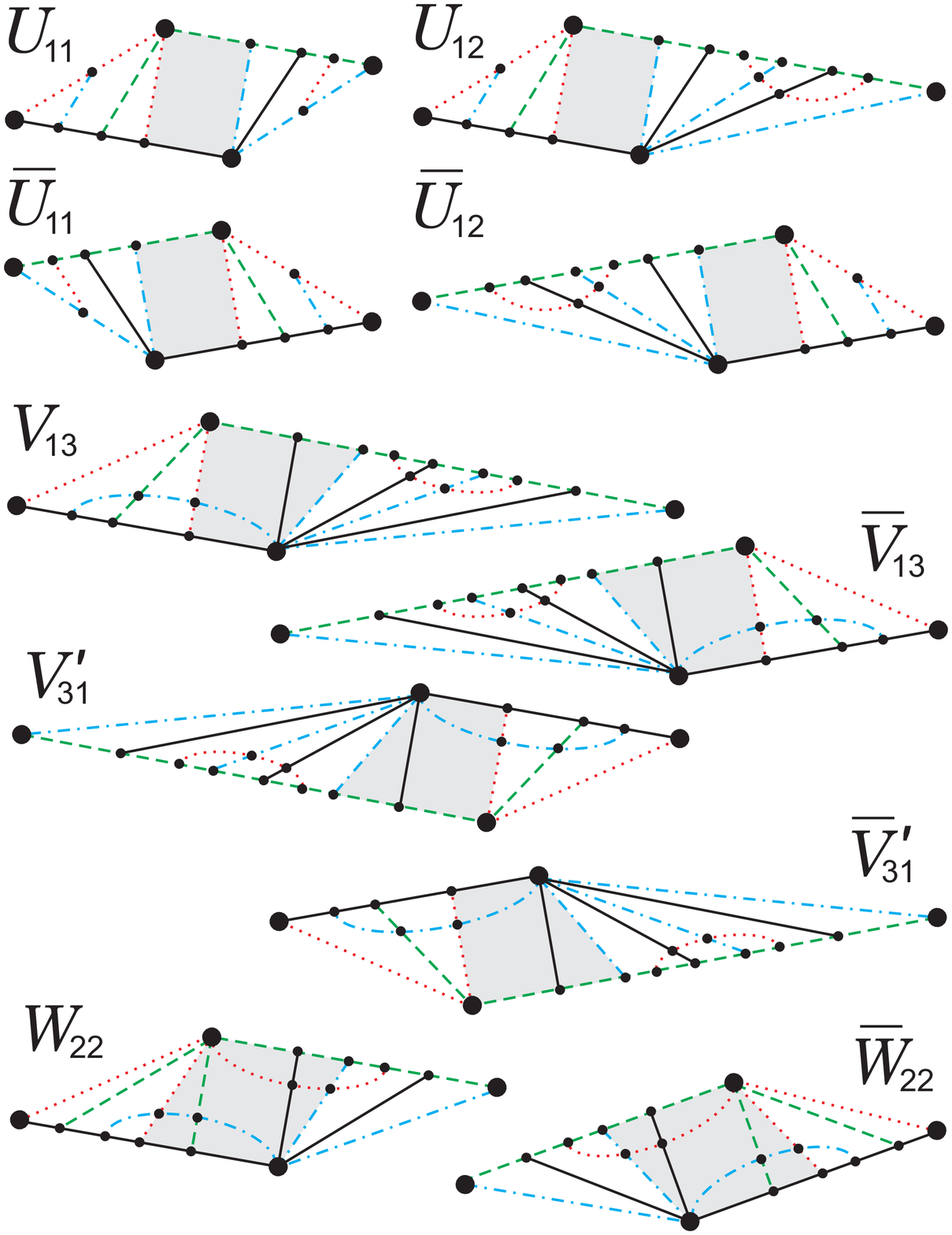}
\caption{Quadrilaterals of types $U_{kl}$, $\bar U_{kl}$, $V_{kl}$, $\bar V_{kl}$, $V'_{kl}$, $\bar V'_{kl}$, $W_{kl}$, $\bar W_{kl}$.}\label{4circles-uvw}
\end{figure}

\medskip
\noindent {\bf Type $X$.} A quadrilateral $X_{kl}$ for $k,l\ge 0$ and $k+l\ge 1$ (see Fig.~\ref{4circles-x}) can be obtained by attaching triangles $T_k$ and $T_l$ to adjacent sides of the basic quadrilateral $P_0$. When either $k=0$ or $l=0$, only one triangle is attached.
The quadrilateral $X_{kl}$ has one corner of order $k+l$, the other three corners being of order $0$.
A quadrilateral $\bar X_{kl}$ is reflection-symmetric (preserving opposite corners of order 0) to $X_{kl}$.

\medskip
\noindent {\bf Type $X'$.} A quadrilateral $X'_{kl}$ for $k,l\ge 0$ (see Fig.~\ref{4circles-x}) can be obtained by attaching triangles $T_k$ and $T_l$ to adjacent sides of the basic quadrilateral $X'_{00}$ so that $T_k$ and $T_l$ have a common vertex at the corner of $X'_{00}$ of order 1. The quadrilateral $X'_{kl}$ has one corner of order $k+l+1$, the other three corners being of order $0$.
A quadrilateral $\bar X'_{kl}$ is reflection-symmetric (preserving opposite corners of order 0) to $X'_{kl}$.

\medskip
\noindent {\bf Type $Z$.} A quadrilateral $Z_{kl}$ for $k,l\ge 0$ and $k+l\ge 1$ (see Fig.~\ref{4circles-z}) can be obtained by attaching triangles $T_k$ and $T_l$ to adjacent sides of the basic quadrilateral $\bar X'_{00}$ so that $T_k$ and $T_l$ have a common vertex at the corner of $\bar X'_{00}$ opposite to its corner of order 1. When either $k=0$ or $l=0$, only one triangle is attached. The quadrilateral $Z_{kl}$ has opposite corners of orders $k+l$ and $1$, the other two opposite corners being of order $0$.
A quadrilateral $\bar Z_{kl}$ is reflection-symmetric (preserving the corners of order 0) to $Z_{kl}$.

\medskip
\noindent {\bf Type $Z'$.} A quadrilateral $Z'_{kl}$ for $k,l\ge 0$ (see Fig.~\ref{4circles-z}) can be obtained by attaching triangles $T_k$ and $T_l$ to adjacent sides of the basic quadrilateral $Z'_{00}$ so that $T_k$ and $T_l$ have a common vertex at a corner of $Z'_{00}$ of order 1. The quadrilateral $Z'_{kl}$ has opposite corners of orders $k+l+1$ and $1$, the other two corners being of order $0$.
A quadrilateral $\bar Z'_{kl}$ is reflection-symmetric (preserving the corners of order 0) to $Z'_{kl}$.

\medskip
\noindent {\bf Type $R$.} A quadrilateral $R_{kl}$ for $k\ge l\ge 1$ (see Fig.~\ref{4circles-r-s}) can be obtained by attaching triangles $T_k$ and $T_l$ to opposite sides of the basic quadrilateral $P_0$ so that both triangles extend the same side of $P_0$. The quadrilateral $R_{kl}$ has adjacent corners of orders $k$ and $l$, the other two corners being of order $0$.
A quadrilateral $\bar R_{kl}$ is reflection-symmetric (preserving the corners of order 0) to $R_{kl}$.

\medskip
\noindent {\bf Type $S$.} A quadrilateral $S_{kl}$ for $k\ge l\ge 1$ (see Fig.~\ref{4circles-r-s}) can be obtained by attaching triangles $T_{k-1}$ and $T_l$ (or $T_k$ and $T_{l-1}$) to opposite sides of the basic quadrilateral $X'_{00}$, so that both triangles extend the same side of order 2 of $X'_{00}$.
The quadrilateral $S_{kl}$ has adjacent corners of orders $k$ and $l$, the other two corners being of order $0$.
 A quadrilateral $\bar S_{kl}$ is reflection-symmetric(preserving the corners of order 0) to $S_{kl}$.

\medskip
\noindent {\bf Type $U$.} A quadrilateral $U_{kl}$ for $k,l\ge 1$ (see Fig.~\ref{4circles-uvw}) can be obtained by attaching triangles $T_k$ and $T_l$ to opposite sides of the basic quadrilateral $P_0$, so that $T_k$ and $T_l$ have vertices at the opposite corners of the quadrilateral $P_0$ and extend its opposite sides.
The quadrilateral $U_{kl}$ has opposite corners of orders $k$ and $l$, the other two corners being of order $0$.
A quadrilateral $\bar U_{kl}$ is reflection symmetric (exchanging the opposite corners of order 0) to $U_{kl}$.

\medskip
\noindent {\bf Type $V$.} A quadrilateral $V_{kl}$ for $k\ge 1$, $l\ge 2$ (see Fig.~\ref{4circles-uvw}) can be obtained by attaching triangles $T_k$ and $T_{l-1}$ to opposite sides of the basic quadrilateral $\bar X'_{00}$, so that $T_{l-1}$ has its vertex at the corner of order 1 of $\bar X'_{00}$, and $T_k$ has its vertex at the opposite corner.
The quadrilateral $V_{kl}$ has opposite corners of orders $k$ and $l$, the other two corners being of order $0$.
A quadrilateral $\bar V_{kl}$ is reflection symmetric (exchanging the opposite corners of order 0) to $V_{kl}$. A quadrilateral $V'_{kl}$ is rotation symmetric to $V_{lk}$.
A quadrilateral $\bar V'_{kl}$ is reflection symmetric (exchanging the opposite corners of order 0) to  $V'_{kl}$.

\medskip
\noindent {\bf Type $W$.} A quadrilateral $W_{kl}$ for $k,l\ge 2$ (see Fig.~\ref{4circles-uvw}) can be obtained by attaching triangles $T_{k-1}$ and $T_{l-1}$ to opposite sides of the basic quadrilateral $Z'_{00}$, so that $T_{k-1}$ and $T_{l-1}$ have vertices at the opposite corners of order 1 of $Z'_{00}$ and extend its opposite sides.
The quadrilateral $W_{kl}$ has two opposite corners of orders $k$ and $l$, the other two corners being of order $0$.
A quadrilateral $\bar W_{kl}$ is reflection symmetric (exchanging the opposite corners of order 0) to $W_{kl}$.

Note that an extended side of a quadrilateral has order greater than 3. It is short (of order less than 6) only when extended by a single triangle $T_1$.
Quadrilaterals of types $R$ and $S$ have one extended side.
Quadrilaterals of types $X$, $X'$, $Z$, $Z'$ have either one extended side or two adjacent extended sides.
Quadrilaterals of types $U$, $V$, $W$ have two opposite extended sides.

\begin{lemma}\label{onebigangle}
Let $Q$ be a generic primitive quadrilateral with one corner $p$ of order greater than 0 and three other corners of order 0. Then the net of $Q$ is of the type either $X$ or $X'$, or one of their reflection symmetric quadrilaterals $\bar X$ and $\bar X'$.
\end{lemma}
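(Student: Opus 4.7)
The plan is to apply Theorem~\ref{primitive}, which expresses $Q$ as the result of at most two extension operations applied to one of the four basic primitive quadrilaterals $P_0$, $X'_{00}$, $\bar X'_{00}$, $Z'_{00}$, and then to check which sequences of operations produce a quadrilateral with exactly one positive-order corner.

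First I would record how an extension affects the orders of the corners. An extension attaches a triangle $T_n$ (with $n\ge 1$) along a side $M$ from a chosen order-$0$ corner $p$ to an adjacent corner $q$; the corner $p$ is absorbed into the enlarged side $L'$, a new order-$0$ corner $p'$ appears on $L'$, and the order of $q$ strictly increases by $n$. Consequently, the number of positive-order corners is preserved if $q$ already had positive order, and increases by exactly one if $q$ had order $0$. This monotone invariant is the engine of the proof.

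Next I would tally the positive-order corner count for each basic quadrilateral: $P_0$ has $0$, both $X'_{00}$ and $\bar X'_{00}$ have $1$ (at their unique order-$1$ corner $q_0$), and $Z'_{00}$ has $2$. Since extensions cannot decrease this count, $Q$ cannot come from $Z'_{00}$. Starting from $X'_{00}$, to preserve the count at $1$ every extension must target $q_0$; with zero, one, or two extensions at $q_0$ one obtains precisely $X'_{00}$, $X'_{n0}$, and $X'_{kl}$ respectively, all of type $X'$. The case $\bar X'_{00}$ is completely symmetric and yields the type $\bar X'$. Finally, starting from $P_0$, at least one extension is needed. A single extension at any corner $q$ of $P_0$ gives an $X_{n0}$-type net. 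For two extensions, the count argument forces both to be applied at a single common corner $q$ of $P_0$, since distinct corners would create two positive-order corners. Because $q$ has only two adjacent sides in $P_0$ and repeating the same side is (by the remark following the extension definition) equivalent to a single extension, the two triangles are attached along the two sides meeting at $q$, which is exactly the defining construction of $X_{kl}$.

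The only step that requires a little care is the last one: one must rule out that two extensions from $P_0$ at the same corner could produce some other type such as $R$, $S$, $U$, $V$, or $W$. But the definitions of those types require the two attached triangles to have integer vertices at distinct corners of the starting basic quadrilateral, so they necessarily produce at least two positive-order corners and thus cannot arise here. This completes the enumeration and confirms that the net of $Q$ is of type $X$, $\bar X$, $X'$, or $\bar X'$.
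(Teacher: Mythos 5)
Your reduction to Theorem~\ref{primitive} is circular in the context of this paper. Although Theorem~\ref{primitive} is \emph{stated} before Lemma~\ref{onebigangle}, its proof is deferred to the end of the section and explicitly invokes Lemma~\ref{onebigangle}: after using Lemma~\ref{cut} to strip off triangles until all corners have order at most~$1$, the paper classifies the base case ``one corner of order $1$, three corners of order $0$'' precisely by citing Lemma~\ref{onebigangle} (to conclude the net is one of $X_{01}$, $X_{10}$, $X'_{00}$, $\bar X_{01}$, $\bar X_{10}$, $\bar X'_{00}$). So your argument assumes the classification you are trying to prove. The bookkeeping you do on top of Theorem~\ref{primitive} --- that an extension never decreases the number of positive-order corners, that $Z'_{00}$ is therefore excluded, and that all extensions must target the single positive-order corner --- is correct and would be a clean derivation of the lemma \emph{if} Theorem~\ref{primitive} had an independent proof, but as the paper is organized it does not.

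The paper's actual proof works at the level of the net, using only the earlier arc lemmas. Let $q$ be the corner opposite $p$ and $F$ the face of the net adjacent to $q$. Lemma~\ref{two-sided1} rules out two-sided arcs at $q$, forcing $F$ to be a quadrilateral face; the vertex of $F$ opposite $q$ can be neither an interior vertex (again by Lemma~\ref{two-sided1}) nor a lateral vertex (by Lemma~\ref{one-sided} and the prohibition on arcs joining opposite sides), hence it must be the corner $p$. Then $Q$ is the union of $F$ with two triangles sharing the vertex $p$ --- either $T_k,T_l$ (giving type $X$ or $\bar X$) or $E_k,E_l$ (giving type $X'$ or $\bar X'$). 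If you want to keep your extension-based strategy, you would first have to supply a proof of Theorem~\ref{primitive} that does not pass through Lemma~\ref{onebigangle}; otherwise you should argue directly from Lemmas~\ref{two-sided1} and~\ref{one-sided} as above.
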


\begin{proof} Let $q$ be the corner of $Q$ opposite to $p$, and let $F$ be the face of the net $\Gamma$
of $Q$ adjacent to $q$. It follows from Lemma \ref{two-sided1} that there are no two-sided arcs of $\Gamma$ with the ends on the sides of $Q$ adjacent to $q$.
Thus $F$ must be a quadrilateral face of $\Gamma$.
The vertex $a$ of $F$ opposite to its vertex $q$ cannot be an interior vertex of $\Gamma$. Otherwise
the two arcs of the boundary of $F$ would be two-sided, in contradiction with Lemma \ref{two-sided1}.
The vertex $a$ cannot be a lateral vertex of $\Gamma$, since in that case one of the arcs of the boundary
of $F$ would have two ends at the opposite sides of $Q$, which is forbidden, or would be a one-sided arc,
in contradiction with Lemma \ref{one-sided}. Thus $a=p$ is the corner of $Q$ opposite $q$.

It follows that $Q$ is a union of $F$ and either two triangles $T_k$ and $T_l$ with integer angles $k$ and $l$ at their common vertex $p$ attached to the sides of $F$ adjacent to $p$, such that $k+l>0$ is the order of $p$, or
two triangles $E_k$ and $E_l$ with non-integer corners of order $k$ and $l$ at their common vertex $p$ attached to the sides of $F$ adjacent to $p$, such that $k+l+1$ is the order of $p$.
In the first case, $Q$ has type $X_{kl}$ or $\bar X_{kl}$, in the second case $Q$ has type $X'_{kl}$ or $\bar X'_{kl}$.
\end{proof}

\begin{figure}
\centering
\includegraphics[width=3in]{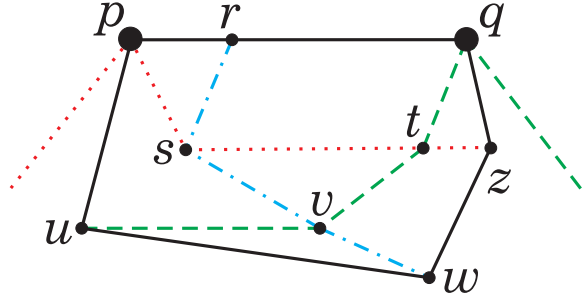}
\caption{Illustration for the proof of Lemma \ref{adjacentcorners}.}\label{fig:adjacent}
\end{figure}

\begin{lemma}\label{adjacentcorners}
Let $Q$ be a generic primitive quadrilateral with two adjacent corners $p$ and $q$
at the ends of its side $pq$.
If the angles of $Q$ at both $p$ and $q$ are greater than 1 then its side $pq$ has order 1.
\end{lemma}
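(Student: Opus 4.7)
My plan is to argue by contradiction. Suppose the side $L=[p,q]$ has order at least $2$; then there is a lateral vertex $v$ on $L$ adjacent to $p$ (separated from $p$ by a single edge of $L$), and since $v$ has degree $3$ in $\Gamma$, a unique interior arc $\gamma$ of $\Gamma$ emanates from $v$ into $Q$. The arc $\gamma$ lies on the circle of $\P$ through the image of $v$ other than the one containing the image of $L$. I will exhaust the possibilities for the other endpoint of $\gamma$ and derive a contradiction in each.

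First, several easy cases can be dispatched immediately. By Remark~\ref{rmk-arcs}, $\gamma$ cannot terminate at a lateral vertex on the side of $Q$ opposite to $L$. If $\gamma$ ended at a lateral vertex on the side $M$ of $Q$ adjacent to $L$ at $p$, then $\gamma$ would be a two-sided arc with endpoints on the two sides meeting at $p$, and Lemma~\ref{two-sided} would force $p$ to have order $0$, contradicting the assumption that the angle at $p$ exceeds $1$. Symmetrically, $\gamma$ cannot end on the side of $Q$ adjacent to $L$ at $q$. Hence the other endpoint of $\gamma$ lies on $L$ itself (as another lateral vertex, or at $p$ or $q$) or at one of the corners $p'$, $q'$ opposite to $L$.

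I then handle the remaining possibilities as follows. If $\gamma$ is a one-sided arc with both endpoints lateral vertices of $L$, Lemma~\ref{one-sided} yields two separator arcs issuing from a common corner $c$ of $Q$ of order greater than $1$, whose other ends lie strictly between the endpoints of $\gamma$; because $L$ is adjacent to $p$ and $q$, the corner $c$ must be $p'$ or $q'$, and the two separators must then lie on two distinct circles of $\P$ both passing through $c$. If $\gamma$ ends at $p$ or $q$, then together with the corresponding segment of $L$ it encloses a spherical digon $D\subset Q$, and the equal-angle property of digons combined with the lateral-vertex angle at $v$ being less than $\pi$ determines the angular position of $\gamma$ at the end corner, pinning down the angle outside $D$ at that corner as an integer multiple of $\pi$. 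If $\gamma$ is a separator arc ending at $p'$ or $q'$, it partitions $Q$ into a spherical triangle $T$ containing the end corner $p$ or $q$ (necessarily of type $E_n$ with $n$ equal to the order of that corner, since $T$ has sides on three distinct circles and inherits the full angle at the end corner) and a sub-quadrilateral $Q_1$ in which the separator and the side of $Q$ opposite $L$ are adjacent sides lying on the same circle of $\P$, forcing an integer angle at the shared corner of $Q_1$.

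In each of these cases, combining the sub-polygon structure with the interior arcs at $p$ and $q$ forced by their orders being at least $1$ produces either a loop in $\Gamma$ (contradicting primitivity of $Q$) or a diagonal arc between two corners (contradicting irreducibility). I expect the main obstacle to be the separator case, where a careful examination of the net of $Q_1$ inherited from $Q$---tracking which circles of $\P$ the interior sub-arcs lie on and using the integer-angle constraint at the shared corner---is required to locate the forbidden loop or diagonal arc.
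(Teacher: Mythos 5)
Your overall strategy is genuinely different from the paper's: you follow the single interior arc $\gamma$ issuing from the lateral vertex $v$ of $pq$ nearest to $p$ and classify its terminal vertex, whereas the paper (see Fig.~\ref{fig:adjacent}) grows the net face by face outward from the side $pq$, repeatedly showing that the new vertices must be interior vertices until a diagonal arc joining $p$ to $q$ is forced, contradicting irreducibility. Your preliminary reductions are correct: Remark~\ref{rmk-arcs} does exclude a terminus on the side opposite to $pq$, and Lemma~\ref{two-sided} does exclude a terminus at a lateral vertex of either adjacent side, since that would force $p$ or $q$ to have order $0$. The problem is that everything after these reductions is a plan rather than a proof. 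For the one-sided case, the case where $\gamma$ ends at $q$, and the separator case, you describe some structure and then assert in one sentence that it ``produces either a loop \dots or a diagonal arc,'' without exhibiting either; you even state explicitly that the separator case still ``requires'' a careful examination you have not carried out. Since these are precisely the configurations where the combinatorial work of the lemma lives, the proof is incomplete.

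There is also a concrete error in the separator case as you set it up. If $\gamma$ runs from $v$ to the corner $c$ of $Q$ opposite to $q$, then the image of $c$ lies on exactly two circles of $\P$ (the configuration is generic), namely those carrying the two sides of $Q$ at $c$; hence the circle carrying $\gamma$ is one of these two, but it need not be the circle of the side opposite to $pq$ --- it may instead be the circle of the side $[q,c]$. In that subcase the triangle cut off by $\gamma$ has two of its three sides on the same circle, so it is not of type $E_n$ (its corner at $c$ is integer, while its corner at $q$ has order at least $1$, so it is not of type $T_n$ either and must in fact be reducible), and the sub-quadrilateral on the other side of $\gamma$ does not acquire the integer angle you claim. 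So even the partial structural claims you make do not cover all subcases. To salvage the argument you would need to resolve each terminal-vertex case down to an explicit loop or diagonal arc; the paper's direct expansion of the net near $pq$ avoids this branching entirely and is the more economical route.
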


\begin{proof}
Assume that $pq$ has order 2 (larger orders can be treated similarly).
Then the net of $Q$ contains two faces, one triangular and one quadrilateral, adjacent to $pq$.
We may assume that these two faces are $psr$ and $qrst$ (see Fig.~\ref{fig:adjacent}).
Note that the vertex $s$ cannot be a corner of $Q$ since it is connected to its corner $p$ by an interior arc.
Similarly, the vertex $t$ cannot be a corner of $Q$ as it is connected to $q$.
The vertex $s$ cannot be a boundary vertex of the net of $Q$ since $ps$ and $rs$ are interior arcs.
Thus $s$ is an interior vertex, and the net of $Q$ contains a quadrilateral face $puvs$ and a triangular face $svt$.
The vertex $t$ cannot be a boundary vertex since $st$ and $qt$ are interior arcs.
Thus $t$ is an interior vertex, and the net of $Q$ contains a quadrilateral face $tvwz$ and a triangular face $qtz$.
The vertex $v$ cannot be a corner of $Q$ as it is connected to $q$ by an interior arc.
It cannot be a boundary vertex, as $sv$ and $tv$ are interior arcs. Thus $v$ is an interior vertex,
and the net of $Q$ contains a triangular face $vuw$.
The same arguments as above show that both $u$ and $w$ should be interior vertices.
This contradicts irreducibility of $Q$, since $p$ and $q$ are connected by an interior arc $puwzq$.
\end{proof}

\begin{figure}
\centering
\includegraphics[width=4.8in]{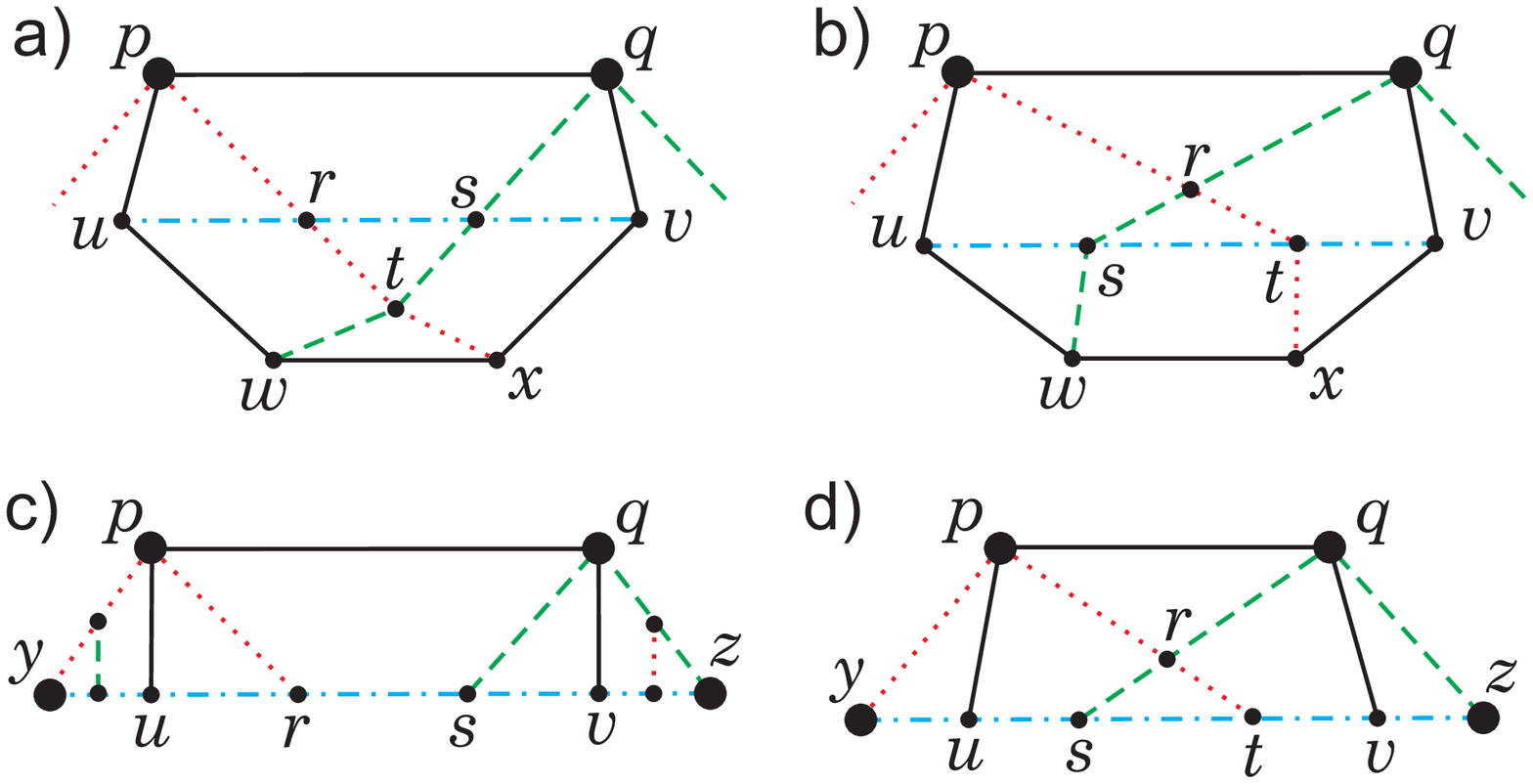}
\caption{Illustration for the proof of Lemma \ref{rs}.}\label{fig:rs}
\end{figure}

\begin{lemma}\label{rs}
Let $Q$ be a generic primitive quadrilateral with two corners $p$ and $q$
at the ends of its side $pq$, with both angles greater than 1.
Let $C$ be the circle of $\P$ to which the side of $Q$ opposite to $pq$ is mapped.
Then the net of $Q$ has no interior arcs mapped to $C$, and has the type either $R$ or $S$,
or one of their reflection symmetric quadrilaterals $\bar R$ and $\bar S$.
\end{lemma}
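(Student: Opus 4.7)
The plan is to begin by applying Lemma \ref{adjacentcorners}, which under the hypothesis that both angles at $p$ and $q$ exceed $1$ forces the side $pq$ to have order $1$, so it is a single edge of $\Gamma$ mapped to a circle $C_1$ of $\P$. Denote the four circles of $\P$ by $C_1, C_2, C_3, C$ according to the sides $pq$, $pq'$, $qp'$, and $L = p'q'$, where $p'$ is opposite $p$ and $q'$ opposite $q$. Since $p$ and $q$ have positive order, the remaining corners $p'$ and $q'$ must have order $0$.

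The central observation is that no edge of $\Gamma$ incident to a corner of $Q$ is mapped to $C$. Indeed, $p$ maps to the vertex $C_1\cap C_2$ of $\P$, so every edge of $\Gamma$ at $p$ is on the preimage of $C_1$ or $C_2$; similarly every edge at $q$ is on $C_1$ or $C_3$; while $p'$ and $q'$ have degree $2$ and contribute only their two boundary edges. Consequently, an interior arc mapped to $C$ cannot be a loop (excluded by primitivity), a diagonal (excluded by irreducibility), or a separator (its corner endpoint would require a $C$-edge at a corner). Only one-sided and two-sided interior $C$-arcs remain to be ruled out.

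For a hypothetical two-sided $C$-arc, Lemma \ref{two-sided} forces the shared corner of the two adjacent sides to have order $0$, hence to be $p'$ or $q'$; Lemma \ref{two-sided1} then produces two separator arcs with a common endpoint at a corner of positive order, that is, at $p$ or $q$, contradicting the observation above. An analogous argument using Lemma \ref{one-sided} rules out one-sided $C$-arcs, since that lemma would produce two separator arcs meeting at a corner of $Q$, again impossible. This proves the first assertion.

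To identify the type of the net, I analyze the face $F$ of $\Gamma$ inside $Q$ incident to the edge $pq$. If $F$ is a triangle, its third vertex lies at $C_2\cap C_3$, and because the interior edges at $p$ must alternate on $C_1, C_2$ and those at $q$ on $C_1, C_3$, the outward extension of the net is forced, layer by layer, to reproduce the basic quadrilateral $P_0$ (with central face $F$) extended by triangles $T_k$ and $T_l$ attached to its sides on $C$; this matches the $R_{kl}$ or $\bar R_{kl}$ pattern. If $F$ is a quadrilateral, its edge opposite $pq$ is mapped to $C$ and, by the first assertion just proved, must lie on the boundary side $L$; the analogous outward extension now builds $X'_{00}$ extended by $T_{k-1}$ and $T_l$ (or $T_k$ and $T_{l-1}$) into the $S_{kl}$ or $\bar S_{kl}$ pattern. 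The $R$/$\bar R$ and $S$/$\bar S$ distinction corresponds to the two possible orientations of $F$ relative to the cyclic order of the four circles of $\P$.

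The main obstacle I expect is the last step: verifying rigorously that the inductive outward growth of the net from $F$ is uniquely forced, with no other combinatorial possibility compatible with the orders $k, l$ at $p, q$, the alternation rule between triangular and quadrilateral faces, and the no-interior-$C$-arc conclusion already established. This requires a careful case analysis of the faces adjacent to each newly revealed interior edge as one builds outward toward $L$, repeatedly invoking the observation that interior edges at $p$ (resp.\ $q$) lie only on $C_1, C_2$ (resp.\ $C_1, C_3$).
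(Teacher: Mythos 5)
Your proposal has genuine gaps. First, the assertion that the corners $p'$ and $q'$ opposite to $p$ and $q$ ``must have order $0$'' is not justified: nothing proved before Lemma \ref{rs} rules out a primitive quadrilateral with three or four corners of positive order --- the claim that every primitive quadrilateral has at least two corners of order $0$ is itself a consequence of the classification being established. Since $p'$ maps to $C_3\cap C$ and $q'$ to $C_2\cap C$, a corner of positive order there could carry interior $C$-edges, so your ``central observation'' (and with it the exclusion of separator $C$-arcs) collapses without this assumption. Second, even granting it, the contradictions you claim for two-sided and one-sided $C$-arcs do not follow from that observation: the separator arcs produced by Lemmas \ref{two-sided1} and \ref{one-sided} emanate from a corner of positive order, hence are mapped to circles passing through $p$ or $q$, i.e.\ to $C_1$, $C_2$ or $C_3$ rather than to $C$, so their existence is perfectly consistent with ``no $C$-edge at a corner.'' (For the two-sided case a direct argument is available: a $C$-arc cannot terminate at a lateral vertex of the side $L$ itself, so its two ends would lie on the pair of sides meeting at $p$ or at $q$, and Lemma \ref{two-sided} already forbids that since those corners have positive order.)

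Third, and most seriously, the step you defer as ``the main obstacle'' is exactly where the paper's proof does its work. After Lemma \ref{adjacentcorners} gives order $1$ to $pq$, the paper examines the face $F$ adjacent to $pq$ (a quadrilateral $prsq$ or a triangle $prq$), notes that the $C$-arc bounding $F$ away from $pq$ has vertices that cannot be corners of $Q$, and shows by a short face-by-face propagation that if those vertices were interior vertices of $\Gamma$ the net would be forced to contain an interior arc joining $p$ to $q$, contradicting irreducibility; hence they are lateral vertices, the $C$-arc lies on the side opposite $pq$, and the remainder of $Q$ consists of two triangles $T_k$ and $T_l$, yielding type $R$/$\bar R$ or $S$/$\bar S$. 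The ``no interior $C$-arcs'' claim then falls out of this structure rather than being needed in advance. As written, your proposal neither correctly establishes the preliminary claim nor carries out the forcing argument, so it does not yet constitute a proof.
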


\begin{proof}
According to Lemma \ref{adjacentcorners}, the side $pq$ of $Q$ has order 1.
The face $F$ of the net $\Gamma$ of $Q$ adjacent to $pq$ may be either a quadrilateral $prsq$ (see Fig.~\ref{fig:rs}a) or a triangle $prq$ (see Fig.~\ref{fig:rs}b).

Consider first the case $F=prsq$. Note that its arc $rs$ is mapped to $C$.
Neither $r$ nor $s$ may be a corner of $Q$, since $r$ is connected to $p$ and $s$ is connected to $q$
by an interior arc of $\Gamma$. If one of these vertices, say $r$, is an interior vertex
of $\Gamma$, then $\Gamma$ contains the faces $pur$, $uwtr$ and $rts$.
It follows that $s$ is also an interior vertex of $\Gamma$, since it has two interior arcs $rs$ and $qs$
adjacent to it. Thus $\Gamma$ contains faces $ruwt$ and $stxv$.
Note that $t$ cannot be a corner of $Q$ since the two arcs intersecting at $t$ are preimages of the circles
of $\P$ corresponding to two opposite sides of $Q$. Since $rt$ and $st$ are interior arcs of $\Gamma$,
$t$ must be an interior vertex of $\Gamma$, and $twx$ is a face of $\Gamma$.
This implies that the arc $puwxvq$ of $\Gamma$ connects $p$ and $q$, contradicting irreducibility of $Q$.
Thus both $r$ and $s$ must be boundary vertices of $\Gamma$, and $ursv$ is part of the side
of $Q$ opposite $pq$. Extending this side till the corners $y$ and $z$ of $Q$ results in a quadrilateral
$R_{kl}$ or $\bar R_{kl}$, the union of $prsq$ and two triangles $T_k$ and $T_l$ (see Fig.~\ref{fig:rs}c
where $k=l=1$).

Next we consider the case when $F=prq$ is a triangle.
Since $r$ is not a corner of $Q$ (it is connected to both $p$ and $q$ by interior arcs)
it must be an interior vertex of $\Gamma$. Thus $\Gamma$ contains the faces $pusr$, $rst$ and $qrtv$
(see Fig.~\ref{fig:rs}b).
Note that neither $s$ nor $t$ may be corners of $Q$ (they are connected by interior arcs to $q$ and $p$, respectively).
If one of these vertices, say $s$, is an interior vertex of $\Gamma$ then $suw$ and $swxt$ are faces of $\Gamma$,
thus $t$ is also an interior vertex of $\Gamma$ (it has interior arcs $rt$ and $st$ adjacent to it)
and $txv$ is a face of $\Gamma$.
This implies that the arc $puwxvq$ of $\Gamma$ connects $p$ and $q$, contradicting irreducibility of $Q$.
Thus both $s$ and $t$ must be boundary vertices of $\Gamma$, and $ustv$ is part of the side
of $Q$ opposite $pq$. Extending this side till the corners $y$ and $z$ of $Q$ results in a quadrilateral
$S_{kl}$ or $\bar S_{kl}$, the union of $prq$ and two triangles $T_k$ and $T_l$ (see Fig.~\ref{fig:rs}c
where $k=l=1$ intersecting over $rst$).
\end{proof}

\begin{figure}
\centering
\includegraphics[width=4.8in]{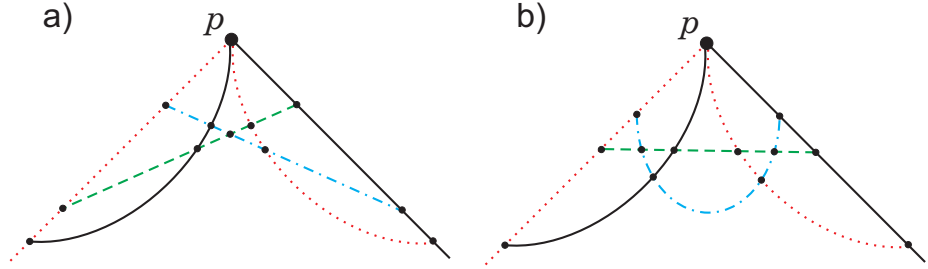}
\caption{Illustration for the proof of Lemma \ref{twocorners1}.}\label{angle1}
\end{figure}

\begin{figure}
\centering
\includegraphics[width=4.in]{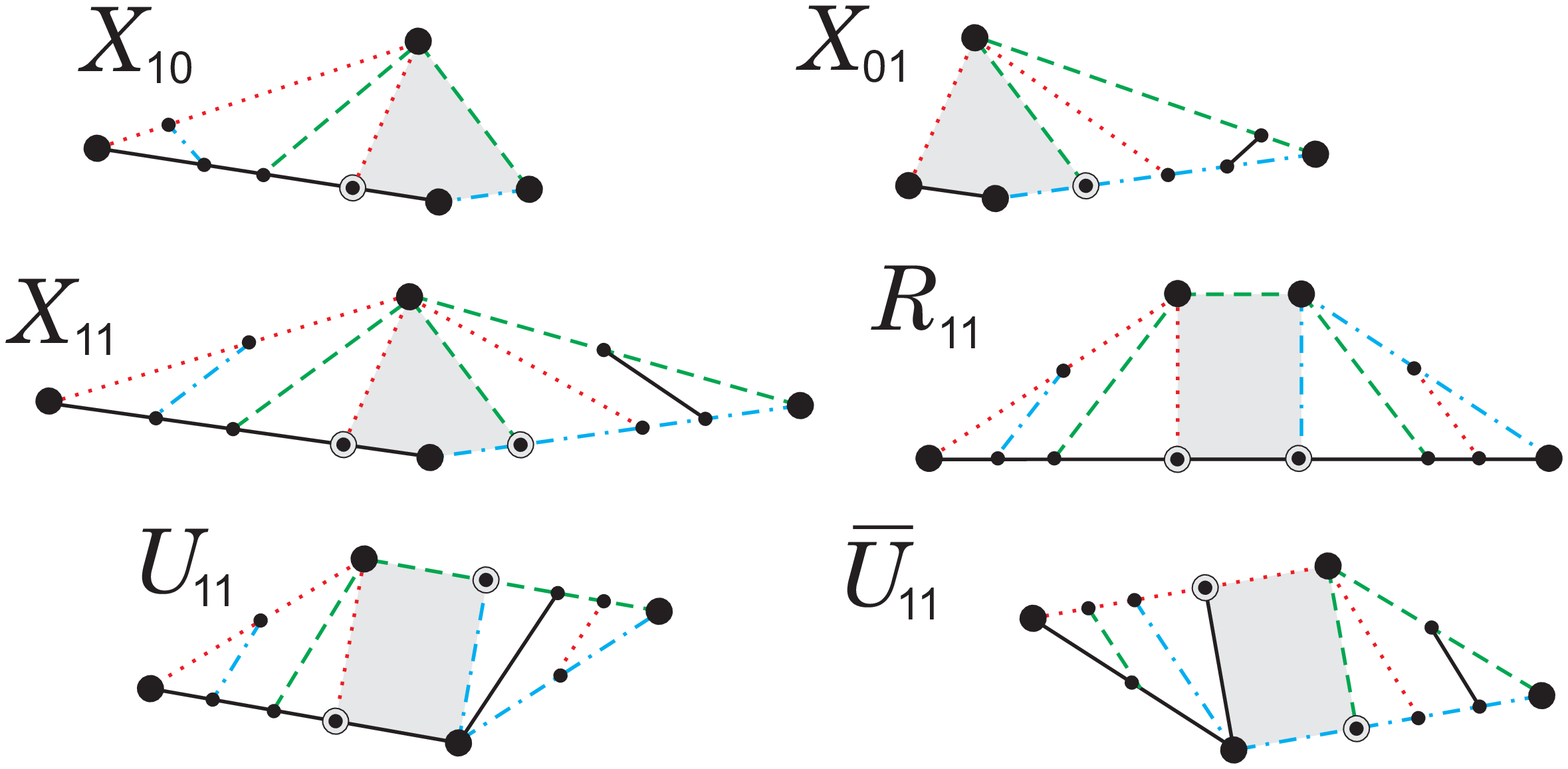}
\caption{Attaching one or two triangles $T_1$ to the quadrilateral $P_0$.}\label{boxt}
\end{figure}

\begin{figure}
\centering
\includegraphics[width=4.in]{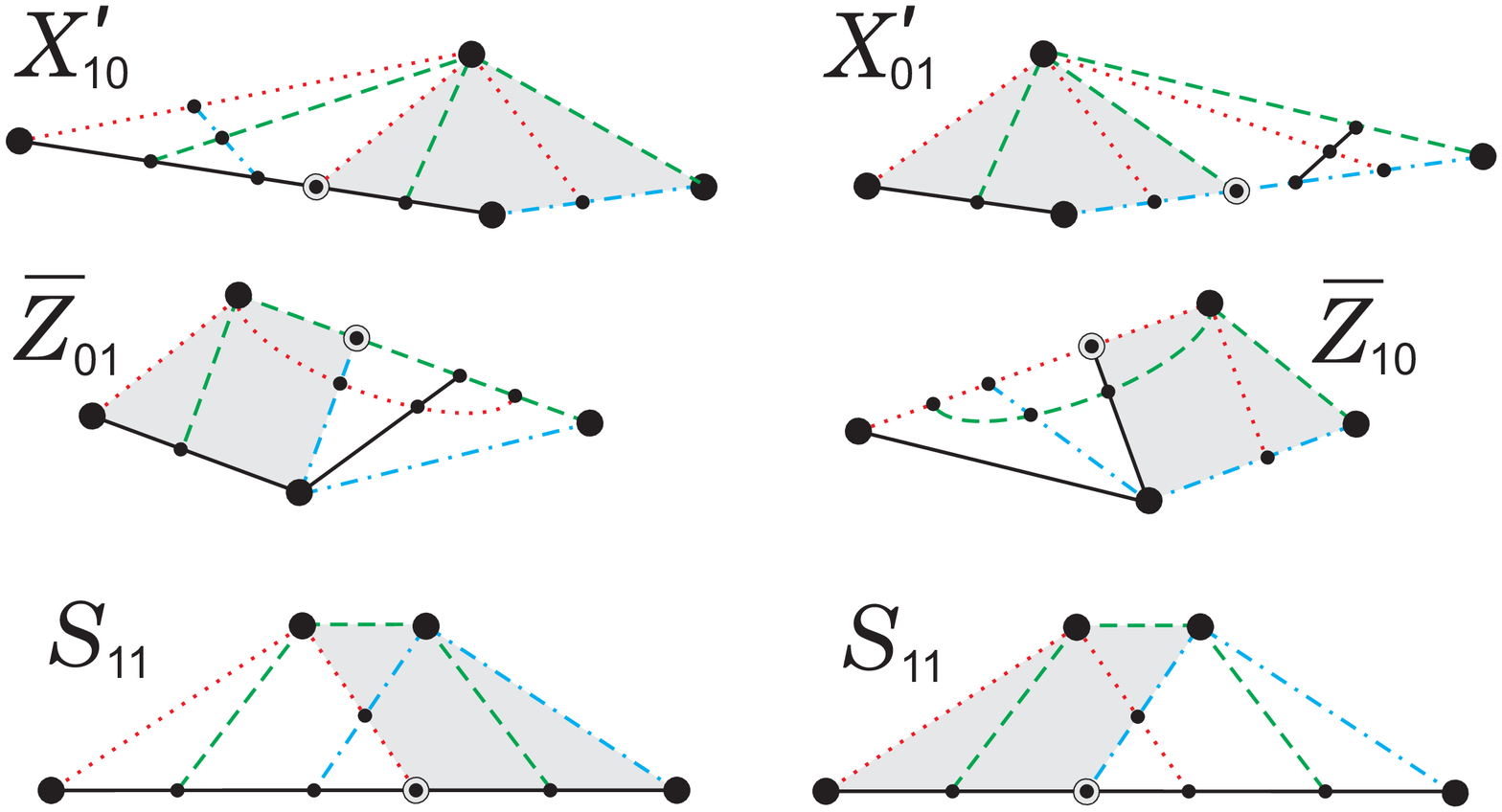}
\caption{Attaching a triangle $T_1$ to the quadrilateral $X'_{00}$.}\label{xprime}
\end{figure}

\begin{figure}
\centering
\includegraphics[width=4.8in]{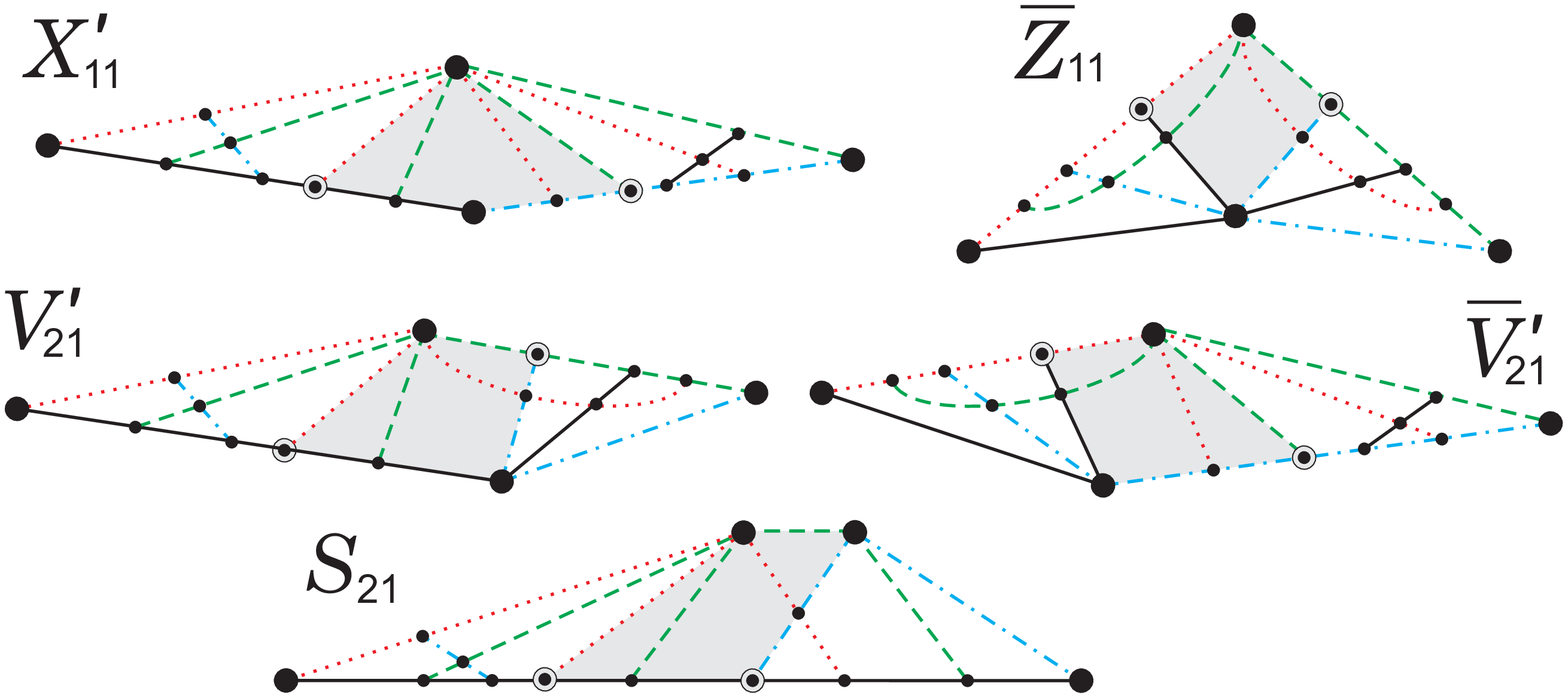}
\caption{Attaching two triangles $T_1$ to the quadrilateral $X'_{00}$.}\label{xprimet}
\end{figure}

\begin{figure}
\centering
\includegraphics[width=4.8in]{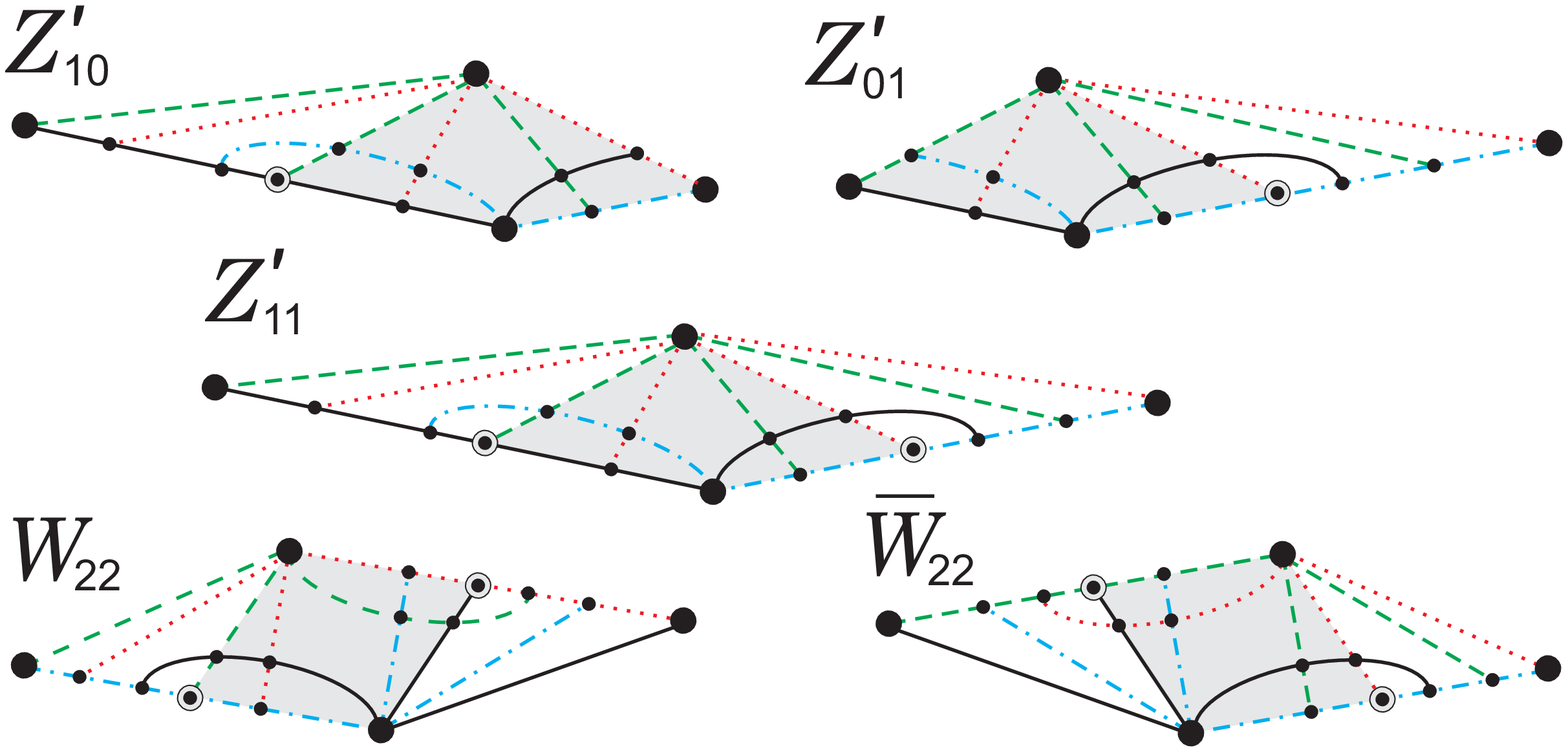}
\caption{Attaching one or two triangles $T_1$ to the quadrilateral $Z'_{00}$.}\label{zprime}
\end{figure}

\begin{lemma}\label{twocorners1}
Let $Q$ be a generic primitive quadrilateral with two opposite corners $p$ and $q$ of order $1$,
and two other corners of order $0$.
Then the net $\Gamma$ of $Q$ is one of the following: $Z'_{00}$, $Z_{01}$, $Z_{10}$, $\bar Z_{01}$, $\bar Z_{10}$, $U_{11}$, $\bar U_{11}$.
\end{lemma}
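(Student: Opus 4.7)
The plan is to analyze the local structure of the net $\Gamma$ at the two opposite order-$0$ corners $p'$ and $q'$ of $Q$. Exactly one face of $\Gamma$ is adjacent to each such corner; call these $F_{p'}$ and $F_{q'}$, each being either a triangle or a quadrilateral. The proof proceeds by case analysis on the unordered pair of types $(F_{p'}, F_{q'})$, which leaves only three possibilities up to the symmetry exchanging $p'$ and $q'$. The main tools are Lemma \ref{nodiagonals} (no arc joins opposite corners of $Q$), Lemmas \ref{two-sided} and \ref{two-sided1} (two-sided arcs at order-$0$ corners force additional separator structure), Lemma \ref{one-sided} together with Remark \ref{rmk-arcs} (controlling one-sided arcs and forbidding interior arcs with endpoints on opposite sides of $Q$), and the fact that each of the order-$1$ corners $p$ and $q$ has exactly three incident faces, alternating in type.

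Suppose first that both $F_{p'}$ and $F_{q'}$ are triangles. The third side of $F_{p'}$ is an interior arc with ends on the two sides of $Q$ adjacent to $p'$, and cannot be two-sided: by Lemma \ref{two-sided1} a two-sided arc at $p'$ would produce an additional face at one of $p,q$ bounded by two separator arcs, incompatible with the three-face count at that order-$1$ corner once the triangular $F_{p'}$ is already present. The same applies at $q'$. Hence each third side is a separator arc with corner endpoint at $p$ or $q$. Matching the separator structures on the two ends of $Q$ shows that the rest of $Q$ must consist of two triangles $T_1$ attached at opposite sides of the basic quadrilateral $P_0$ so that their integer corners lie at the two opposite corners $p,q$ of order $1$. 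This yields $U_{11}$ or $\bar U_{11}$, according to the two reflection types of the attachment.

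In the mixed case, where $F_{p'}$ is a triangle and $F_{q'}$ is a quadrilateral (the other mixed case being symmetric), the same two-sided exclusion gives a separator arc at $p'$ with corner endpoint at $p$ or $q$. The vertex opposite $q'$ in the quadrilateral $F_{q'}$ cannot be a corner of $Q$ (ruled out by Lemma \ref{nodiagonals} and by producing an incompatible face at an order-$1$ corner) and must therefore be an interior vertex of $\Gamma$. Working outward from this interior vertex and from the separator arc at $p'$ shows that the net is a single triangle $T_1$ attached to the basic quadrilateral $\bar X'_{00}$ at a side adjacent to its order-$0$ corner opposite to the order-$1$ corner, yielding one of $Z_{01}, Z_{10}, \bar Z_{01}, \bar Z_{10}$ depending on which adjacent side is extended and on the overall reflection type.

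Finally, when both $F_{p'}$ and $F_{q'}$ are quadrilaterals, I would show that the vertex $a$ opposite $p'$ in $F_{p'}$ must be an interior vertex of $\Gamma$. Indeed, $a$ cannot equal $q'$ by Lemma \ref{nodiagonals}; taking $a$ to be $p$ or $q$ would force the incident side of $Q$ to have order $1$ and then the three-face count at the adjacent order-$1$ corner forces a structure incompatible with $F_{p'}$ being quadrilateral; and taking $a$ to be a lateral vertex forces either an interior arc between opposite sides of $Q$ (excluded by Remark \ref{rmk-arcs}) or a one-sided arc at an order-$0$ corner (excluded by Lemma \ref{one-sided}). The analogous statement holds at $F_{q'}$, and primitivity of $Q$ then forces the two interior vertices to coincide, yielding exactly the basic quadrilateral $Z'_{00}$. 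The most delicate step throughout is the exclusion of two-sided arcs in the triangle-triangle case, where the three-face count at the order-$1$ corners must be tracked carefully to avoid overlooking a legitimate configuration.
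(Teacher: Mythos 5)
Your decomposition is genuinely different from the paper's: you split into cases according to whether the unique face at each of the two order-$0$ corners is a triangle or a quadrilateral, whereas the paper splits according to the presence of one-sided arcs and then according to whether the two separator arcs emanating from each order-$1$ corner land on the same side of $Q$ or on different sides. Your assignment of the three face-type cases to the seven nets ($(T,T)\to U_{11},\bar U_{11}$; mixed $\to Z_{01},Z_{10},\bar Z_{01},\bar Z_{10}$; $(Q,Q)\to Z'_{00}$) is correct, and the toolbox of lemmas you invoke is the right one. However, the execution has a concrete error in the $(Q,Q)$ case. The two interior vertices opposite the order-$0$ corners in their quadrilateral faces do \emph{not} coincide in $Z'_{00}$: that net has two separator arcs from $p$ landing on the two sides not adjacent to $p$, two from $q$ landing on the two sides not adjacent to $q$, and the two crossing points (one near each order-$0$ corner, where one separator from $p$ meets one from $q$) are distinct interior vertices. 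Indeed, if the two opposite vertices did coincide at a single interior vertex $v$, the two arcs passing through $v$ would each join lateral vertices on two sides of $Q$; such an arc is either an arc between opposite sides (excluded by Remark \ref{rmk-arcs}) or a two-sided arc enclosing one of the order-$1$ corners $p$ or $q$ (excluded by Lemma \ref{two-sided}, which forces the enclosed corner to have order $0$). So the configuration you describe is impossible, and it is not the net $Z'_{00}$. The correct continuation is to extend the four interior edges at the two (distinct) opposite vertices to maximal arcs and show each must terminate at $p$ or $q$, recovering the four separators of $Z'_{00}$.

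A second, softer gap is your exclusion of two-sided arcs in the $(T,T)$ case. Lemma \ref{two-sided1} only yields a face at an order-$\ge 1$ corner whose boundary contains segments of two separator arcs with a common end there; such faces genuinely occur at the order-$1$ corners of $U_{11}$ (both separators from $p$ land on the same side and bound a common face), so their existence is not by itself in conflict with a ``three-face count.'' To rule out the two-sided configuration you must use the stronger structural output of the proof of Lemma \ref{two-sided1} (the quadrilateral face beyond the two-sided arc, and the fact that \emph{both} interior arcs of the relevant order-$1$ corner are then consumed by separators landing on a single side), and then derive a contradiction with the assumed triangular face at the other order-$0$ corner, e.g.\ by counting the interior arcs still available at $p$ and $q$. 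As written, this step is asserted rather than proved.
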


\begin{proof}
If $Q$ does not have one-sided arcs, there are two separator arcs $\alpha$ and $\alpha'$ with a common end at $p$, and
two separator arcs $\beta$ and $\beta'$ with a common end at $q$.
If the other ends of $\alpha$ and $\alpha'$ are on different sides of $Q$ then
the same is true for $\beta$ and $\beta'$, thus the net of $Q$ is $Z'_{00}$.
If the other ends of $\alpha$ and $\alpha'$ are on the same side of $Q$ then
the same is true for $\beta$ and $\beta'$, thus the net of $Q$ is either $U_{11}$ or $\bar U_{11}$.

Next, $Q$ may have a one-sided arc $\gamma$ with one end at a corner of $Q$
(due to Lemma \ref{one-sided}, if both ends of $\gamma$ were not at corners of $Q$
then $Q$ would have a corner of order greater than 1, a contradiction).
Then the net of $Q$ is one of $Z_{01}$, $Z_{10}$, $\bar Z_{01}$, $\bar Z_{10}$.

If two one-sided arcs were adjacent to the same corner of $Q$, Lemma \ref{one-sided} would imply that there should be at least 4
separator arcs with the common end at the opposite corner, a contradiction.
Finally, having two one-sided arcs with the ends at opposite corners of $Q$ would imply that each of these
corners must have, in addition to a one-sided arc, two separator arcs with a common end in it, a contradiction.
This completes the proof of Lemma \ref{twocorners1}.
\end{proof}

\begin{lemma}\label{cut}
Let $Q$ be a generic primitive quadrilateral with a corner $p$ of order greater than 1.
Then there is a separator arc of the net $\Gamma$ of $Q$ partitioning $Q$ into a generic primitive quadrilateral $Q'$ smaller than $Q$ and an irreducible triangle $T$ with an integer corner.
\end{lemma}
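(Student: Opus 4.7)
The plan is to exhibit a separator arc emanating from $p$ and to verify that it cuts off an irreducible triangle of type $T_n$. Label the sides of $Q$ in cyclic order as $L_0, L_1, L_2, L_3$ so that $p$ is the common corner of $L_0$ and $L_3$, and let $C_1, C_2$ be the circles of $\P$ carrying $L_0, L_3$ respectively. Since $p$ has order $n \ge 2$, the net $\Gamma$ has $2n$ interior edges at $p$, alternating between $C_1$ and $C_2$ as one sweeps the angle at $p$; crucially, two interior $C_1$-edges meeting at $p$ must make an integer multiple of $\pi$ with one another, so the second interior edge from $L_0$, call it $e$, lies on $C_1$ and makes angle exactly $\pi$ with $L_0$ at $p$.

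Let $\sigma$ be the maximal $C_1$-arc through $e$ with $p$ as one endpoint. Primitivity of $Q$ forbids $\sigma$ from being a loop, and irreducibility together with Lemma~\ref{nodiagonals} forbids $\sigma$ from ending at another corner of $Q$; since a maximal $C_1$-arc can terminate at a lateral vertex only on a side not carried by $C_1$, the other endpoint $v$ of $\sigma$ must lie on one of the three sides $L_1, L_2, L_3$. In the favourable case $v \in L_1$, the arc $\sigma$ is a separator cutting off the triangle $T$ with corners $p, p_1, v$; by construction the angle of $T$ at $p$ equals $\pi$ (integer), while the other two angles are non-integer (by genericity of $Q$ and the lateral nature of $v$), so $T$ is of type $T_1$ and the complement $Q'$ is a smaller generic primitive quadrilateral. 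One then verifies that $T$ is irreducible: a diagonal arc inside $T$ would correspond to a separator arc from $p$ with angle strictly less than $\pi$, which is impossible since the only interior edge at $p$ strictly between $L_0$ and $e$ is the first interior edge, which is on $C_2$ and makes a non-integer angle with $L_0$.

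The remaining cases $v \in L_2$ and $v \in L_3$ are treated by a symmetric construction. If $v \in L_3$, then $\sigma$ is a one-sided arc on the side $L_3$ (with one endpoint at the corner $p$), and Lemma~\ref{one-sided} supplies a separator arc from another corner of $Q$ whose cut-off region, by an analogous angle argument, is of type $T_n$. If $v \in L_2$, one swaps the roles of $L_0$ and $L_3$: the interior edge $e'$ at $p$ closest to $L_3$ lies on $C_2$ and makes angle $\pi$ with $L_3$, and the maximal $C_2$-arc through $e'$ plays the role of $\sigma$; if this arc terminates on $L_2$ it produces a $T_1$ triangle with integer corner at $p$, and the remaining sub-cases (both arcs one-sided on sides adjacent to $p$, or both leading to non-integer-cornered triangles) reduce to Lemma~\ref{one-sided} as above. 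The main obstacle is carrying out this case analysis cleanly and ensuring that in every situation the cut-off triangle is irreducible of type $T_n$, rather than a reducible region or a triangle of type $E_n$ with all non-integer corners; establishing the absence of diagonal arcs inside the candidate triangle and the consequent identification with $T_n$ via the classification in Subsection~\ref{sub:triangles} is the technical heart of the argument.
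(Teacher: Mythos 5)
Your construction is genuinely different from the paper's and is sound up to a point: the interior edges at $p$ do alternate between $C_1$ and $C_2$, the second interior edge from $L_0$ does make angle exactly $\pi$ with $L_0$, and your elimination of loops, corners and $L_0$ as terminal points of the maximal $C_1$-arc $\sigma$ is correct. The gap is the terminal sub-case that you name but do not actually dispose of: when $\sigma$ lands on $L_2$ and the companion $C_2$-arc $\sigma'$ (through the second edge from $L_3$) lands on $L_1$. In that configuration each arc cuts off a triangle whose angle at $p$ is $\pi(\alpha_p-1)$, hence non-integer, and \emph{neither arc is one-sided}, so your parenthetical claim that the sub-case ``both leading to non-integer-cornered triangles'' reduces to Lemma~\ref{one-sided} has nothing to apply the lemma to. Nor is the sub-case obviously vacuous: $\sigma$ and $\sigma'$ leave $p$ in one cyclic order and would land on $\partial Q$ in the opposite order, so they would have to cross at interior vertices over $C_1\cap C_2$, which is not forbidden a priori and would itself require an argument to exclude.

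The paper closes exactly this hole by using \emph{all} $2n\ge 4$ interior arcs at $p$ rather than two of them. Its dichotomy is: if the net has a one-sided arc anywhere, Lemma~\ref{one-sided} yields two separator arcs from a single corner landing inside it, and the one leaving that corner at an integer multiple of $\pi$ cuts off the desired triangle; if there is no one-sided arc, then every interior arc at $p$ is a separator ending on $L_1$ or $L_2$, so two of them bounding a common face at $p$ end on the same side, and since the cumulative angles at $p$ are integer at the even-indexed edges (measured from $L_0$) and at the odd-indexed edges (measured from $L_3$), one of those two consecutive arcs cuts off a triangle with an integer corner at $p$. To salvage your version you would have to run the landing-site analysis over the whole family of even-indexed $C_1$-arcs and odd-indexed $C_2$-arcs and show they cannot all land on the ``wrong'' non-adjacent side, which is in substance the paper's counting argument. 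Incidentally, the step you single out as the ``technical heart'' --- irreducibility of the cut-off triangle --- is the easy part: the lateral endpoint $v$ has degree $3$ with its only interior edge on $\sigma$, so no diagonal arc of $T$ can end at $v$, and a $p$--$p_1$ arc is already excluded by irreducibility of $Q$; the real difficulty is the case analysis you left open.
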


\begin{proof} If the net $\Gamma$ has a one-sided arc $\gamma$ with both ends on the side $L$ of $Q$ then there are, according to Lemma \ref{one-sided}, two separator arcs of $\Gamma$ with a common end at a corner of $Q$ and the other ends on $L$ inside $\gamma$.
One of these two arcs partitions $Q$ into a quadrilateral with one integer corner and a triangle with all non-integer corners, and another one partitions $Q$ into a generic quadrilateral $Q'$ smaller than $Q$ and a triangle $T$
with an integer corner. Thus we may assume that $\Gamma$ does not have any one-sided arcs.
In this case, there are at least four separator arcs with a common end at the corner $p$ of $Q$.
At least two of these arcs must be at the boundary of the same face of $\Gamma$ adjacent to $p$
and have their other ends on the same side of $Q$. Then one of these two arcs partitions $Q$ into a quadrilateral with one integer corner and a triangle with all non-integer corners, and another one partitions $Q$ into a generic quadrilateral $Q'$ smaller than $Q$ and a triangle $T$ with an integer corner. This completes the proof of Lemma \ref{cut}.
\end{proof}

\medskip
\begin{proof}[Proof of Theorem \ref{primitive}]
According to Lemma \ref{cut}, every generic primitive quadrilateral with a corner of order greater than 1 can be partitioned into a smaller generic quadrilateral and a triangle along one of the separator arcs of its net.
This implies that every generic primitive quadrilateral $Q$ can be obtained by attaching triangles
with integer corners to a quadrilateral $Q'$ with all corners of order at most 1.
If all corners of $Q'$ have order 0 then $Q'$ is the quadrilateral $P_0$ (see Corollary \ref{empty}).
If only one corner of $Q'$ has order 1, the other three corners having order 0, then the net of $Q'$ is one of
$X_{01}$, $X_{10}$, $X'_{00}$, $\bar X_{01}$, $\bar X_{10}$, $\bar X'_{00}$ according to Lemma \ref{onebigangle}.
Note that each of the quadrilaterals $X_{01}$, $X_{10}$, $\bar X_{01}$ and $\bar X_{10}$ can be partitioned into the quadrilateral $P_0$ and a triangle $T_1$.
If two adjacent corners of $Q'$ have order 1 then the net of $Q'$ is either $R_{11}$ or $S_{11}$ according to Lemma \ref{rs}.
If two opposite corners of $Q'$ have order 1 then the net of $Q'$ is either $Z'_{00}$ or one of
$Z_{01}$, $Z_{10}$, $\bar Z_{01}$, $\bar Z_{10}$, $U_{11}$, $\bar U_{11}$, according to Lemma \ref{twocorners1}.
Note that each of the latter six quadrilaterals can be further reduced to either $P_0$ or $X'_{00}$ or $\bar X'_{00}$.

There are eight options for attaching a triangle $T$ with an integer corner to the quadrilateral $P_0$.
The vertex of $T$ can be placed at any of the four corners of the quadrilateral, and the base of $T$ at the extension of one of its two sides opposite to that corner.
Combining these options would result in quadrilaterals $X_{kl}$, $\bar X_{kl}$, $R_{kl}$, $\bar R_{kl}$, $U_{kl}$ and $\bar U_{kl}$ (see Fig.~\ref{boxt}).

There are six options for attaching a triangle $T$ with an integer corner to the quadrilateral $X'_{00}$ (and to $\bar X'_{00}$).
The vertex of $T$ can be placed at the corner $p$ of order 1, and the base of $T$ at the extension
of one of its two sides opposite to $p$. Alternatively, the vertex of $T$ can be placed at the corner $q$ opposite $p$ and the base of $T$ at the extension of one of the two sides adjacent to $p$.
Finally, the vertex of $T$ can be placed at one of the corners $u$ and $v$ other than $p$ and $q$, and the base of $T$ at the extension of the side of order 2 not adjacent to the corner where the vertex of $T$ is placed.
Fig.~\ref{xprime} shows six options of attaching the triangle $T_1$ to the quadrilateral $X'_{00}$.
Attaching $T_1$ to $\bar X'_{00}$ corresponds to replacing all quadrilaterals in Fig.~\ref{xprime} by their reflections preserving the corners of order 0.
Combining these options would result in quadrilaterals $X'_{kl}$, $\bar X'_{kl}$, $Z_{kl}$, $\bar Z_{kl}$, $V_{kl}$, $\bar V_{kl}$, $V'_{kl}$, $\bar V'_{kl}$, $S_{kl}$, $\bar S_{kl}$ (see Fig.~\ref{xprimet}).

There are four options to attach a triangle $T$ to the quadrilateral $Z'_{00}$. The vertex of $T$ can be placed at one of the corners of order 1 of the quadrilateral, and the base of $T$ at the extension of one of its two sides opposite to that corner.
Combining these options would result in quadrilaterals $Z'_{kl}$, $\bar Z'_{kl}$, $W_{kl}$ and $\bar W_{kl}$ (see Fig.~\ref{zprime}).

Summing up, all primitive quadrilaterals that can be obtained by attaching at most two irreducible triangles, each with an integer corner, to one of the basic quadrilaterals $P_0$, $X'_{00}$, $\bar X'_{00}$ and $Z'_{00}$, appear in the list of primitive quadrilaterals in Theorem \ref{primitive}. This completes the proof of Theorem \ref{primitive}.
\end{proof}

\begin{cor}\label{irreducible}
Every generic irreducible quadrilateral is either primitive or is obtained from a primitive quadrilateral $Q$ listed in Theorem \ref{primitive}, of type other than $R$, $S$, $\bar R$ and $\bar S$, by replacing a quadrilateral face of the net of $Q$ adjacent to its two opposite corners with the quadrilateral $P_\mu$, for some $\mu>0$ (see Definition \ref{pmu}).
\end{cor}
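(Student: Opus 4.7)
The plan is to induct on the number of loops in the net of the original irreducible quadrilateral. The base case is vacuous: if $Q$ has no loops then, being irreducible, $Q$ is primitive and the statement holds. For the inductive step, assume $Q$ is irreducible and contains at least one loop. By Lemma~\ref{p1}, $Q$ contains a subcomplex $R$ combinatorially equivalent to $P_1$, whose two designated corners (playing the role of $p$ and $q$ in Fig.~\ref{pseudo-diagonal}) are opposite corners of $Q$.

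Form $Q''$ from $Q$ by replacing $R$ with a single quadrilateral face $F$ having the same boundary as $R$, i.e., by $P_0$. Then $Q''$ has the same four corners as $Q$, with the orders of the two designated corners each decreased by $2$, and strictly fewer loops (the four loops in the pseudo-diagonal of $R$ are removed). Moreover, $Q''$ is still irreducible: any diagonal arc of $Q''$ must lie outside $F$ and would then be a diagonal arc of $Q$, contradicting the irreducibility of $Q$. Apply the inductive hypothesis to $Q''$: either $Q''$ is primitive, or $Q''$ is obtained from a primitive quadrilateral $Q'$ of type other than $R$, $S$, $\bar R$, $\bar S$ by inserting $P_\nu$ for some $\nu\ge 1$ into its unique face carrying two opposite corners on its boundary.

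In the first case, set $Q'=Q''$ and $\mu=1$. The created face $F$ of $Q''$ contains $p$ and $q$ as opposite corners of $Q''$, so $Q'$ admits a quadrilateral face of its net carrying two opposite corners on its boundary. Inspection of Figures~\ref{4circles-x}--\ref{4circles-uvw} shows that among the primitive types this property fails precisely for $R$, $S$, $\bar R$, $\bar S$ (in those types the two corners of positive order are adjacent, and no face of the net has two opposite corners of the quadrilateral among its vertices), so $Q'$ is of an admissible type, and $Q$ is recovered by replacing $F$ with $P_1\cong R$. In the second case, $F$ must be one of the $\nu+1$ quadrilateral faces of the $P_\nu$ subcomplex of $Q''$ that carry the designated $p,q$ as opposite vertices (these are the only faces of $Q''$ containing two opposite corners of $Q''$). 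Replacing $F$ by $R\cong P_1$ then converts the $P_\nu$ subcomplex into a $P_{\nu+1}$ subcomplex, by the recursive Definition~\ref{pmu}; hence $Q$ is obtained from the same primitive $Q'$ by inserting $P_{\nu+1}$ in its unique face with two opposite corners, completing the inductive step with $\mu=\nu+1$.

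The main obstacle is the geometric verification at the primitive base case: one must show that the types $R$, $S$, $\bar R$, $\bar S$ are exactly those primitive types whose net contains no quadrilateral face with two opposite corners of the quadrilateral on its boundary, and that every other type in the list of Theorem~\ref{primitive} admits exactly one such face. This is a direct check from Figures~\ref{basic}--\ref{4circles-uvw}, using that in every non-$R$, non-$S$ type the construction produces a distinguished central face inherited from one of the basic quadrilaterals $P_0$, $X'_{00}$, $\bar X'_{00}$, $Z'_{00}$ whose opposite vertices become opposite corners of the assembled quadrilateral. The remaining combinatorial point, that the $P_1$-contraction introduces no new interior arcs and that $F$ fits the recursive description of $P_{\nu+1}$ inside $P_\nu$, is immediate from Definition~\ref{pmu}.
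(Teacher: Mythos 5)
Your proposal is correct and follows essentially the same route as the paper: the paper's proof also invokes Lemma \ref{p1} to locate a copy of $P_1$, replaces it by $P_0$, and repeats $\mu$ times until a primitive quadrilateral is reached, then observes that this primitive quadrilateral retains a quadrilateral face adjacent to two opposite corners and hence cannot be of type $R$, $S$, $\bar R$ or $\bar S$. Your version merely formalizes the iteration as an induction on the number of loops and spells out details (irreducibility of the contracted quadrilateral, the identification of the contracted face within $P_\nu$) that the paper leaves implicit.
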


\begin{proof} If $Q$ is an irreducible quadrilateral that is not primitive then, according to Lemma \ref{p1}, $Q$ contains a quadrilateral $P_1$ (see Fig.~\ref{pseudo-diagonal}).
In particular, the net of $Q$ has a quadrilateral face adjacent to two of its opposite corners.
Replacing $P_1$ with $P_0$, we obtain a smaller quadrilateral. We can repeat this operation $\mu$ times
until we get a primitive quadrilateral $Q'$. Since $Q'$ still has a quadrilateral face adjacent to two of its opposite corners, it should belong to one of the types listed in Theorem \ref{primitive} other than $R$, $S$, $\bar R$ and $\bar S$.
The original quadrilateral $Q$ is obtained from $Q'$ by replacing its quadrilateral face adjacent to two of its opposite corners with the quadrilateral $P_\mu$, as stated in Theorem \ref{irreducible}.
\end{proof}

\medskip
\noindent{\bf Notation.} The irreducible quadrilaterals obtained from the primitive quadrilaterals
$X_{kl}$, $\bar X_{kl}$, $X'_{kl}$, $\bar X'_{kl}$, $Z_{kl}$, $\bar Z_{kl}$, $Z'_{kl}$, $\bar Z'_{kl}$, $U_{kl}$, $\bar U_{kl}$, $V_{kl}$, $\bar V_{kl}$, $V'_{kl}$, $\bar V'_{kl}$, $W_{kl}$, $\bar W_{kl}$ by replacing a quadrilateral face of their net by the quadrilateral $P_\mu$
are denoted $X^\mu_{kl}$, $\bar X^\mu_{kl}$, $X'^\mu_{kl}$, $\bar X'^\mu_{kl}$, $Z^\mu_{kl}$, $\bar Z^\mu_{kl}$, $Z'^\mu_{kl}$, $\bar Z'^\mu_{kl}$, $U^\mu_{kl}$, $\bar U^\mu_{kl}$, $V^\mu_{kl}$, $\bar V^\mu_{kl}$, $V'^\mu_{kl}$, $\bar V'^\mu_{kl}$, $W^\mu_{kl}$, $\bar W^\mu_{kl}$, respectively.
The original primitive quadrilaterals are assigned the same notation with $\mu=0$.

\section{Classification of nets of generic spherical quadrilaterals}\label{generic}

\begin{thm}\label{quad} Any generic spherical quadrilateral can be obtained by attaching spherical digons of types either $D_{15}$ or $D_{24}$ to some of the short (of order less than 6) sides of an irreducible quadrilateral $Q_0$. The types of digons attached to the sides of $Q_0$ are completely determined by its net.
\end{thm}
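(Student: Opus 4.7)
The plan is to induct on the number of interior arcs in the net of $Q$, peeling off one digon at each step until only an irreducible quadrilateral remains. The base case is immediate: if $Q$ is already irreducible, take $Q_0=Q$ with no digons attached.

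For the inductive step, choose any diagonal arc $\gamma$ of the net of $Q$. By Lemma~\ref{nodiagonals}, its endpoints must be two adjacent corners $p,q$ of $Q$, so $\gamma$ together with the side $L=[p,q]$ encloses a subregion $D\subset Q$ that is a spherical digon with corners $p$ and $q$. My first task is to identify the type of $D$. The type $D_{33}$ is ruled out by the Remark following Theorem~\ref{digon}: a $D_{33}$-digon would force the two circles containing $L$ and $\gamma$ to meet at both $p$ and $q$, incompatible with the genericity of $Q$. Hence $D$ is of type $D_{15}$ or $D_{24}$, and both its sides map to a single circle of $\P$.

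I then remove $D$ to obtain a smaller quadrilateral $Q'$ whose side in place of $L$ is $\gamma$. Since $L$ and $\gamma$ lie on the same circle of $\P$, the four-circle configuration of $Q'$ coincides with that of $Q$, so $Q'$ is again a generic spherical quadrilateral with strictly fewer interior arcs. Applying the induction hypothesis to $Q'$, I obtain $Q'=Q_0\cup\{\text{digons}\}$ with $Q_0$ irreducible and all digons of type $D_{15}$ or $D_{24}$ attached to short sides of $Q_0$. Reattaching $D$ yields the desired decomposition of $Q$: either $\gamma$ is a side of $Q_0$ itself, in which case $D$ is glued directly onto that short side, or $\gamma$ is the external side of an existing digon tower, in which case $D$ is stacked on top of it. Theorem~\ref{digon} guarantees compatibility in the latter case, since $D_{15}$ and $D_{24}$ digons cannot be mixed, and the sort-matching rules follow automatically from the shared side.

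The main obstacle is the verification of the short-side condition and of the rigidity of the digon types. For the short-side claim: an irreducible $D_{15}$ (resp.~$D_{24}$) has sides of orders $1$ and $5$ (resp.~$2$ and $4$), together exhausting a full circle of $6$ edges; consequently, the root side of $Q_0$ beneath any digon tower has order in $\{1,2,4,5\}$, strictly less than $6$, so it is short. For the type claim: the net of $Q_0$ records, for each of its sides, both its order and the circle of $\P$ to which it is mapped; since $D_{15}$ and $D_{24}$ attach only to sides of orders in $\{1,5\}$ and $\{2,4\}$ respectively, the type of the tower at each short side of $Q_0$ is rigidly determined by the order alone, and the sort alternation within the tower is then forced by Theorem~\ref{digon}.
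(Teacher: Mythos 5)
Your proposal is correct and follows essentially the same route as the paper: Lemma~\ref{nodiagonals} forces any diagonal arc to join adjacent corners, the digon it cuts off must have integer angles (a non-integer digon would put both corners on the intersection of the same two circles, violating genericity), hence it is of type $D_{15}$ or $D_{24}$, and one peels it off and recurses. The paper states this in one paragraph and leaves the induction, the short-side bookkeeping, and the determination of digon types by side order implicit; your write-up simply makes those routine verifications explicit.
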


\begin{proof} According to Lemma \ref{nodiagonals}, any diagonal arc $\gamma$ of a reducible quadrilateral $Q$ must have its ends at adjacent corners $p$ and $q$ of $Q$. Thus $\gamma$ partitions $Q$ into a digon $D$ and a quadrilateral $Q'$. Note that $D$ must have integer angles at its corners. Otherwise,
the common side of $D$ and $Q'$ would be mapped to a circle $C'$ different from the circle $C$ to which the common side of $D$ and $Q$ is mapped.
This would imply that $p$ and $q$ are mapped to the intersection of the same two circles $C$ and $C'$, which is not possible since $Q$ is a generic quadrilateral.
Thus $D$ should be of type either $D_{15}$ (when the order $k$ of $\gamma$ is odd) or $D_{24}$ (when $k$ is even).
\end{proof}

\smallskip
\begin{rmk}\label{non-unique}\normalfont
An irreducible quadrilateral $Q_0$ in Theorem \ref{quad} may be not unique.
If an irreducible quadrilateral $Q_0$ has a short side $L$ of order greater than 3,
and the quadrilateral $Q$ obtained by attaching a disk $D$ to $L$
contains a disk $D'$ other than $D$, with part of the boundary of $D'$ being at a side
of $Q$, then $D'$ can be removed from $Q$ to obtain an irreducible quadrilateral $Q_1$.
All such non-uniqueness cases are listed below. All other cases can be obtained from these
by attaching digons or pseudo-diagonals.
Note that a quadrilateral $Q$ obtained by attaching a disk $D$ to a side of order less than $3$ of an irreducible quadrilateral $Q_0$ does not contain a disk other than $D$ having part of its boundary on the side of $Q$.
\end{rmk}

\subsection{Non-uniqueness cases in Remark \ref{non-unique}}\label{sub:non-unique}

(a) The quadrilateral $S_{11}\cup D_{15}$ (see Fig.~\ref{xs}a) contains three more disks. Removing each of them results in a quadrilateral $S_{11}$.

(b) The quadrilateral $X_{01}\cup D_{24}$ (see Fig.~\ref{xs}b) contains one more disk $D_{24}$. Removing it results in a quadrilateral $X_{10}$ (see Fig.~\ref{xs}c). In the opposite direction, removing a disk from $X_{10}\cup D_{24}$ may result in $X_{01}$.

(c) The quadrilateral $X_{1k}\cup D_{24}$ with $k>0$ contains one more disk $D_{24}$ (shaded in  Fig.~\ref{xu}a). Removing it results in a quadrilateral $U_{k1}$. In the opposite direction, removing a disk from $U_{k1}\cup D_{24}$ results in $X_{1k}$. Similarly, removing a disk from $X_{k1}\cup D_{24}$ with $k>0$ (shaded in Fig.~\ref{xu}b) results in $\bar U_{k1}$,
    and removing a disk from $\bar U_{k1}\cup D_{24}$ results in $X_{k1}$.
    Note that $X_{11}$ allows to attach a disk $D_{24}$ to any of its two sides of order 4.
    Removing a disk from $X_{11}\cup D_{24}$ results either in $U_{11}$ or in $\bar U_{11}$, depending on the side of $X_{11}$ to which the disk is attached (see Figs.~\ref{xu}a and \ref{xu}b).
    The cases $\bar X_{1k}\cup D_{24}$ and $\bar X_{k1}\cup D_{24}$ are obtained by reflection symmetry preserving the opposite corners of order 0.

(d) The quadrilateral $X'_{1k}\cup D_{15}$ contains a disk $D_{24}$ (shaded in Fig.~\ref{xu}c). Removing it results in a quadrilateral $V_{k+1,1}$ (a quadrilateral $\bar Z_{10}$ when $k=0$, see Fig.~\ref{xu}d). In the opposite direction, removing a disk $D_{15}$ from $V_{k+1,1}\cup D_{24}$ (from $\bar Z_{10}\cup D_{24}$ if $k=0$) results in $X'_{1k}$. Similarly, removing a disk $D_{24}$ (shaded in Fig.~\ref{xu}e) from $X'_{k1}\cup D_{15}$ results in $\bar V_{k+1,1}$ (in $\bar Z_{01}$ if $k=0$, see Fig.~\ref{xu}f), and removing a disk $D_{15}$ from $\bar V_{k+1,1}\cup D_{24}$ (from $\bar Z_{01}$ if $k=0$) results in $X'_{k1}$. Note that $X'_{11}$ allows to attach a disk $D_{15}$ to any of its two sides of order 5. Removing a disk $D_{24}$ from $X'_{11}\cup D_{15}$  results either in $V_{21}$ or in $\bar V_{21}$, depending on the side of $X'_{11}$ to which the disk is attached (see Figs.~\ref{xu}c and \ref{xu}e).
    The cases $\bar X'_{1k}\cup D_{24}$ and $\bar X'_{k1}\cup D_{24}$ are obtained by reflection symmetry preserving the opposite corners of order 0.

(e) The quadrilateral $Z_{1k}\cup D_{24}$ contains disk $D_{15}$ (shaded area in Fig.~\ref{zv}a). Removing it results in a quadrilateral $V_{k2}$ (in $\bar X'_{10}$ if $k=0$, see Fig.~\ref{zv}b). In the opposite direction, removing a disk $D_{24}$ from $V_{k2}\cup D_{15}$ (from $\bar X'_{10}$ if $k=0$) results in $Z_{1k}$. Similarly, removing a disk $D_{15}$ (shaded in Fig.~\ref{zv}c) from $Z_{k1}\cup D_{24}$ results in $\bar V_{k2}$ (in $\bar X'_{01}$ if $k=0$, see Fig.~\ref{zv}d),
    and removing a disk $D_{24}$ from $\bar V_{k2}\cup D_{15}$ (from $\bar X'_{01}$ if $k=0$) results in $Z_{1k}$. Note that $Z_{11}$ allows to
    attach a disk $D_{24}$ to any of its two sides of order 4 (see Figs.~\ref{zv}a and \ref{zv}c). Removing a disk $D_{15}$ from $Z_{11}\cup D_{24}$ results either in $V_{12}$ or in $\bar V_{12}$, depending on the side of $Z_{11}$ to which the disk is attached. Also, $V_{12}$ (resp., $\bar V_{12}$) has one side of order 4 and another one of order 5. Thus either $D_{24}$ or $D_{15}$ can be attached to a side of $V_{12}$ (resp., $\bar V_{12}$). Removing either $D_{15}$ or $D_{24}$ would result in either $X'_{11}$ or $Z_{11}$.
    The cases $\bar Z_{1k}\cup D_{24}$ and $\bar Z_{k1}\cup D_{24}$ are obtained by reflection symmetry preserving the opposite corners of order 0.

(f) The quadrilateral $Z'_{01}\cup D_{15}$ contains another disk $D_{15}$. Removing it results in a quadrilateral $\bar Z'_{01}$. In the opposite direction, removing a disk $D_{15}$ from $\bar Z'_{01}\cup D_{15}$ results in $Z'_{01}$.
    The cases $Z'_{10}\cup D_{15}$ and $\bar Z'_{10}\cup D_{15}$ are obtained by reflection symmetry exchanging the opposite corners of order 0.

(g) The quadrilateral $Z'_{1k}\cup D_{15}$ for $k>0$ contains another disk $D_{15}$ (shaded in Fig.~\ref{zv}e). Removing it results in a quadrilateral $\bar W_{k+1,2}$. In the opposite direction, removing a disk $D_{15}$ from $\bar W_{k+1,2}\cup D_{15}$ results in $Z'_{1k}$. Similarly, removing a disk $D_{15}$ from $Z'_{k1}\cup D_{15}$ for $k>0$ results in $W_{k+1,2}$, and removing a disk $D_{15}$ from $W_{k+1,2}\cup D_{15}$ results in $Z'_{k1}$. Note that $Z'_{11}$ allows to attach a disk $D_{15}$ to any of its two sides of order 5. Removing a disk $D_{15}$ from $Z'_{11}\cup D_{15}$ results either in $W_{22}$ or in $\bar W_{22}$, depending on the side of $Z'_{11}$ to which the disk is attached.
    The cases $\bar Z'_{1k}\cup D_{15}$ and $\bar Z'_{k1}\cup D_{15}$ are obtained by reflection symmetry preserving the opposite corners of order 0.

\begin{figure}
\centering
\includegraphics[width=4.8in]{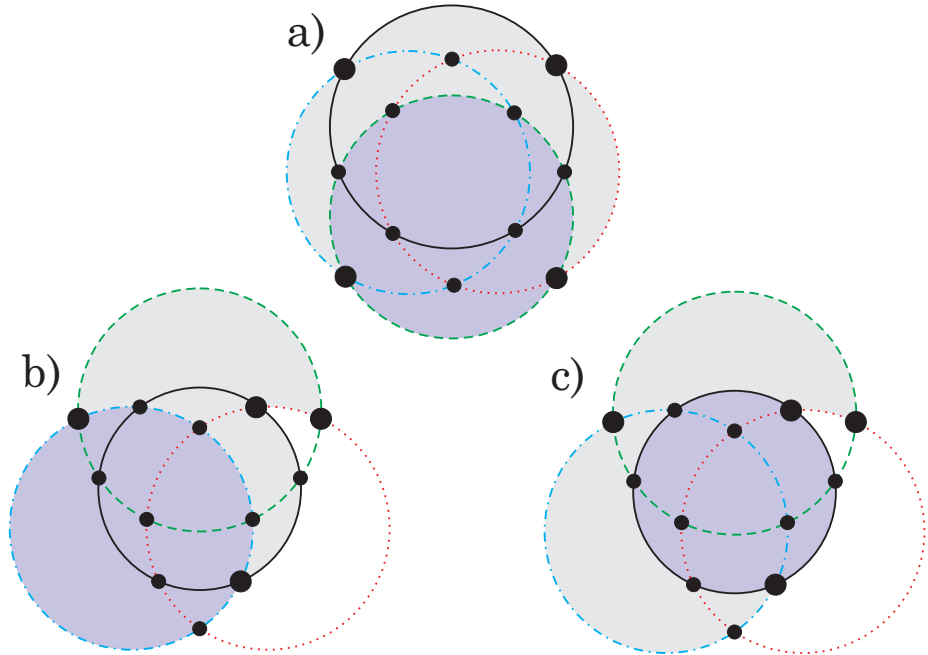}
\caption{Non-uniqueness of an irreducible quadrilateral, cases (a) and (b).}\label{xs}
\end{figure}

\begin{figure}
\centering
\includegraphics[width=4.8in]{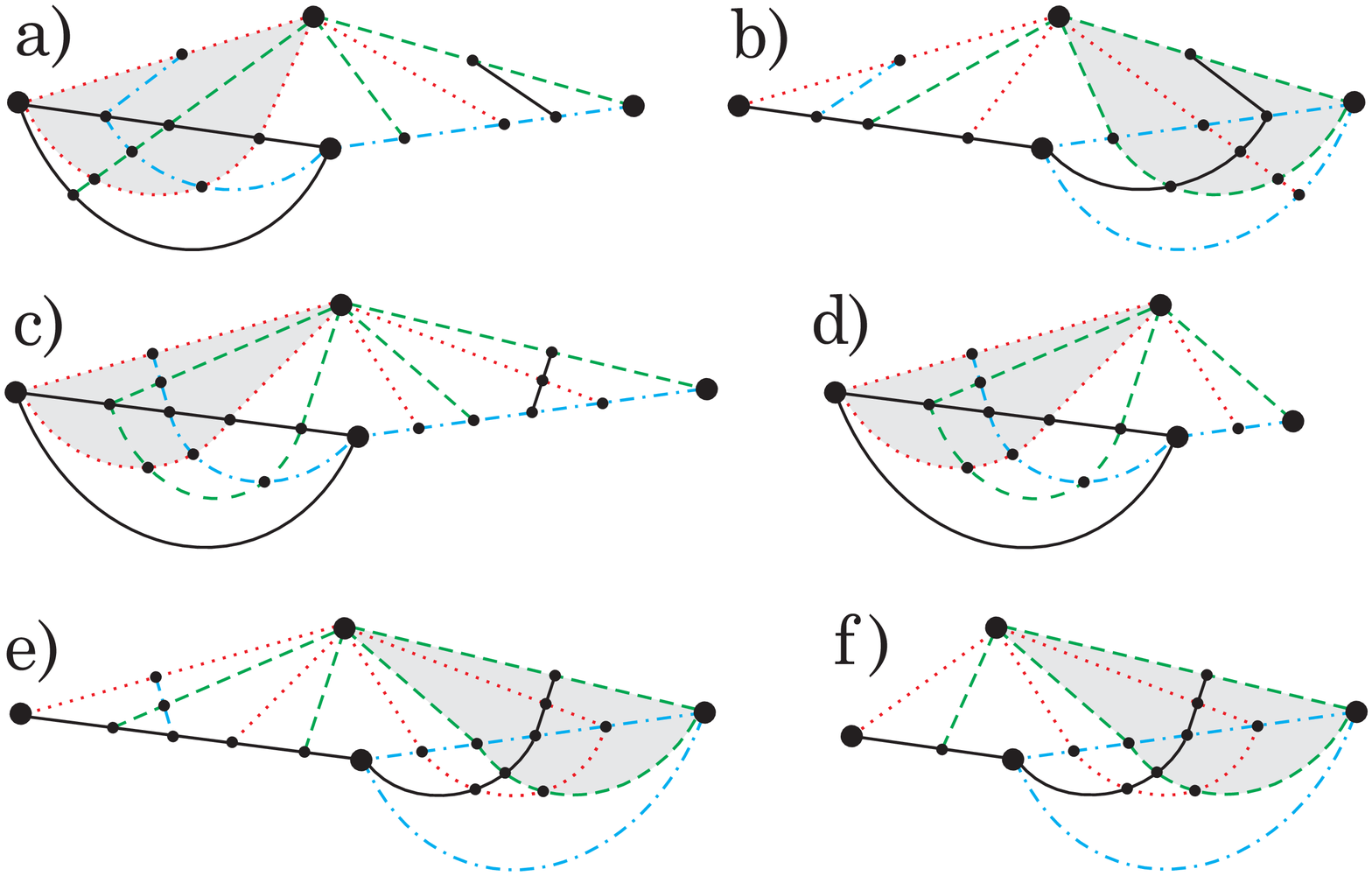}
\caption{Non-uniqueness of an irreducible quadrilateral, cases (c) and (d).}\label{xu}
\end{figure}

\begin{figure}
\centering
\includegraphics[width=4.8in]{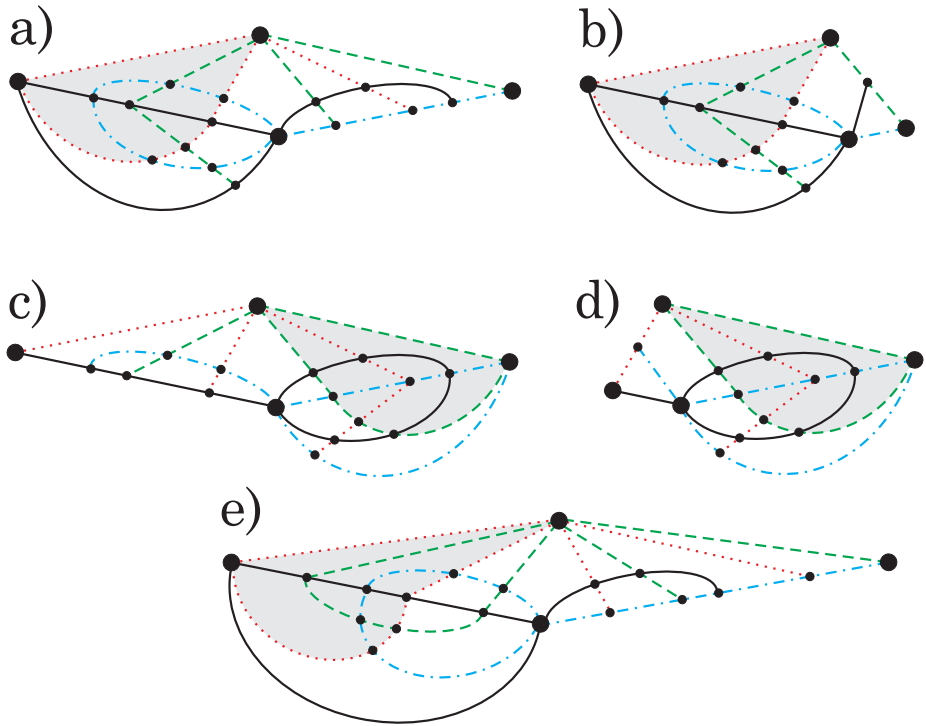}
\caption{Non-uniqueness of an irreducible quadrilateral, cases (e) - (g).}\label{zv}
\end{figure}

\begin{figure}
\centering
\includegraphics[width=3.in]{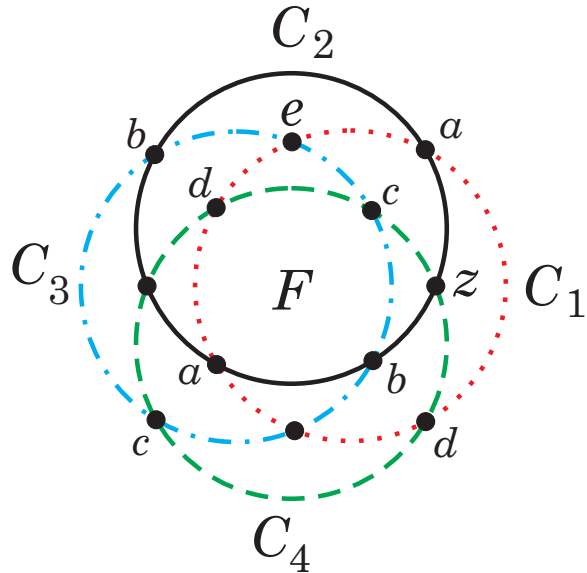}
\caption{The face $F$ of the four-circle configuration $\P$ with fixed angles $(a,b,c,d)$.}\label{fig:abcd}
\end{figure}

\begin{figure}
\centering
\includegraphics[width=4.8in]{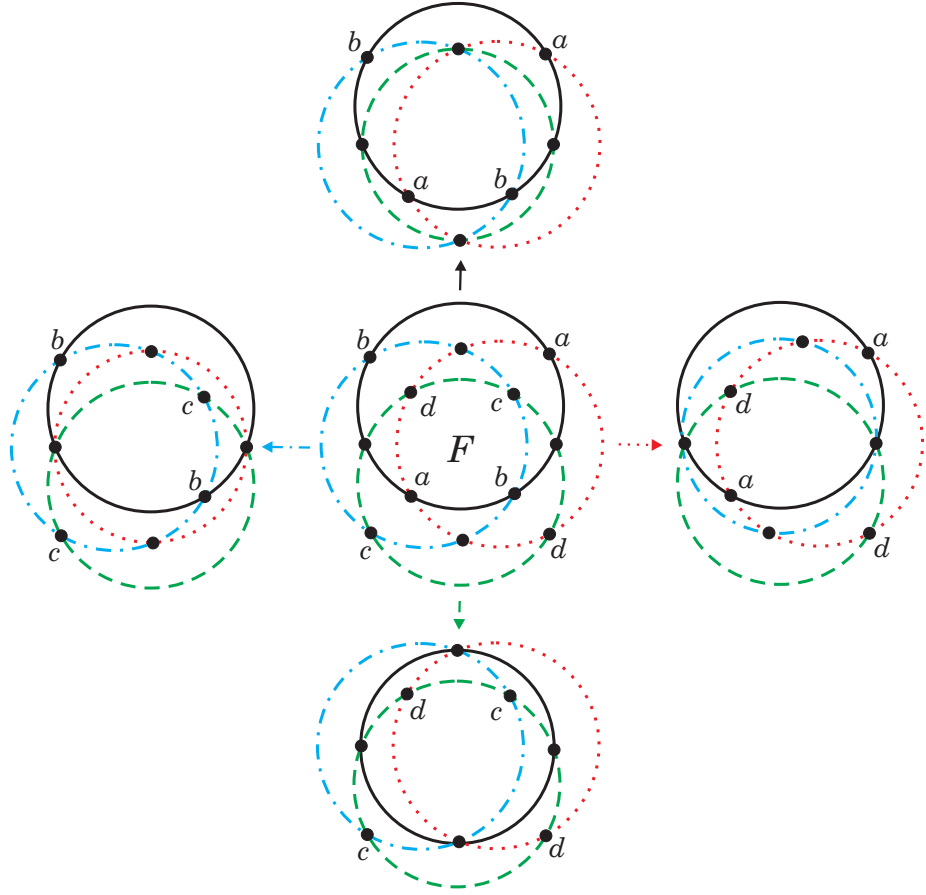}
\caption{Degenerations of a family of four-circle configurations to triple intersections.}\label{5t}
\end{figure}

\begin{figure}
\centering
\includegraphics[width=4.8in]{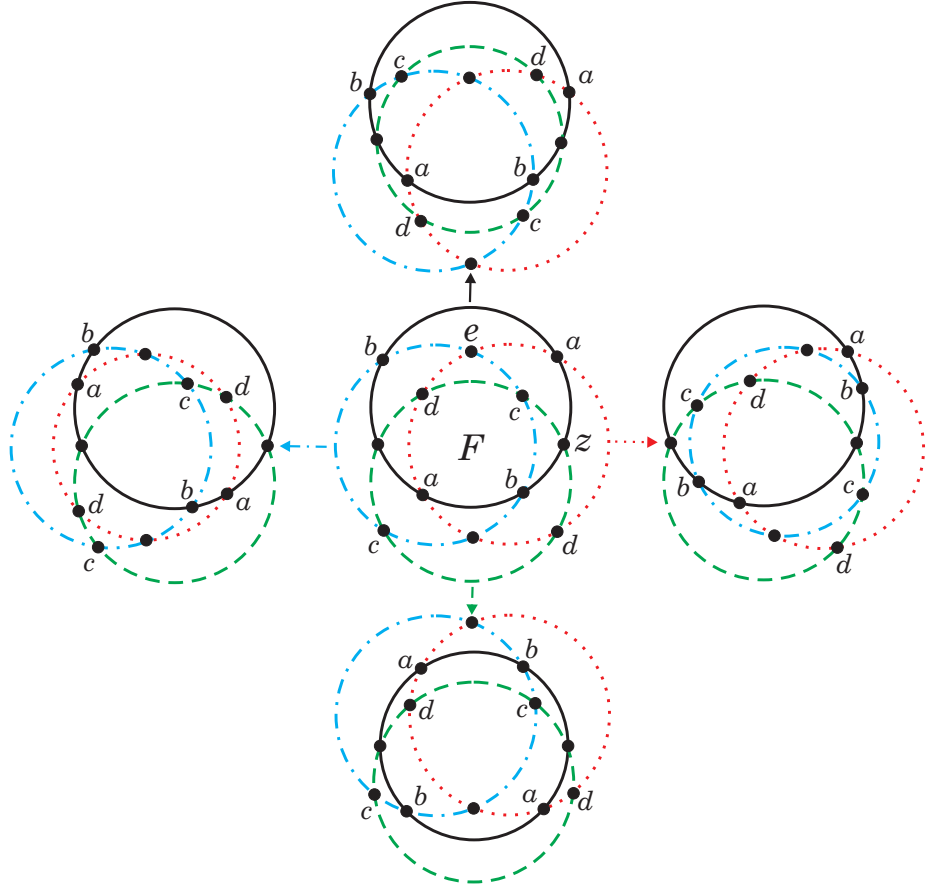}
\caption{Transformations of a family of four-circle configurations beyond triple intersections.}\label{5a}
\end{figure}

\begin{figure}
\centering
\includegraphics[width=4.8in]{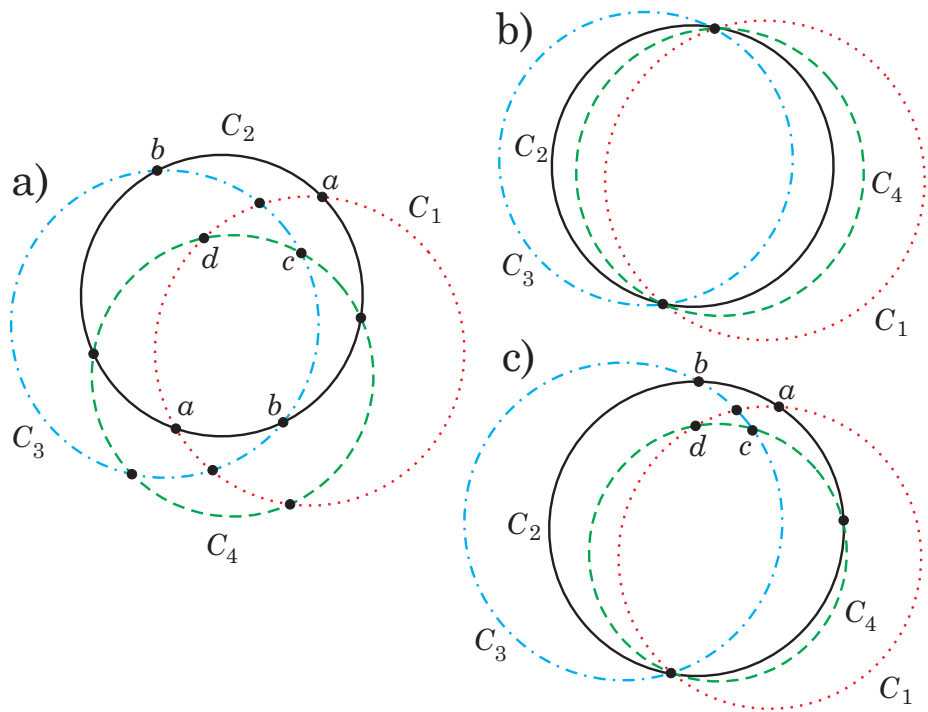}
\caption{Degeneration of a family of four-circle configurations (a) to a configuration with quadruple intersections (b)
and to a non-spherical four-circle configuration (c).}\label{quadruple}
\end{figure}

\begin{figure}
\centering
\includegraphics[width=4in]{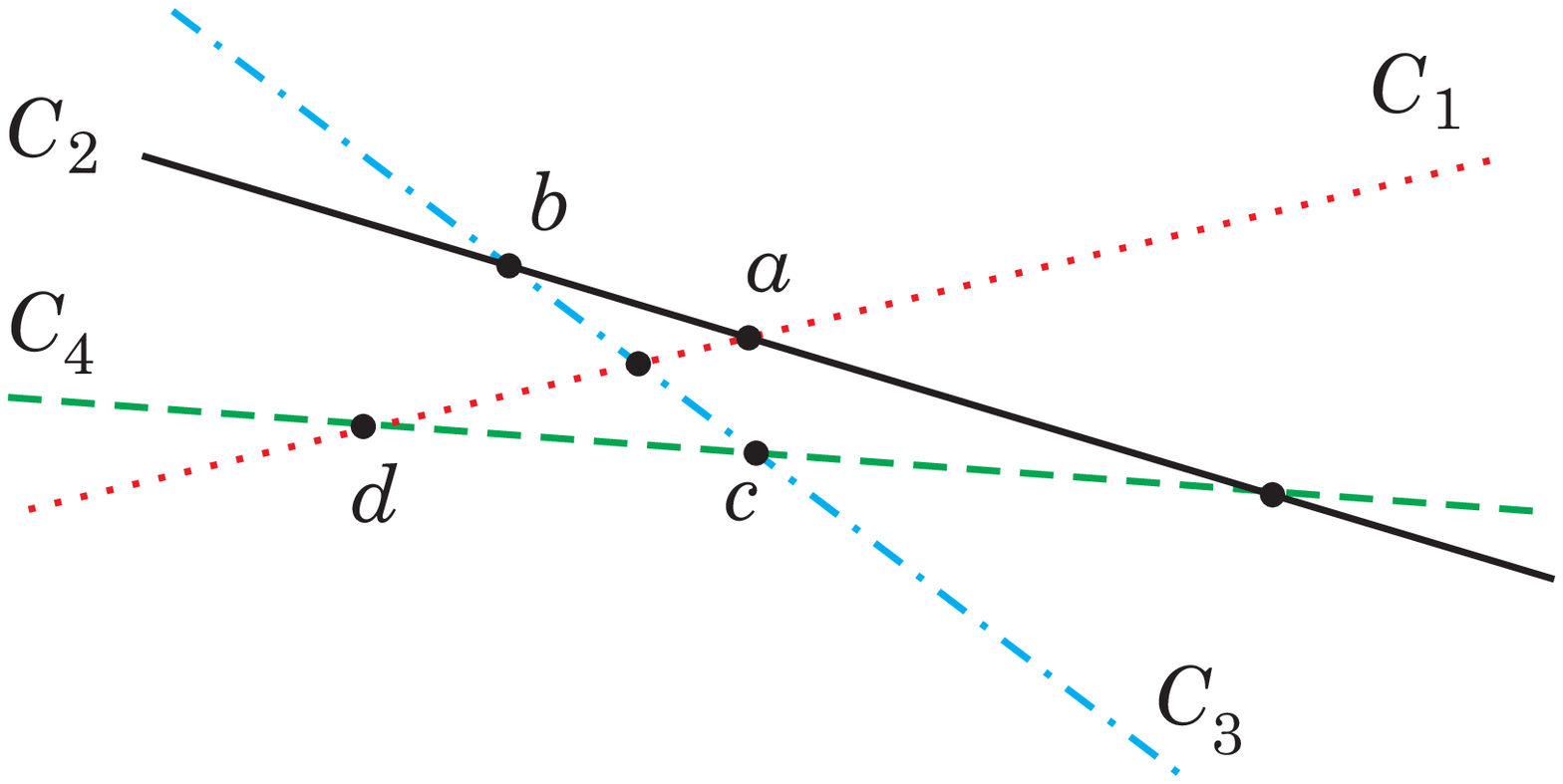}
\caption{A four-line configuration conformally equivalent to the configuration in Fig.~\ref{quadruple}c.}\label{euclid}
\end{figure}

\section{Chains of generic spherical quadrilaterals}\label{section:chains}
In this section we show that generic quadrilaterals with a given net $\Gamma$ and fixed four angles form an open segment $I_\Gamma$ in the set of all generic quadrilaterals, parameterized by the angle between any two circles of the four-circle configurations corresponding to opposite sides of the quadrilaterals.
In the limits at the ends of the interval $I_\Gamma$, a quadrilateral either conformally degenerates or converges to a non-generic spherical quadrilateral $Q'$ (with the sides mapped to a non-generic four-circle configuration), or converges to a non-spherical quadrilateral
after appropriate conformal transformations.
When a non-generic spherical quadrilateral $Q'$ is the limit of two families of generic quadrilaterals with the same angles and distinct nets $\Gamma_-$ and $\Gamma_+$, we say that the two families of generic quadrilaterals belong to a chain of quadrilaterals, and their nets $\Gamma_-$ and $\Gamma_+$ belong to a chain of nets (see Definition \ref{chain} below).

\begin{rmk}\label{chains:nets}\normalfont
The chains of spherical quadrilaterals with at least one integer angle considered in \cite{EGT1} - \cite{EGT3}
were completely determined by the integer parts of their angles.
For generic quadrilaterals, the chains depend also on the fractional parts of their angles.
\end{rmk}

Consider a generic quadrilateral $Q_0$ with the net $\Gamma_0$ and the sides mapped to four circles of a partition $\P_0$ of the sphere.
We claim that the set of all generic quadrilaterals $Q$ with the same net $\Gamma_0$ and the same angles as $Q_0$,
obtained from $Q_0$ by continuous deformation $\P$ of the partition $\P_0$,
constitute an open segment, with the limit at each end
corresponding either to a non-generic four-circle partition $\P'$ of the sphere with a triple intersection of circles, or to a three-circle partition.
At this limit, the quadrilateral $Q$ may conformally degenerate, its conformal modulus converging to either $0$ or $\infty$
(see \cite[Sections 14-15]{EGT2} and \cite[Section 7]{EGT3}).
If it does not conformally degenerate, it usually converges to a spherical quadrilateral $Q'$ over a non-generic partition $\P'$ (see Remark \ref{rmk:tangent} below for an exception).
In this case, the deformation of $Q$ can be extended through $Q'$ to another segment of generic quadrilaterals, with a net $\Gamma_1$ different from $\Gamma_0$.
A chain of quadrilaterals is a sequence of such extensions, starting and ending with degenerate quadrilaterals. The corresponding sequence of nets is called a chain of nets (see Definition \ref{chain} below).

Let $C_1,\dots,C_4$ be the circles of the partition $P$ to which the sides of $Q$ are mapped,
indexed according to the order of the sides of $Q$.
Since the angles of $Q$ are fixed, the angles $a,\,b,\,c,\,d$ between
the circles of $\P$ are also fixed. Here $a$ is the angle between $C_1$ and $C_2$,
$b$ is the angle between $C_2$ and $C_3$, $c$ is the angle between
$C_3$ and $C_4$, and $d$ is the angle between $C_4$ and $C_1$ (see Fig.~\ref{fig:abcd}).
The angle between two circles is defined up to its complement, but we can choose
the values $a,\,b,\,c,\,d$ uniquely by requiring that there is a quadrilateral face $F$ of the partition $\P$ having exactly these angles.
In fact, there are exactly two such faces of $\P$, having the same angles $a,\,b,\,c,\,d$ (with opposite cyclic orders).
The cyclic order of the circles at the sides of these faces is either the same or opposite to the cyclic order of the circles $C_1,\dots,C_4$ to which the sides of $Q$ are mapped.

\begin{prop}\label{abcd}
Let $\P$ be a partition of the sphere defined by a generic configuration of four great circles, and let $F$
be a quadrilateral face of $\P$ with the angles $a,\,b,\,c,\,d$. Then the following inequalities are satisfied:
\begin{equation}\label{sum}
0<a,b,c,d<1,\quad 0<a+b+c+d-2<2\min(a,b,c,d).
\end{equation}
The subset of the unit cube in $\R^4$ defined by these inequalities is an open convex pyramid $\Pi$ with
the vertex $(1,1,1,1)$ and the base an octahedron $P$ in the plane $a+b+c+d=2$ having vertices $(0,0,1,1)$, $(0,1,0,1)$, $(0,1,1,0)$, $(1,0,0,1)$, $(1,0,1,0)$, $(1,1,0,0)$.
\end{prop}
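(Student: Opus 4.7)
The plan is to derive each inequality in (\ref{sum}) as a positivity-of-area condition via Gauss--Bonnet, applied to $F$ and to the triangular and quadrilateral faces of $\P$ adjacent to or vertically opposite $F$, and then to identify the region in the unit cube cut out by these inequalities with the claimed open pyramid by an elementary polytope analysis. The bounds $0<a,b,c,d<1$ are immediate from the definition of the angle between distinct great circles, and $a+b+c+d>2$ is Gauss--Bonnet applied to $F$, whose area equals $(a+b+c+d-2)\pi$.

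For the four cyclic inequalities, I would label the sides of $F$ as $s_1,\ldots,s_4$ on circles $C_1,\ldots,C_4$, with corners $v_{ij}=C_i\cap C_j$ carrying the angles $a,b,c,d$, and introduce the two auxiliary ``fifth angles'' $e,f\in(0,1)$: $e$ the angle between $C_2$ and $C_4$ at $v_{24}$, and $f$ the angle between $C_1$ and $C_3$ at $v_{13}$. Using that around every vertex of $\P$ the four sectors alternate triangular and quadrilateral with vertically opposite sectors carrying equal angles (the cuboctahedron combinatorial structure of Figs.~\ref{partition} and \ref{cube}), the four triangles $T_1,\ldots,T_4$ of $\P$ adjacent to $F$ across $s_1,\ldots,s_4$ have angle-multisets $\{1-d,1-a,e\},\{1-a,1-b,f\},\{1-b,1-c,e\},\{1-c,1-d,f\}$, while the two antipodal pairs of quadrilaterals vertically opposite $F$ at its corners have angle-multisets $\{a,c,1-e,1-f\}$ (at the pair sharing $v_{12},v_{34}$ with $F$) and $\{b,d,1-e,1-f\}$ (at the pair sharing $v_{23},v_{41}$). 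Gauss--Bonnet applied to these six faces then yields
\begin{equation*}
e>a+d-1,\ \ e>b+c-1,\ \ f>a+b-1,\ \ f>c+d-1,\ \ a+c>e+f,\ \ b+d>e+f.
\end{equation*}
Eliminating $e$ and $f$ by pairing a lower bound on $e$ with the upper bound on $e$ obtained from $a+c>e+f$ (or $b+d>e+f$) minus a lower bound on $f$ produces each cyclic inequality; for instance $b+c-1<e<a+c-f<a+c-(c+d-1)=a-d+1$ gives $b+c+d-a<2$, and the other three follow by symmetric pairings of bounds.

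For the pyramid description, note first that $a,b,c,d>0$ is implied by the other inequalities, since $a=0$ would force simultaneously $a+b+c+d>2$ and $-a+b+c+d=b+c+d<2$. The remaining nine facet-defining hyperplanes are the base $\{a+b+c+d=2\}$, the four ``short'' lateral facets $\{a=1\},\{b=1\},\{c=1\},\{d=1\}$, and the four ``long'' lateral facets $\{-a+b+c+d=2\}$ and its three cyclic counterparts. The apex $(1,1,1,1)$ is the unique common point of the eight lateral facets, and the base facet intersected with $[0,1]^4$ is precisely the octahedron $P$ whose six vertices are the two-ones-two-zeros vectors listed in the statement. The equivalence of the two descriptions of $\Pi$ is then verified by writing each point of $\Pi$ uniquely as $(1-t)(1,1,1,1)+ty$ with $t=(4-a-b-c-d)/2\in(0,1)$ and checking that $y$ lies in the interior of $P$. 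The main obstacle is the combinatorial bookkeeping in the middle step---the identification of the vertices and angles of the ``opposite'' quadrilaterals, where one must argue that the vertex of each opposite quadrilateral diagonally across from its shared corner with $F$ is forced to be the antipode of the diagonally opposite corner of $F$---which ultimately rests on the fact that every generic four-circle partition of $\S$ has the cuboctahedron combinatorial type of $\P$.
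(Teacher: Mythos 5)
Your proposal is correct, but the heart of it takes a genuinely different and much heavier route than the paper for the key inequality $a+b+c+d-2<2\min(a,b,c,d)$. The bounds $0<a,b,c,d<1$ and $a+b+c+d>2$ are obtained exactly as in the paper (definition of the angle between distinct great circles, and positivity of the area of $F$). For the remaining four inequalities the paper uses a one-line containment argument: $F$ is the intersection of the four digons (lunes) with angles $a,b,c,d$ and areas $2a,2b,2c,2d$, so its area $A=a+b+c+d-2$ is less than each of $2a,2b,2c,2d$. You instead introduce the auxiliary angles $e,f$, compute the angles of the four triangles adjacent to $F$ and of the quadrilaterals vertically opposite $F$, apply Gauss--Bonnet to six more faces, and eliminate $e,f$. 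Your combinatorial identifications are in fact correct --- the triangle areas you write down agree with those the paper states in the paragraph following the proposition, the opposite-quadrilateral angles $\{a,c,1-e,1-f\}$ and $\{b,d,1-e,1-f\}$ follow from vertical angles plus the antipodal symmetry of the great-circle arrangement, and everything passes the total-area check $\sum_{\text{faces}}\mathrm{area}=4$ --- and the elimination of $e,f$ does yield all four cyclic inequalities. But the ``main obstacle'' you flag (pinning down which of $c$ or $1-c$ appears at the far corner of an opposite quadrilateral) is precisely the work the lune argument makes unnecessary; you should at least record the antipodal-symmetry argument explicitly rather than gesturing at it. One mitigating point: the inequalities $e>a+d-1$, $f>a+b-1$, $a+c>e+f$, $b+d>e+f$, etc.\ that you derive as scaffolding are essentially the bookkeeping the paper needs immediately afterwards for Proposition \ref{degen}, so your detour is not wasted in the larger scheme. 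Your polytope analysis is equivalent to the paper's: where the paper matches each facet of the cone over the octahedron with one of the defining hyperplanes, you parametrize $\Pi$ as $(1-t)(1,1,1,1)+ty$ with $t=(4-a-b-c-d)/2$ and check $y\in\mathrm{int}(P)$; both are routine and correct, and your observation that $a,b,c,d>0$ is implied by $0<a+b+c+d-2<2\min(a,b,c,d)$ is a nice small simplification.
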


\begin{proof}
By definition, $(a,b,c,d)$ is a point of the unit cube in $\R^4$. Since the area $A=a+b+c+d-2$ of $F$ is positive, we have $a+b+c+d>2$. Since $F$ is an intersection of the four digons with the angles $a$, $b$, $c$, $d$ and the areas
$2a$, $2b$, $2c$, $2d$, respectively, we have $A<2\min(a,b,c,d)$. Note that $\Pi$ is defined by linear inequalities, thus it is a convex polytope with each facet on a plane defined by one of these inequalities.
The octahedron $P$ belongs to the plane $a+b+c+d=2$, it is the convex hull of the six vertices listed
in Proposition \ref{abcd}, and each of its eight triangular facets belongs to a side of the unit cube.
Each of the remaining facets of $\Pi$ is a 3-simplex, the convex hull $\Delta$ of the union of $(1,1,1,1)$ and one of the facets $\delta$ of $P$. If one of the variables, say $a$, equals $1$ on $\delta$ then $\Delta$ belongs to the plane $a=1$. If $a=0$ on $\delta$ then $a=\min(a,b,c,d)$ on $\Delta$, and $\Delta$ belongs to the plane
$a+b+c+d-2=2a$. This proves that on each facet of the pyramid $\Pi$ one of the inequalities in (\ref{sum})
becomes an equation. Since $a+b+c+d>2$ in $\Pi$, and all other inequalities in (\ref{sum}) are satisfied at the center $(1/2,1/2,1/2,1/2)$ of $P$, we conclude that the pyramid $\Pi$ is indeed the set in $\R^4$ defined by the inequalities (\ref{sum}). This completes the proof.
\end{proof}

There are four triangular faces of the four-circle configuration $\P$ in Fig.~\ref{fig:abcd} adjacent to the face $F$.
The areas of the bottom and top faces are $1-a-b+e$ and $1-c-d+e$, respectively.
The areas of the left and right faces are $1-a-d+z$ and $1-b-c+z$, respectively.
Here $e$ is the angle between $C_1$ and $C_3$ and $z$ is the angle between $C_2$ and $C_4$.
When the configuration $\P$ is deformed, the areas of the top and bottom are either both decreasing
or both increasing as $e$ decreases or increases, with the same rate as $e$. It follows from the cosine theorem that the top and bottom sides of $F$ are decreasing or increasing as $e$ decreases or increases. Similarly, the areas of the left and right triangular faces adjacent to $F$ are either both decreasing
or both increasing as $z$ decreases or increases, with the same rate as $z$, and the left and right sides of $F$ are decreasing or increasing as $z$ decreases or increases. Since the area of $F$ is constant, $e$
and $z$ cannot be both increasing or both decreasing when $\P$ is deformed. This implies the following statement.

\begin{prop}\label{degen} If $a+b>c+d$ (resp., $a+b<c+d$) then a generic four-circle configuration $\P$ can be deformed until the bottom (resp., top) triangular face adjacent to $F$ is contracted to a point, so that the circles $C_1,\,C_2,\,C_3$ (resp., $C_3,\,C_4,\,C_1$) have a triple intersection, but cannot be deformed so that the top (resp., bottom) face is contracted to a point. Similarly, if $a+d>b+c$ (resp., $a+d<b+c$) then $\P$ can be deformed until the left (resp., right) triangular face adjacent to $F$ is contracted to a point, so that the circles $C_4,\,C_1,\,C_2$ (resp., $C_2,\,C_3,\,C_4$) have a triple intersection, but cannot be deformed so that the right (resp., left) face is contracted to a point.
\end{prop}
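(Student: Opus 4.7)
My plan rests on the three facts already derived just before the proposition: the top and bottom triangular faces have areas $1-c-d+e$ and $1-a-b+e$ respectively, so both are monotone of rate $+1$ in $e$; the left and right triangles have areas $1-a-d+z$ and $1-b-c+z$, both monotone of rate $+1$ in $z$; and since the area of $F$ is the constant $a+b+c+d-2$, the parameters $e$ and $z$ must move in opposite senses along any admissible deformation. This gives a one-parameter family of generic configurations that I parametrize by $e$, with $z$ a strictly decreasing function of $e$.

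Assume $a+b>c+d$; the other three cases will be symmetric. Then bottom area is strictly less than top area identically, so as $e$ is decreased from its initial value $e_0$, the bottom area reaches $0$ first, precisely at $e=a+b-1$, while the top area remains at $a+b-c-d>0$. On the way, $z$ increases, so left and right areas both grow and stay positive. By antipodal symmetry of any great-circle configuration, the four triangles adjacent to the quadrilateral face $F'$ opposite to $F$ are isometric to the four adjacent to $F$, hence also remain non-degenerate on the same interval. Therefore all eight triangular faces of $\P$ are positive throughout $a+b-1<e\le e_0$, the deformation stays within generic configurations, and at $e=a+b-1$ we reach the desired triple intersection of $C_1,C_2,C_3$. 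Conversely, to collapse top we would need $e=c+d-1<a+b-1$, unreachable by decreasing $e$ since bottom degenerates first, and unreachable by increasing $e$ since this only enlarges the top area. The case $a+b<c+d$ is analogous with top and bottom swapped, and the two cases $a+d$ versus $b+c$ repeat the argument with $z$ in place of $e$ and $(\text{left},\text{right})$ in place of $(\text{bottom},\text{top})$.

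The subtle point I expect to be the main obstacle is showing that no other face of $\P$ collapses before bottom during the deformation, since $\P$ has further faces beyond the four triangles adjacent to $F$. Antipodal symmetry reduces the analysis of the four triangles around $F'$ to what has already been checked. For the two remaining antipodal pairs of quadrilateral faces, any collapse requires two of the four vertices (each an intersection of a pair of circles) to merge, which forces a triple intersection of three circles, and hence coincides with the simultaneous collapse of one of the eight triangular faces. Monitoring the eight triangles alone is therefore sufficient, and the monotonicity analysis above concludes the proof.
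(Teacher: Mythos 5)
Your argument is correct and follows essentially the same route as the paper: the paper's proof is precisely the paragraph preceding the proposition, which computes the areas $1-a-b+e$, $1-c-d+e$, $1-a-d+z$, $1-b-c+z$ of the four adjacent triangles and observes that $e$ and $z$ must move in opposite senses because the area of $F$ is fixed, from which the conclusion is drawn by the same monotonicity comparison you use. Your extra verification that no other face of $\P$ (the antipodal triangles and the remaining quadrilateral faces) can degenerate before the bottom one is a point the paper leaves implicit, but it does not constitute a different approach.
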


Four possible degenerations of a family of four-circle configurations to non-generic configurations with triple intersections are shown in Fig.~\ref{5t}. The color (style) of each arrow between configurations in Fig.~\ref{5t} indicates a circle that is not part of a triple intersection in the corresponding non-generic configuration.
Note that, according to Proposition \ref{degen}, at most two of the four possible degenerations (one with a horizontal arrow and another one with a vertical arrow) can be realized for any given angles $(a,b,c,d)$.
If a generic four-circle configuration can be deformed to a non-generic configuration with a triple intersection, its deformation can be extended beyond the triple intersection to a combinatorially different generic four-circle configuration.
Four possibilities for such generic configurations are shown in Fig.~\ref{5a}, on top, bottom, left and right of the original (central) configuration $\P$, depending on the four possible triple intersections to which it may degenerate. The color (style) of each arrow between configurations in Fig.~\ref{5a} indicates a circle
that is not part of a triple intersection in the corresponding non-generic configuration, as in Fig.~\ref{5t}.

Note that condition on the angles of the face $F$ of $\P$ which determines whether it can be deformed through a triple intersection to a configuration $\P'$ (either top or bottom, left or right in Fig.~\ref{5a}) is exactly the inequality for the fixed angles of $\P'$ which guarantees that the quadrilateral face $F'$ of $\P'$ with four fixed angles has positive area.

\begin{rmk}\label{rmk:tangent}\normalfont
What happens if, e.g., $a+b=c+d$? Then configuration $\P$ (see Fig.~\ref{quadruple}a) can be deformed so that in the limit
both top and bottom triangular faces adjacent to $F$ are contracted to a point, and
all four circles intersect at two opposite points in the limit (see Fig.~\ref{quadruple}b). However, combining this family of four-circle configurations with an appropriate family of linear-fractional transformations of the sphere, one can obtain
a configuration with only one four-circle intersection point (see Fig.~\ref{quadruple}c). This limit configuration cannot be realized by great circles, and the family of four-circle configurations cannot be extended beyond the quadruple intersection to a four-circle family equivalent to a family of generic great circles with the net different from the net shown in Fig.~\ref{quadruple}a.
Note that four-circle configuration in Fig.~\ref{quadruple}c is conformally equivalent to a four-line configuration shown in
Fig.~\ref{euclid}. A (circular but not spherical) quadrilateral mapped to such a configuration is called a ``singular flat quadrilateral''
(see \cite{EG1, EG2, EGMP}). Here ``singular'' refers to the possibility of the developing map of such quadrilateral to have simple poles
inside the quadrilateral or on its sides (but not at the corners).

In Fig.~\ref{quadruple} we assume that $a<d$, thus the angle between the circles $C_2$ and $C_4$ in Fig.~\ref{quadruple}b and Fig.~\ref{quadruple}c is $d-a=b-c$.
If, in addition, $a=d$ (thus $b=c$) then the circles $C_2$ and $C_4$ converge in the limit to a single circle passing through the intersection points of $C_1$ and $C_3$. Combining this family of four-circle configurations with an appropriate family of linear-fractional transformations of the sphere,
one can obtain in the limit a configuration
with two tangent circles $C_2$ and $C_4$, their tangency point being at an intersection of the circles $C_1$ and $C_3$ (see \cite{EG1}, Section 4).
\end{rmk}

\subsection{Relations between adjacent four-circle configurations in a chain.}\label{sub:adjacent}
Consider a generic four-circle configuration $\P$ (see Fig.~\ref{fig:abcd}) with the angles $(a,b,c,d)$
of a quadrilateral face $F$.  When $\P$ is deformed beyond a triple intersection to a generic configuration $\P'$  (see Fig.~\ref{5a}) keeping the angles $(a,b,c,d)$ fixed,
the face $F$ of $\P$ is replaced with a quadrilateral face $F'$ of $\P'$ with different fixed angles: the angles $(1-a,1-b,c,d)$ in the top configuration in Fig.~\ref{5a}, the angles $(a,b,1-c,1-d)$ in the bottom configuration, the angles $(a,1-b,1-c,d)$ in the left configuration, and the angles $(1-a,b,c,1-d)$ in the right configuration.
The general rule for the transformation through a triple intersection is that two fixed angles of $F$ which are at the vertices not passing through a triple intersection are replaced with their complementary angles of $F'$.
If we apply the same rule to the four configurations in Fig.~\ref{5a} that can be obtained from $\P$,
we get more generic four-circle configurations.
Eight distinct generic four-circle configurations obtained this way are shown in Figs.~\ref{5b}a - \ref{5b}h.
Four more configurations shown in the bottom row of Fig.~\ref{5b} are equivalent to configurations in its top row:
Fig.~\ref{5b}i is equivalent to Fig.~\ref{5b}c,
Fig.~\ref{5b}j is equivalent to Fig.~\ref{5b}d,
Fig.~\ref{5b}k is equivalent to Fig.~\ref{5b}a,
Fig.~\ref{5b}l is equivalent to Fig.~\ref{5b}b.
The original configuration $\P$ and its four transformations
(see Fig.\ref{5a}) are shown in Figs.~\ref{5b}f, \ref{5b}b, \ref{5b}j, \ref{5b}e, and \ref{5b}g, respectively.

Any two configurations in Fig.~\ref{5b} adjacent either vertically or horizontally can be obtained from each other
by a deformation passing through a triple intersection when certain inequalities on $(a,b,c,d)$ are satisfied.
In fact, Fig.~\ref{5b} should be considered as part of a double periodic square lattice with periods $(4,0)$ and $(2,2)$. For example, configuration in Fig.~\ref{5b}a (and Fig.~\ref{5b}k) has a quadrilateral face with the fixed angles $(a,1-b,c,1-d)$.
It exists when $a+c>b+d$ and $a-b+c-d<2\min(a,1-b,c,1-d)$. It can be obtained, when $a+c>b+d$, either from the configuration in Fig.\ref{5b}b, replacing the angles $1-a$ and $d$ by their complementary angles, or from
the configuration in Fig.\ref{5b}e, replacing the angles $1-c$ and $d$ by their complementary angles.

Note that configurations in Fig.~\ref{5b} twice removed either horizontally or vertically
have all four fixed angles of a quadrilateral face complementary, reversing the second inequality in (\ref{sum}).
Thus at most one of them may exist for any given values of the fixed angles.

\begin{rmk}\label{rmk:even}\normalfont
All configurations in Fig.~\ref{5b} are ``even'': each of them has a quadrilateral face with four fixed angles, an
even number of them complementary to the angles $(a,b,c,d)$. Replacing one of the angles by its complementary angle
we get eight distinct ``odd'' configurations with the given angles $(a,b,c,d)$. Any transformation through a triple intersection
preserving the angles $(a,b,c,d)$ is possible either between two even configurations or between two odd ones.
\end{rmk}

\begin{figure}
\centering
\includegraphics[width=4.8in]{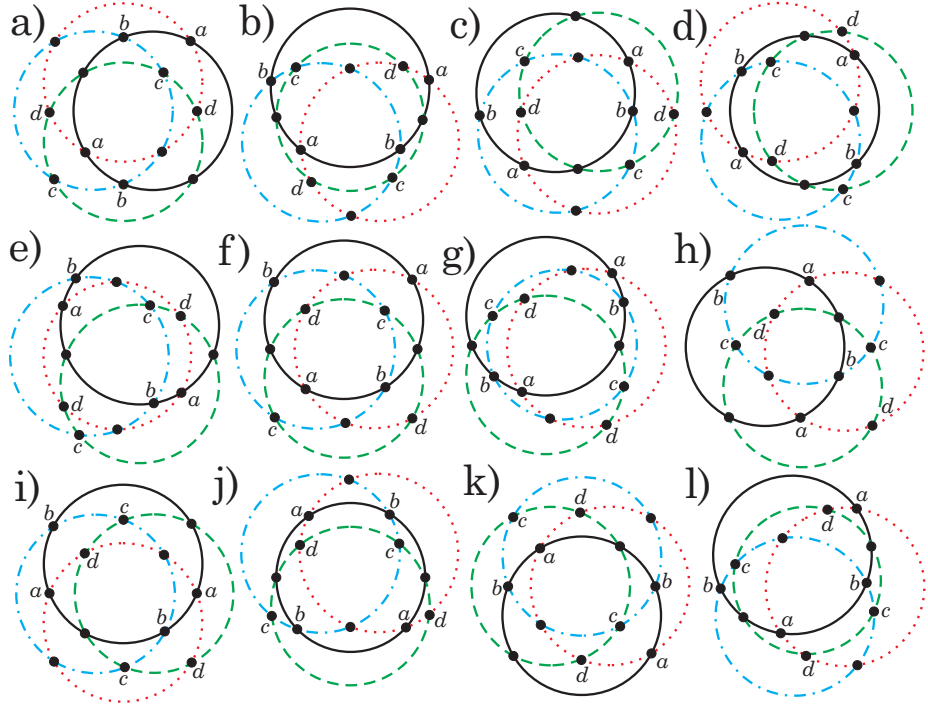}
\caption{Generic ``even'' four-circle configurations related by deformation with fixed angles $(a,b,c,d)$ through triple intersections.}\label{5b}
\end{figure}

Let us return to configuration $\P$ in Fig.~\ref{5b}f and assume that
\begin{equation}\label{ladder}
a+b<c+d,\,\;a+d<b+c,\;a+c<b+d.
\end{equation}
Then configuration $\P$ can be transformed either upward, to configuration in Fig.~\ref{5b}b, or to the right, to configuration in Fig.~\ref{5b}g.
Configuration in Fig.~\ref{5b}b has a quadrilateral face with the angles $(1-a,1-b,c,d)$.
It can be transformed either downward, back to configuration in Fig.~\ref{5b}f,
or to the left, to configuration in Fig.~\ref{5b}a, since $a+c<b+d$ implies $1-a+d>1-b+c$.
Configuration in Fig.~\ref{5b}g has a quadrilateral face with the angles $(1-a,b,c,1-d)$.
It can be transformed either to the left, back to configuration in Fig.~\ref{5b}f,
or downward, to configuration in Fig.~\ref{5b}k, since $a+c<b+d$ implies $1-a+b>c+1-d$.
Note that configuration in Fig.~\ref{5b}k is equivalent to configuration in Fig.~\ref{5b}a.
Thus any sequence of transformations of a configuration satisfying inequalities (\ref{ladder}) is possible within the ``ladder'' pattern in Fig.~\ref{5b}.

Consider now configuration $\P$ in Fig.~\ref{5b}f, with the angles satisfying
\begin{equation}\label{box}
a+b<c+d,\;a+d<b+c,\;a+c>b+d.
\end{equation}
Then configuration $\P$ can be transformed, as before, either upward, to configuration in Fig.~\ref{5b}b, or to the right, to configuration in Fig.~\ref{5b}g, since this depends only on the first two inequalities in (\ref{ladder}) and (\ref{box}).
The configuration in Fig.~\ref{5b}b can be transformed either downward, back to configuration in Fig.~\ref{5b}f,
or to the right, to configuration in Fig.~\ref{5b}c, since $a+c>b+d$ implies $1-a+d<1-b+c$.
Similarly, configuration in Fig.~\ref{5b}g can be transformed either to the left, back to configuration in Fig.~\ref{5b}f,
or upward, to configuration in Fig.~\ref{5b}c, since $a+c>b+d$ implies $1-a+b<c+1-d$.
Thus any sequence of transformations of a configuration
satisfying inequalities (\ref{box}) is possible within the ``box'' pattern in Fig.~\ref{5b}.
This can be summarized as follows.

\begin{prop}\label{ladder-box}
Let $(a,b,c,d)$ be the fixed angles of a quadrilateral face $F$ of a generic four-circle configuration $\P$,
such that the ``opposite'' pairs $(a,c)$ and $(b,d)$ are at the opposite vertices of $F$.
According to Proposition \ref{degen},exactly one of the angles remains unchanged when $\P$ is deformed through both permitted triple intersections.
The ``ladder'' pattern, as in \emph{(\ref{ladder})}, appears when the sum of two angles in the opposite pair containing the ``twice unchanged'' angle is smaller than the sum of two angles in the other opposite pair. Otherwise, the ``box'' pattern, as in \emph{(\ref{box})}, appears.
\end{prop}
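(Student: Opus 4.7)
The plan is to prove both assertions of the proposition by direct case analysis on the signs of the three sum-differences $a+b-(c+d)$, $a+d-(b+c)$, and $a+c-(b+d)$, all nonzero by genericity. First I would use Proposition~\ref{degen} to identify the two permitted first-step deformations from $\P$: the sign of $a+b-(c+d)$ selects ``up'' (the top face contracts, producing new angles $(1-a,1-b,c,d)$ and so complementing the pair $\{a,b\}$) or ``down'' (complementing $\{c,d\}$), and the sign of $a+d-(b+c)$ selects ``left'' (complementing $\{b,c\}$) or ``right'' (complementing $\{a,d\}$). The first assertion of the proposition follows immediately, since the intersection of the two unchanged sets in each of the four possible combinations up+left, up+right, down+left, down+right is the singleton $\{d\}$, $\{c\}$, $\{a\}$, $\{b\}$, respectively. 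Observe that this twice unchanged angle lies in a unique opposite pair: $(a,c)$ when it is $a$ or $c$, and $(b,d)$ when it is $b$ or $d$.

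To prove the second assertion I would analyze one case in full, say up+right (the others being identical after relabeling). The first-step configurations have angles $(1-a,1-b,c,d)$ and $(1-a,b,c,1-d)$. Applying Proposition~\ref{degen} to each, the permitted horizontal step from $(1-a,1-b,c,d)$ is governed by the sign of $(1-a)+d-(1-b)-c=b+d-a-c$, while the permitted vertical step from $(1-a,b,c,1-d)$ is governed by the sign of $(1-a)+b-c-(1-d)=b+d-a-c$. Both reduce to the sign of $a+c-(b+d)$. If $a+c<b+d$, the ``up-then-left'' and ``right-then-down'' paths both exist and a direct computation shows they converge at the common configuration $(1-a,b,1-c,d)$, which differs from $\P$ precisely by complementing the pair $(a,c)$ containing the twice unchanged angle $c$; this is the ladder pattern~(\ref{ladder}). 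If $a+c>b+d$, the alternative ``up-then-right'' and ``right-then-up'' paths both exist and converge at $(a,1-b,c,1-d)$, differing from $\P$ by complementing the other pair $(b,d)$; this is the box pattern~(\ref{box}).

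Thus in this case the ladder arises precisely when $a+c<b+d$, i.e., when the sum of the angles in the pair $(a,c)$ containing the twice unchanged angle is smaller than the sum $b+d$ in the other pair, exactly as claimed. The three remaining combinations (up+left, down+left, down+right) are handled by identical calculations in which the roles of the pairs $(a,c)$ and $(b,d)$ are permuted; the direction of the key inequality flips whenever the twice unchanged angle switches pairs, and this preserves the criterion ``sum of pair containing the twice unchanged angle is smaller than the sum of the other pair.'' The main obstacle is simply the bookkeeping needed to track angle complementations and directions through the four cases, which is routine given the explicit formulas of Proposition~\ref{degen} and the cyclic symmetry of the four-circle configuration.
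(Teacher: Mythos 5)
Your proposal is correct and follows essentially the same route as the paper, which establishes Proposition \ref{ladder-box} through the discussion preceding it: tracking which pairs of fixed angles are complemented under the up/down and left/right deformations of Proposition \ref{degen}, and reducing the second-step conditions in the lattice of configurations of Fig.~\ref{5b} to the sign of $a+c-(b+d)$ under the hypotheses (\ref{ladder}) and (\ref{box}). Your explicit verification that the two second-step paths converge to the same configuration, and your intersection-of-unchanged-sets argument for the first assertion, are just slightly more systematic write-ups of the same computation.
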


\begin{rmk}\label{rmk:pyramid}\normalfont
The conditions on the angles $(a,b,c,d)$ related to the possibility of transforming generic configurations
 through triple intersections can be described as subsets of
the unit cube in $\R^4$.
According to Proposition \ref{abcd}, the point $(a,b,c,d)$ for the partition $\P$
belongs to the open pyramid $\Pi$ with the vertex $(1,1,1,1)$ and the base an octahedron with vertices
$(0,0,1,1)$, $(0,1,0,1)$, $(0,1,1,0)$, $(1,0,0,1)$, $(1,0,1,0)$, $(1,1,0,0)$.
Adding the first inequality in (\ref{ladder}) or (\ref{box}) cuts this pyramid in half: the inequality
$a+b<c+d$ removes the vertex $(1,1,0,0)$ from the octahedron, leaving the convex hull of the remaining vertices of $\Pi$. Note that the inequalities $a+b+c+d-2<2c$ and $a+b+c+d-d<2d$ in (\ref{sum}) are now automatically satisfied, so the last inequality in (\ref{sum}) can be replaced with $a+b+c+d-2<2\min(a,b)$.
Adding the second inequality in (\ref{ladder}) or (\ref{box}) leaves a quarter of $\Pi$, which is a 4-simplex: the inequality $a+d<b+c$ removes the vertex $(1,0,0,1)$ from the octahedron, reducing $\Pi$ to the convex hull of its remaining 5 vertices. The last inequality in (\ref{sum}) can be now replaced with $a+b+c+d-2<2a$, i.e., $b+c+d<2+a$.
Adding the third inequality in (\ref{ladder}) or (\ref{box}) cuts that simplex in half, removing one more vertex of the octahedron (either $(1,0,1,0)$ for the ladder pattern or $(0,1,0,1)$ for the box pattern) and leaving the convex hull of the 4 remaining vertices of $\Pi$ and the center $(1/2,1/2,1/2,1/2)$ of the octahedron.
Note that in the box case the last inequality in (\ref{sum}) is automatically satisfied.
\end{rmk}

The fractional parts $(\alpha,\beta,\gamma,\delta)$ of the angles of a spherical quadrilateral $Q$ are either some of the fixed angles $(a,b,c,d)$ of a quadrilateral face of its four-circle configuration or some of their complementary angles $(1-a,\,1-b,\,1-c,\,1-d)$.
The choice between each angle and its complement is determined by the net of $Q$ (see Propositions \ref{even} and \ref{odd} below).
For the basic primitive quadrilaterals (see Fig.~\ref{basic}) the fractional parts $(\alpha,\beta,\gamma,\delta)$ of the angles are
\begin{multline}\label{basic-angles}
(a,b,c,d)\;{\rm for}\; P_0,\;(1-a,b,1-c,1-d)\;{\rm for}\; X'_{00},\\
(1-a,1-b,1-c,d)\;{\rm for}\; \bar X'_{00},\;(a,1-b,c,1-d)\;{\rm for}\; Z'_{00}.
\end{multline}
Note that the quadrilateral faces with the fixed angles $(a,b,c,d)$ of the four-circle configurations corresponding to the quadrilaterals $X'_{00}$ and $\bar X'_{00}$ are outside their nets.

Attaching a triangle $T_k$ with an integer angle to the side of a quadrilateral does not change the fractional parts of its angles when $k$ is even,
and replaces the fractional part of one of its angles with its complement when $k$ is odd.
According to Theorem \ref{primitive}, every primitive quadrilateral can be obtained by attaching one or two triangles, each with an integer angle,
to the sides of one of the basic quadrilaterals. Corollary \ref{irreducible} states that each irreducible quadrilateral can be obtained from a primitive one by inserting a quadrilateral $P_\mu$ with two of its angles of order $2\mu$ and the other two angles of order 0. Theorem \ref{quad} states that any generic quadrilateral can be obtained from an irreducible one by attaching some digons, with two equal integer angles each, to its sides.
Note that the sum $\Sigma$ of the integer parts of the angles of a quadrilateral
is increased by $k$ when a triangle $T_k$ is attached. Note also that $\Sigma=0$ for $P_0$, $\Sigma=1$ for $X'_{00}$ and $\bar X'_{00}$, $\Sigma=2$ for $Z'_{00}$.
Thus relations (\ref{basic-angles}) imply the following relation between the angles of a generic spherical quadrilateral $Q$ and the fixed angles of a
quadrilateral face of its underlying partition $\P$ of the sphere.

\begin{prop}\label{parity}
Let $Q$ be a generic spherical quadrilateral with the sum $\Sigma$ of the integer parts of its angles, and the corresponding partition $\P$
with a quadrilateral face having angles $(a,b,c,d)$, each of them being either a fractional part of the angle of $Q$ or its complementary angle.
Then the number of the complements among $(a,b,c,d)$ has the same parity as $\Sigma$.
\end{prop}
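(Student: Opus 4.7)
The plan is to prove the congruence by induction on the construction of generic spherical quadrilaterals developed in the preceding two sections: start from the four basic primitive quadrilaterals of Figure \ref{basic}, then apply side extensions (Theorem \ref{primitive}), insertions of $P_\mu$ (Corollary \ref{irreducible}), and digon attachments (Theorem \ref{quad}).

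For the base case, I would verify the claim directly from the relations \eqref{basic-angles}. For $P_0$ the number of complements is $0$ and $\Sigma=0$; for $X'_{00}$ and $\bar X'_{00}$ it is $3$ and $\Sigma=1$; and for $Z'_{00}$ it is $2$ and $\Sigma=2$. In all four cases, both numbers have the same parity.

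For the inductive step I would show that each of the three building operations preserves the underlying four-circle partition $\P$ and the reference face $F$, and changes $\Sigma$ and the number of complements by the same quantity modulo $2$. The key observation for a side extension by $T_n$ is that the two sides of $T_n$ meeting at its integer corner $q$ lie on the same great circle, because an integer angle between two great circles forces them to coincide. Hence the new side $[q,p']$ of $Q'$ lies on the same circle as the old side $M$ of $Q$, and all four circles of $\P$ are unchanged. The operation raises the order of $q$ by $n$ (so $\Sigma$ grows by $n$), leaves the fractional parts of the three unchanged corners alone, and sets the fractional part at the new corner $p'$ to $\alpha$ when $n$ is even and to $1-\alpha$ when $n$ is odd, by the formula of Section \ref{sub:extension}. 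Thus the number of complements also changes by exactly $n\bmod 2$, matching the change in $\Sigma$.

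For the remaining two operations the verification is routine. Inserting $P_\mu$ in a face of the net adds $2\mu$ to the orders of two opposite corners, so $\Sigma$ increases by $4\mu$, while no fractional part or circle of $\P$ is altered. Attaching a digon of type $D_{15}$ or $D_{24}$ of size $k$ along a short side adds $k$ to the orders of both endpoints of that side (by Theorem \ref{digon}), so $\Sigma$ grows by $2k$, with no change to fractional parts, since both sides of such a digon lie on the same circle as the side of $Q_0$ to which the digon is attached. The only delicate point in the whole argument is the circle-tracking in the side-extension step, and it reduces to the elementary observation about integer angles between great circles noted above.
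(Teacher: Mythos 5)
Your proposal is correct and follows essentially the same route as the paper: the paper establishes the proposition by checking the parity for the four basic quadrilaterals via (\ref{basic-angles}) and then observing that attaching $T_k$ adds $k$ to $\Sigma$ while complementing a fractional part exactly when $k$ is odd, and that $P_\mu$ insertions and digon attachments change $\Sigma$ by even amounts without touching the fractional parts. Your extra remark justifying why the four-circle configuration is unchanged under a side extension is a detail the paper leaves implicit, but it does not change the argument.
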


Relations between the fractional parts $(\alpha,\beta,\gamma,\delta)$ of the angles of a primitive quadrilateral $Q$ from the list in Section \ref{section:primitive} and the fixed angles $(a,b,c,d)$ of a quadrilateral face $F$ of its underlying partition $\P$ are presented in the following two Propositions, separate for even and odd values of the sum $\Sigma$ of the integer parts of the angles of $Q$.

\begin{prop}\label{even}
Let $Q$ be a primitive spherical quadrilateral with the even sum of the integer parts of its angles.
Let $\P$ be the corresponding partition of the sphere having a quadrilateral face with the fixed angles $(a,b,c,d)$ which are either fractional parts $(\alpha,\beta,\gamma,\delta)$ of the angles of $Q$ or their complements. The following list describes the angles $(a,b,c,d)$ in terms of the angles $(\alpha,\beta,\gamma,\delta)$, depending on the net of $Q$:

$(\alpha,\beta,\gamma,\delta)$ for $P_0$, $X_{kl}$ with $k$ and $l$ even, $\bar X_{kl}$ with $k$ and $l$ even, $R_{kl}$ with $k$ and $l$ even, $\bar R_{kl}$ with $k$ and $l$ even, $U_{kl}$ with $k$ and $l$ even, $\bar U_{kl}$ with $k$ and $l$ even;

$(1-\alpha,1-\beta,\gamma,\delta)$ for $R_{kl}$ with $k$ and $l$ odd, $\bar X'_{kl}$ with $k$ even and $l$ odd,  $Z_{kl}$ with $k$ even and $l$ odd, $V_{kl}$ with $k$ and $l$ even, $\bar V_{kl}$ with $k$ and $l$ odd;

$(1-\alpha,\beta,1-\gamma,\delta)$ for $X_{kl}$ with $k$ and $l$ odd, $\bar X_{kl}$ with $k$ and $l$ odd, $U_{kl}$ with $k$ and $l$ odd, $\bar U_{kl}$ with $k$ and $l$ odd;

$(1-\alpha,\beta,\gamma,1-\delta)$ for $\bar R_{kl}$ with $k$ and $L$ odd, $X'_{kl}$ with $k$ even and $l$ odd, $\bar Z_{kl}$ with $k$ even and $l$ odd, $V'_{kl}$ for $k$ and $l$ odd, $\bar V'_{kl}$ for $k$ and $l$ even;

$(\alpha,1-\beta,1-\gamma,\delta)$ for $\bar X'_{kl}$ with $k$ odd and $l$ even,  $Z_{kl}$ with $k$ odd and $l$ even, $\bar S_{kl}$ with $k$ and $l$ even,
$V_{kl}$ with $k$ and $l$ odd, $\bar V_{kl}$ with $k$ and $l$ even;

$(\alpha,1-\beta,\gamma,1-\delta)$ for $Z'_{kl}$ with $k$ and $l$ even, $\bar Z'_{kl}$ with $k$ and $l$ even, $W_{kl}$ with $k$ and $l$ odd, $\bar W_{kl}$ with $k$ and $l$ odd;

$(\alpha,\beta,1-\gamma,1-\delta)$ for $S_{kl}$ with $k$ and $l$ even, $X'_{kl}$ with $k$ odd and $l$ even, $\bar Z_{kl}$ with $k$ odd and $l$ even, $V'_{kl}$ for $k$ and $l$ even, $\bar V'_{kl}$ for $k$ and $l$ odd;

$(1-\alpha,1-\beta,1-\gamma,1-\delta)$ for $S_{kl}$ with $k$ and $l$ odd, $\bar S_{kl}$ with $k$ and $l$ odd, $Z'_{kl}$ with $k$ and $l$ odd, $\bar Z'_{kl}$ with $k$ and $l$ odd, $W_{kl}$ with $k$ and $l$ even, $\bar W_{kl}$ with $k$ and $l$ even.
\end{prop}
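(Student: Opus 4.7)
My plan is to prove Proposition~\ref{even} by induction on the number of extension operations used to build the primitive quadrilateral $Q$ from one of the four basic quadrilaterals, using Theorem~\ref{primitive}. The key observation is that an extension has a very clean effect on both the sum $\Sigma$ of integer parts and on the fractional parts of the corner angles, while leaving the ambient four-circle partition $\P$ (and hence the angles $(a,b,c,d)$ of every quadrilateral face of $\P$) unchanged.

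First I would record three invariants of a single extension attaching a triangle $T_n$ to $Q$ at a corner $p$ of order $0$ with fractional angle $\alpha$, producing a new quadrilateral $Q'$ with new corner $p'$:
\emph{(i)} the partition $\P$, and hence the angles $(a,b,c,d)$ of every quadrilateral face of $\P$, are unchanged, because the extended side $L'=L\cup[p,p']$ lies on the same great circle as $L$;
\emph{(ii)} $\Sigma_{Q'}=\Sigma_Q+n$;
\emph{(iii)} the fractional part at $p'$ equals $\alpha$ when $n$ is even and $1-\alpha$ when $n$ is odd, while the fractional parts at the other three corners of $Q'$ coincide with those of the corresponding corners of $Q$.
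Item (iii) is exactly the content of the definition of extension in Section~\ref{sub:extension}, which in turn follows from the parity property of the two non-integer angles of $T_n$ recorded in Section~\ref{sub:triangles}.

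The base cases are then immediate: among the four basic quadrilaterals, only $P_0$ ($\Sigma=0$) and $Z'_{00}$ ($\Sigma=2$) have $\Sigma$ even, and by (\ref{basic-angles}) their fractional parts equal $(a,b,c,d)$ and $(a,1-b,c,1-d)$ respectively; inverting yields the first and sixth rows of the list (with $k=l=0$). For the inductive step, each type in the list is produced from a specific basic quadrilateral $Q_0$ by one or two extensions $T_k$, $T_l$ spelled out in Section~\ref{section:primitive}. A single application of (iii) for each extension complements the fractional part at the corresponding new corner if and only if the order of the attached triangle is odd, leaving the other corners untouched; inverting the resulting formulas reproduces each entry of the list. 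For instance, $X_{kl}$ is built from $P_0$ by two extensions whose new corners occupy two non-adjacent positions in the labeling $(a_0,a_1,a_2,a_3)$ of the resulting quadrilateral, so when both $k$ and $l$ are even no complementations occur and $(a,b,c,d)=(\alpha,\beta,\gamma,\delta)$, while when both are odd two non-adjacent entries are complemented, giving $(a,b,c,d)=(1-\alpha,\beta,1-\gamma,\delta)$. Each of the fifteen remaining type families is verified the same way, and by (ii) the total number of complemented entries always has the same parity as $\Sigma$, in agreement with Proposition~\ref{parity}.

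The main obstacle is the bookkeeping of two superimposed cyclic labelings. The corners of $Q$ are labeled $(a_0,\ldots,a_3)$ by the convention set at the beginning of Section~\ref{section:primitive} (so that $a_0$ has order $0$ and the pair $(a_0,a_2)$ carries the smaller sum of orders), whereas the cyclic labeling $(a,b,c,d)$ of the face angles is inherited from the basic quadrilateral through formula (\ref{basic-angles}). For barred or primed types such as $\bar X_{kl}$, $V'_{kl}$, or $\bar V'_{kl}$, the implicit reflection or rotation must be tracked to keep the two orderings compatible; and for types built from $X'_{00}$ or $\bar X'_{00}$ the quadrilateral face $F$ that carries $(a,b,c,d)$ lies outside $Q$, so after each extension one must identify which of the six quadrilateral faces of $\P$ is the relevant $F$. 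Once these labeling conventions are pinned down for each type, the verification of each of the roughly forty entries in the list reduces to a short mechanical check from invariant (iii).
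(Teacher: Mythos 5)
Your proposal is correct and follows essentially the same route as the paper: the paper derives Proposition~\ref{even} (without a formal proof environment) from exactly the ingredients you list, namely the relations (\ref{basic-angles}) for the basic quadrilaterals, Theorem~\ref{primitive}, and the observation that attaching a triangle $T_k$ leaves the fractional parts unchanged when $k$ is even and complements the fractional part at the new corner when $k$ is odd. Your invariants (i)--(iii) and the induction on the number of side extensions are a slightly more structured write-up of that same bookkeeping argument, with the same residual case-by-case verification of the entries in the list.
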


\begin{prop}\label{odd}
Let $Q$ be a primitive spherical quadrilateral with the odd sum of the integer parts of its angles.
Let $\P$ be the corresponding partition of the sphere having a quadrilateral face with the fixed angles $(a,b,c,d)$ which are either fractional parts $(\alpha,\beta,\gamma,\delta)$ of the angles of $Q$ or their complements. The following list describes the angles $(a,b,c,d)$ in terms of the angles $(\alpha,\beta,\gamma,\delta)$, depending on the net of $Q$.

$(1-\alpha,\beta,\gamma,\delta)$ for $X_{kl}$ with $k$ odd and $l$ even, $\bar X_{kl}$ with $k$ odd and $l$ even, $R_{kl}$ with $k$ odd and $l$ even, $\bar R_{kl}$ with $k$ odd and $l$ even, $U_{kl}$ with $k$ odd and $l$ even, $\bar U_{kl}$ with $k$ even and $l$ odd;

$(\alpha,1-\beta,\gamma,\delta)$ for $R_{kl}$ with $k$ even and $l$ odd, $\bar X'_{kl}$ with $k$ and $l$ odd, $Z_{kl}$ with $k$ and $l$ odd, $V_{kl}$ with $k$ odd and $l$ even, $\bar V_{kl}$ with $k$ odd and $l$ even;

$(\alpha,\beta,1-\gamma,\delta)$ for $X_{kl}$ with $k$ even and $l$ odd, $\bar X_{kl}$ with $k$ even and $l$ odd, $U_{kl}$ with $k$ even and $l$ odd, $\bar U_{kl}$ with $k$ odd and $l$ even;

$(\alpha,\beta,\gamma,1-\delta)$ for $\bar R_{kl}$ with $k$ even and $l$ odd, $X'_{kl}$ with $k$ and $l$ odd, $\bar Z_{kl}$ with $k$ and $l$ odd, $V'_{kl}$ for $k$ even and $l$ odd,  $\bar V'_{kl}$ with $k$ even and $l$ odd;

$(\alpha,1-\beta,1-\gamma,1-\delta)$ for $S_{kl}$ with $k$ even and $l$ odd, $\bar S_{kl}$ with $k$ even and $l$ odd, $Z'_{kl}$ with $k$ even and $l$ odd, $\bar Z'_{kl}$ with $k$ odd and $l$ even, $W_{kl}$ with $k$ odd and $l$ even, $\bar W_{kl}$ with $k$ even and $l$ odd;

$(1-\alpha,\beta,1-\gamma,1-\delta)$ for $X'_{kl}$ with $k$ and $l$ even, $\bar Z_{kl}$ with $k$ and $l$ even, $V_{kl}$ with $k$ and $l$ even, $\bar V_{kl}$ with $k$ and $l$ even, $S_{kl}$ with $k$ odd and $l$ even, $V'_{kl}$ for $k$ odd and $l$ even, $\bar V'_{kl}$ with $k$ odd and $l$ even;

$(1-\alpha,1-\beta,\gamma,1-\delta)$ for $Z'_{kl}$ with $k$ odd and $l$ even, $\bar Z'_{kl}$ with $k$ even and $l$ odd, $W_{kl}$ with $k$ even and $l$ odd, $\bar W_{kl}$ with $k$ odd and $l$ even;

$(1-\alpha,1-\beta,1-\gamma,\delta)$ for $\bar X'_{kl}$ with $k$ and $l$ even, $Z_{kl}$ with $k$ and $l$ even, $\bar S_{kl}$ with $k$ odd and $l$ even,
$V_{kl}$ with $k$ even and $l$ odd, $\bar V_{kl}$ with $k$ even and $l$ odd.
\end{prop}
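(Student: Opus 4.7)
The plan is to verify the classification by following the same inductive construction that Theorem \ref{primitive} provides for primitive quadrilaterals, in exact parallel with Proposition \ref{even}. First I would establish the base relations \eqref{basic-angles} for the four basic primitive quadrilaterals $P_0$, $X'_{00}$, $\bar X'_{00}$, $Z'_{00}$ by direct inspection of the nets in Figure \ref{basic}: each corner lies either on the boundary of the reference quadrilateral face $F$ of $\P$, in which case its fractional angle equals the corresponding one of $(a,b,c,d)$, or in a triangular face adjacent to $F$, in which case it equals the complement. The corresponding sums $\Sigma_0$ of integer parts are $0,1,1,2$, and the number of complements in each base relation agrees with $\Sigma_0 \bmod 2$, consistent with Proposition \ref{parity}.

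Next I would isolate the elementary transformation lemma for side-extensions. When a triangle $T_n$ is attached to $Q$ at a corner $p$ of order $0$ as described in Section \ref{sub:extension}, the old corner $p$ becomes a lateral vertex of $Q'$, the adjacent corner $q$ has its integer part increased by $n$ but its fractional part is unchanged, the new corner $p'$ has fractional angle $\alpha$ if $n$ is even and $1-\alpha$ if $n$ is odd by definition of $T_n$, and the remaining two corners are untouched. Moreover the quadrilateral face $F$ of the underlying four-circle configuration is preserved, because the extension is carried out inside triangular faces of $\P$ adjacent to $F$. Consequently, the transition from the relation for $Q$ to the relation for $Q'$ consists, in the position of $p'$, of the identity if $n$ is even and of complementation $\xi \mapsto 1-\xi$ if $n$ is odd; and $\Sigma$ increases by $n$.

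With this lemma in hand, I would traverse the sixteen named families of Proposition \ref{odd}. Each such family is described in Section \ref{section:primitive} as "a base quadrilateral plus $T_k$ plus (optionally) $T_l$ attached at designated sides", so the relation for that family is obtained by composing at most two toggles (of the type above) with the base relation, producing an expression of the form $(a,b,c,d)=(\epsilon_0 \alpha, \epsilon_1 \beta, \epsilon_2 \gamma, \epsilon_3 \delta)$, where $\epsilon_j$ denotes either identity or complementation. Restricting the parities of $k$ and $l$ so that $\Sigma=\Sigma_0+k+l$ is odd partitions each family into the subcases appearing in Proposition \ref{odd}, and in each subcase the resulting tuple of $\epsilon_j$ matches the tuple asserted in the statement. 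Proposition \ref{parity} furnishes a global sanity check: odd $\Sigma$ forces an odd number of complementations, which is indeed what appears in every line of the proposition.

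The main obstacle is bookkeeping. There are many subcases, and one must track coherently which of the four corners of $Q'$ inherits the role of $p$ and which plays the role of $q$ under the cyclic ordering $(a_0,\dots,a_3)$ fixed at the beginning of Section \ref{section:primitive}. The cleanest execution is to treat each family separately with its fixed diagram (Figures \ref{4circles-x}, \ref{4circles-z}, \ref{4circles-r-s}, \ref{4circles-uvw}) as reference: read off which corners of the base are touched by the $T_k$- and $T_l$-attachments, apply the toggle rule accordingly, and compare with the claimed relation. The symmetries that relate barred and primed variants (reflection and rotation of the quadrilateral) act on $(a,b,c,d)$ by the induced permutation together with the corresponding complementations, and provide an independent check on each pair of mirror cases.
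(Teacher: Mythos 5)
Your proposal is correct and follows essentially the same route as the paper: the paper likewise derives these relations from the base cases (\ref{basic-angles}) for $P_0$, $X'_{00}$, $\bar X'_{00}$, $Z'_{00}$, the observation that attaching $T_k$ complements one fractional angle exactly when $k$ is odd, and the decomposition of every primitive quadrilateral into a basic one plus at most two attached triangles from Theorem \ref{primitive}, with Proposition \ref{parity} as the parity check. The remaining work in both cases is the same case-by-case bookkeeping over the figures, which the paper leaves implicit and you correctly identify as the only real content.
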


\begin{rmk} \normalfont
Note that adding pseudo-diagonals or attaching disks to the sides of a quadrilateral
does not change relations between the angles $(\alpha,\beta,\gamma,\delta)$ and $(a,b,c,d)$ in Propositions \ref{even} and \ref{odd}, so these relations hold for any generic spherical quadrilateral.
\end{rmk}

\begin{example}\label{abcd-x}\normalfont
If the angles of $X_{kl}$ in Fig.~\ref{4circles-x} are $(\alpha,\,\beta,\,\gamma,\,k+l+\delta)$,
 then the angles $(a,b,c,d)$ of the shaded quadrilateral face of its net are
 $(\alpha,\,\beta,\,1-\gamma,\,\delta)$ for $X_{01}$,
$(1-\alpha,\,\beta,\,1-\gamma,\,\delta)$ for $X_{11}$,
$(1-\alpha,\,\beta,\,\gamma,\,\delta)$ for $X_{12}$.
\end{example}

\begin{df}\label{chain}
{\rm A \emph{chain} of quadrilaterals of length $n\ge 0$ is a maximal sequence of segments $I_j$ in the space of generic spherical quadrilaterals with given angles, corresponding to distinct nets $\Gamma_0,\dots,\Gamma_n$, so that the segments $I_j$ and $I_{j+1}$, for $j=0,\dots,n-1$, have a common non-generic non-degenerate quadrilateral $Q_j$ in the limit at their ends, its sides being mapped to a four-circle configuration with a triple intersection.
A chain of length $0$ is a segment $I_0$ of generic spherical quadrilaterals with the net $\Gamma_0$ such that the limits of quadrilaterals at both ends of $I_0$ degenerate.}
\end{df}

\begin{example}\label{chain-x}
{\rm In Fig.~\ref{fig:chainx}a the shaded quadrilateral $Q_0$ with the net $X_{01}$ and angles $(\alpha,\beta,\gamma,1+\delta)$ is shown together with the configuration $\P_0$ to which its sides are mapped.
Note that $\P_0$ has a quadrilateral face with the angles $(a,b,c,d)=(\alpha,\beta,1-\gamma,\delta)$.
According to (\ref{sum}), the angles of $Q_0$ satisfy the inequalities
\begin{equation}\label{eqn:x01}
0<\alpha+\beta-\gamma+\delta-1<2\min(\alpha,\beta,1-\gamma,\delta).
\end{equation}
Proposition \ref{degen} describes the conditions on $(a,b,c,d)$ and $(\alpha,\beta,\gamma,\delta)$
that allow one to transform $\P_0$ to one of the four four-circle configurations with triple intersections.
It is easy to check that only one of these four transformations results in a deformation of $Q_0$
that is non-degenerate in the limit:
if $\alpha+\delta<1-\gamma+\beta$ then $\P_0$ can be deformed to a configuration with a triple intersection shown in Fig.~\ref{fig:chainx}b, and $Q_0$ to the quadrilateral shaded in Fig.~\ref{fig:chainx}b.
Passing through the triple intersection, we get a generic configuration $\P_1$ shown in Fig.~\ref{fig:chainx}c,
and a quadrilateral $Q_1$ (shaded in Fig.~\ref{fig:chainx}c) with the same angles as $Q_0$. The net of $Q_1$ is $X'_{00}$. Configuration $\P_1$ has a quadrilateral face with the angles $(1-\alpha,\beta,1-\gamma,1-\delta)$. If $1+\alpha<\beta+\gamma+\delta$ then $\P_1$ can be transformed to a configuration with a triple intersection shown in Fig.~\ref{fig:chainx}d. The quadrilateral $Q_1$ shaded in Fig.~\ref{fig:chainx}d is non-degenerate.
Passing through the triple intersection we get a generic configuration $\P_2$ shown in Fig.~\ref{fig:chainx}e,
and a quadrilateral $Q_2$ (shaded in Fig.~\ref{fig:chainx}e) with the same angles as $Q_0$. The net of $Q_2$ is $X_{10}$, and any deformation of $\P_2$ to a configuration with a triple intersection, other than that shown in Fig.~\ref{fig:chainx}, results in a degenerate quadrilateral.
Thus the chain of nets for the angles satisfying (\ref{eqn:x01}) and
\begin{equation}\label{eqn:chainx}
\alpha+\gamma+\delta<1+\beta,\quad\beta+\gamma+\delta>1+\alpha
\end{equation}
has length 2.
Remark \ref{rmk:pyramid} implies that the last inequality in (\ref{eqn:x01}) can be replaced by $\beta-\gamma+\delta<1+\alpha$ when the inequalities in (\ref{eqn:chainx}) are satisfied.
If the first inequality in (\ref{eqn:chainx}) is violated then the quadrilateral $Q_1$ with the net $X'_{00}$ does not exist, and the chain breaks down into two chains of length 0 consisting of the quadrilaterals $X_{01}$ and $X_{10}$. If the second
inequality in (\ref{eqn:chainx}) is violated
then our chain of length 2 reduces to a chain $\{X_{01}, X'_{00}\}$of length $1$.
If both the first and the second inequalities are violated, the chain of length 2 reduces to a chain of length 0 consisting of a single net $X_{01}$.}
\end{example}

\begin{example}\label{chain-x-quadruple}
{\rm If $\alpha+\gamma+\delta=1+\beta$ in (\ref{eqn:chainx}) then the configuration $\P_0$ of the quadrilateral $Q_0$ with the net $X_{01}$ in Fig.~\ref{fig:chainx}a (shown also in Fig.~\ref{fig:chainxquadruple}a) can be deformed
(see Remark \ref{rmk:tangent}) so that in the limit the four circles intersect at one point, and the limit quadrilateral (see Fig.~\ref{fig:chainxquadruple}c), with the same angles as $Q_0$, is not degenerate. Note that the limit quadrilateral is not spherical, and the family of quadrilaterals obtained by deformation of $\P_0$ cannot be extended beyond the limit configuration with quadruple intersection as a spherical configuration.}
\end{example}
\begin{figure}
\centering
\includegraphics[width=4.8in]{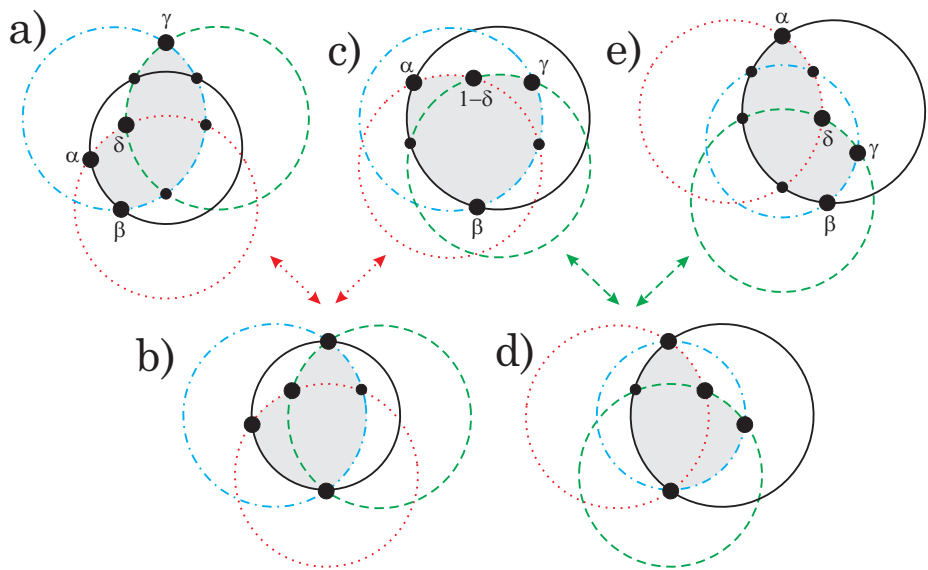}
\caption{The chain of quadrilaterals $X_{01},\,X'_{00},\,X_{10}$.}\label{fig:chainx}
\end{figure}

\begin{figure}
\centering
\includegraphics[width=4.8in]{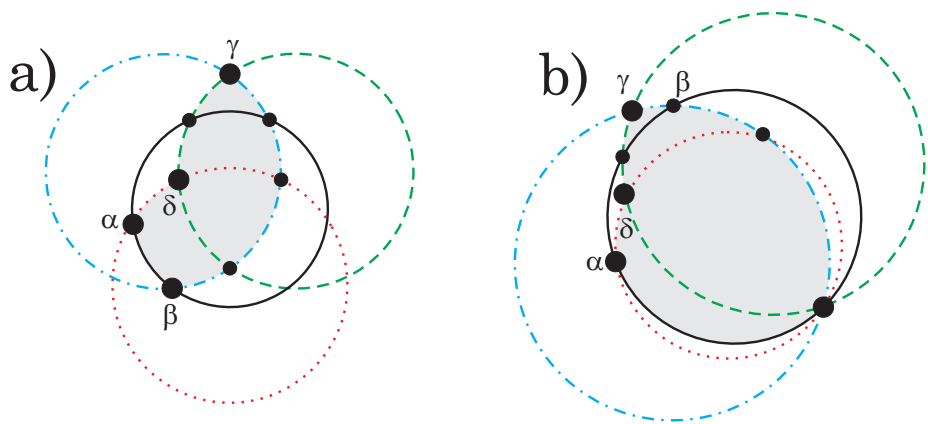}
\caption{Limit of the quadrilateral $X_{01}$ when $\alpha+\delta=\beta+1-\gamma$.}\label{fig:chainxquadruple}
\end{figure}

\begin{figure}
\centering
\includegraphics[width=4.8in]{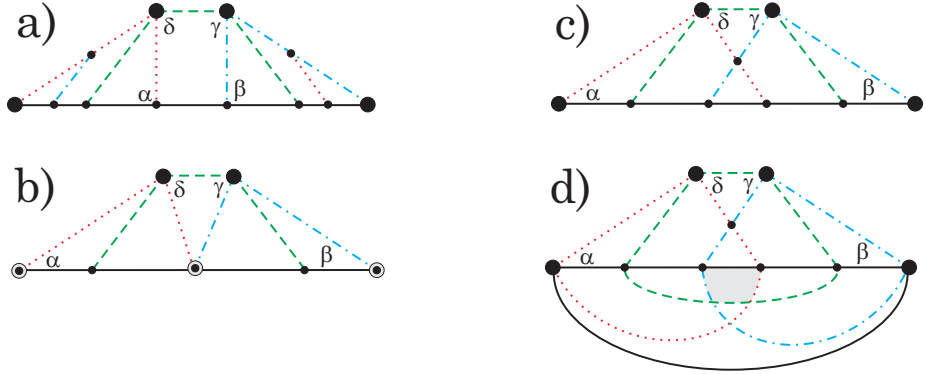}
\caption{The chain of quadrilaterals $R_{11}$ and $S_{11}$.}\label{fig:chainrs}
\end{figure}

\begin{figure}
\centering
\includegraphics[width=4.8in]{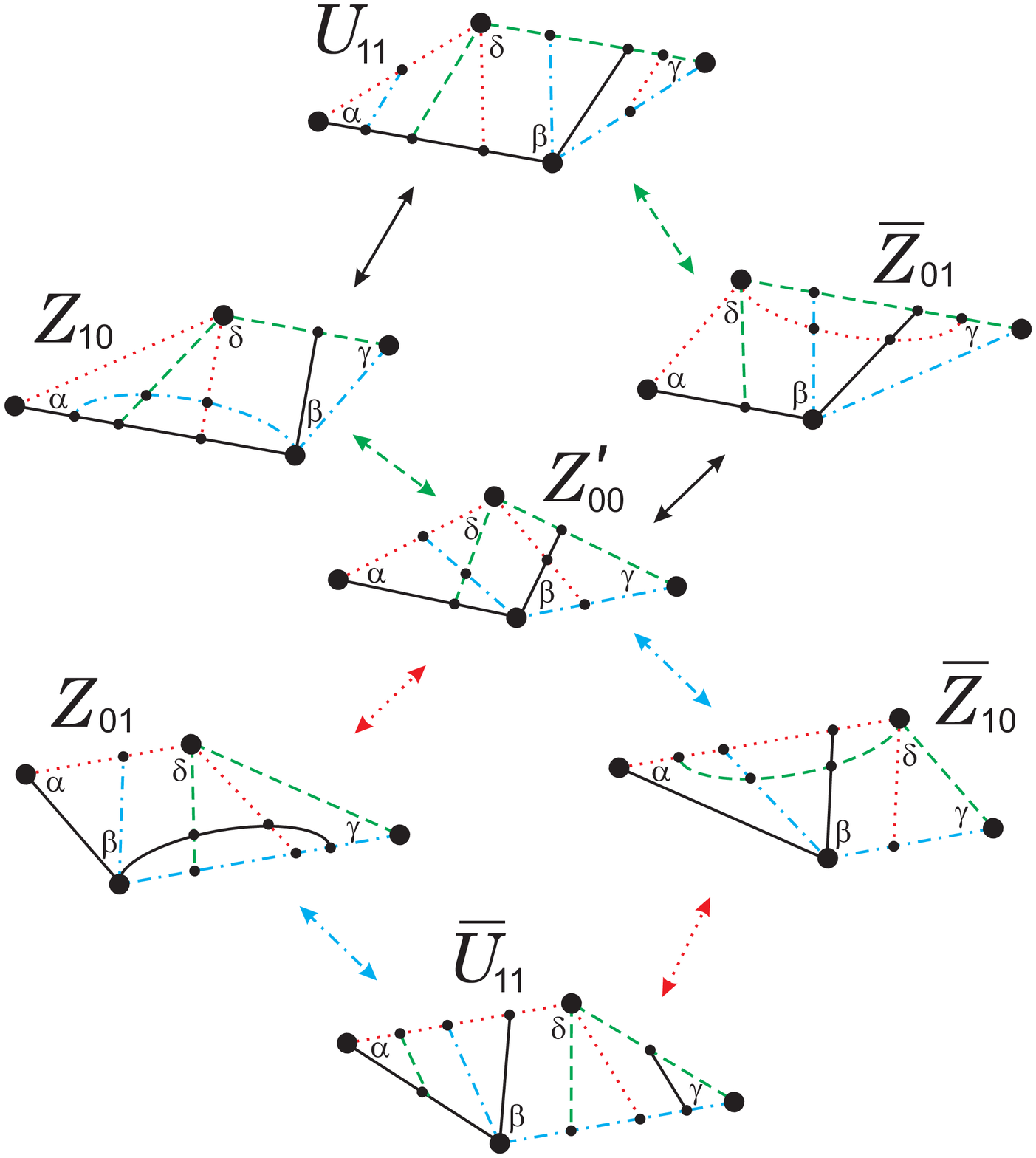}
\caption{The chains of quadrilaterals associated with $Z'_{00}$.}\label{fig:chainuvzprime}
\end{figure}

\begin{figure}
\centering
\includegraphics[width=4.8in]{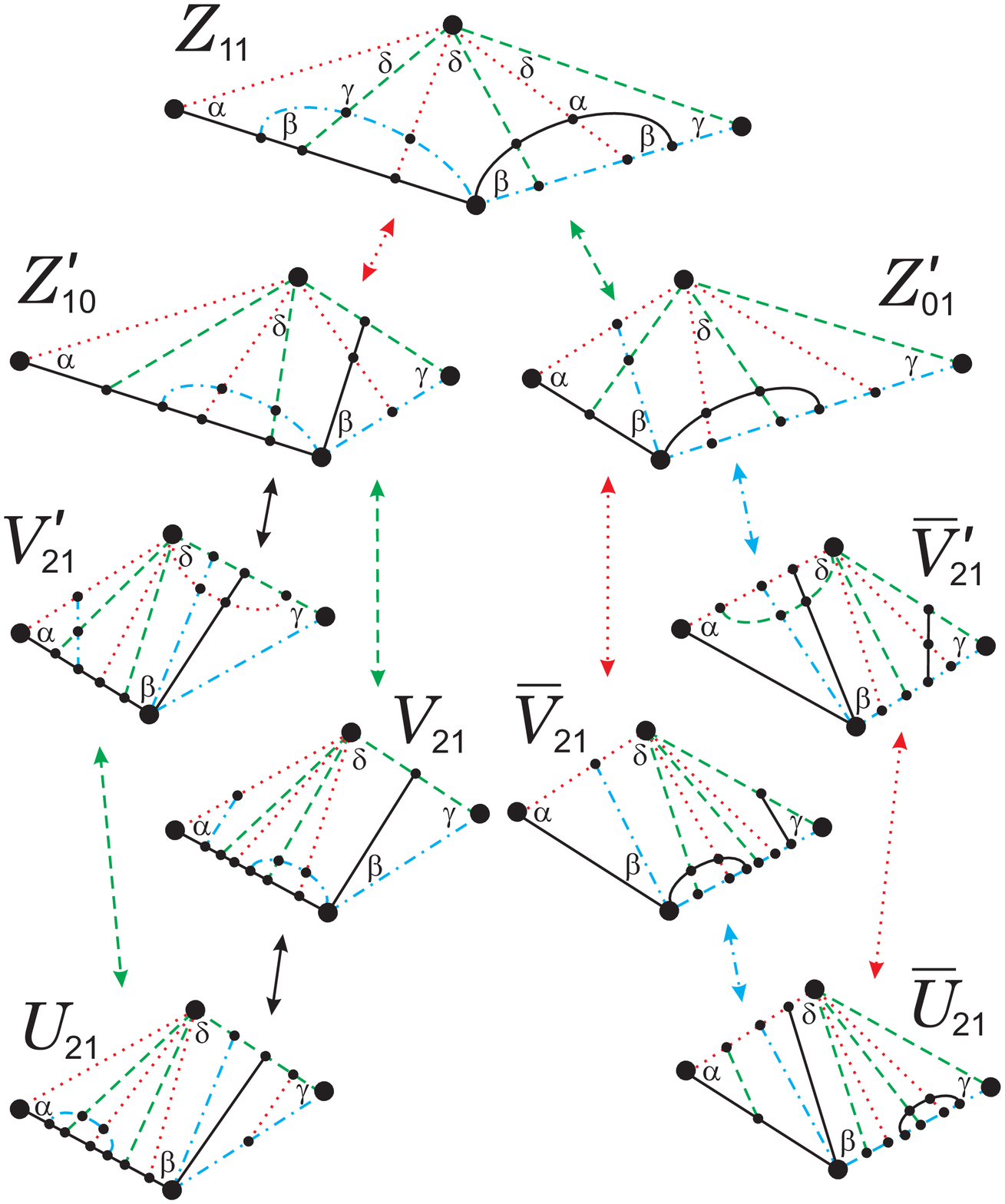}
\caption{The chains of quadrilaterals associated with $Z_{11}$.}\label{fig:chainz}
\end{figure}

\begin{figure}
\centering
\includegraphics[width=4.8in]{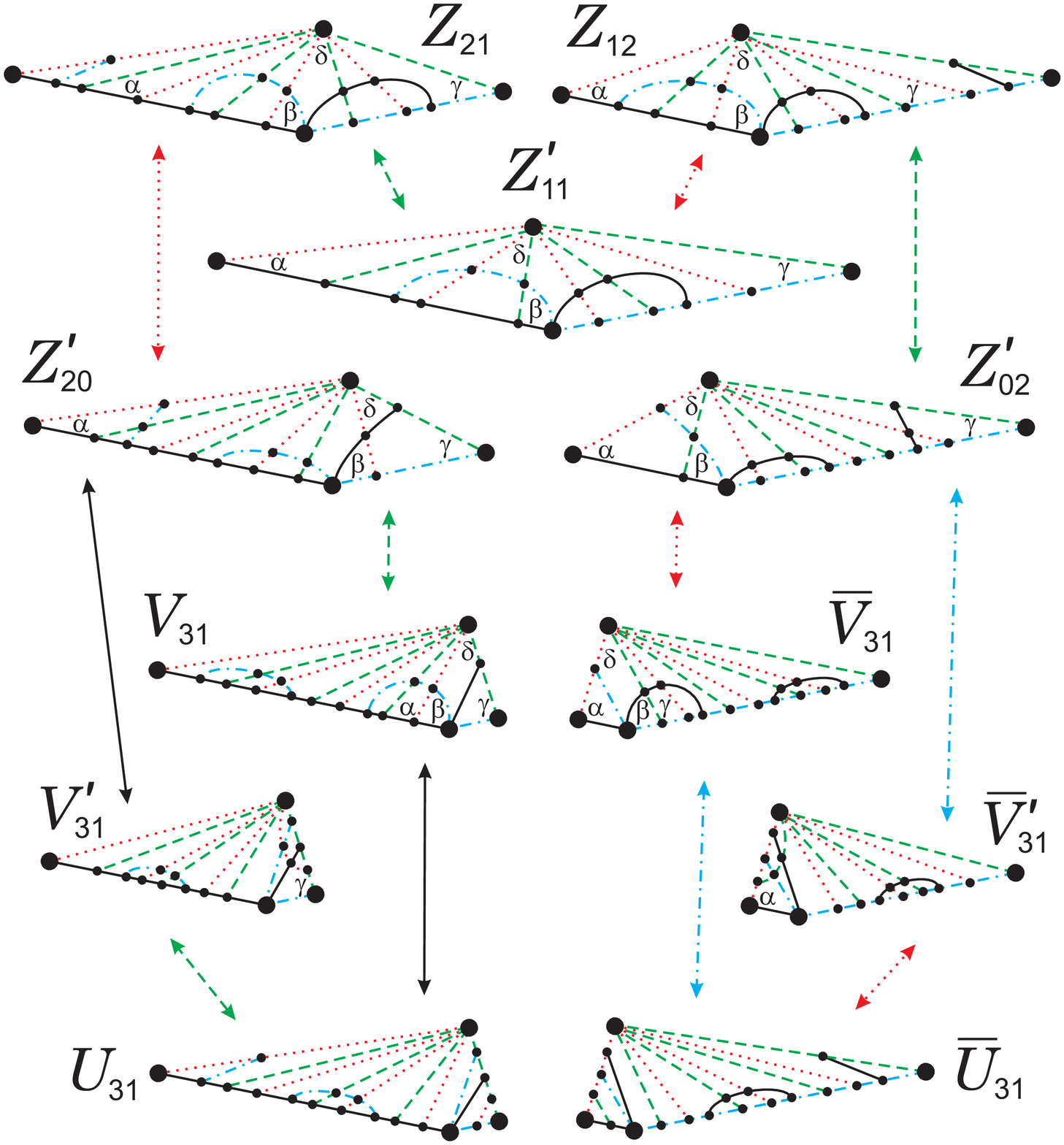}
\caption{The chains of quadrilaterals associated with $Z'_{11}$.}\label{fig:chainzprime}
\end{figure}

\begin{figure}
\centering
\includegraphics[width=4.2in]{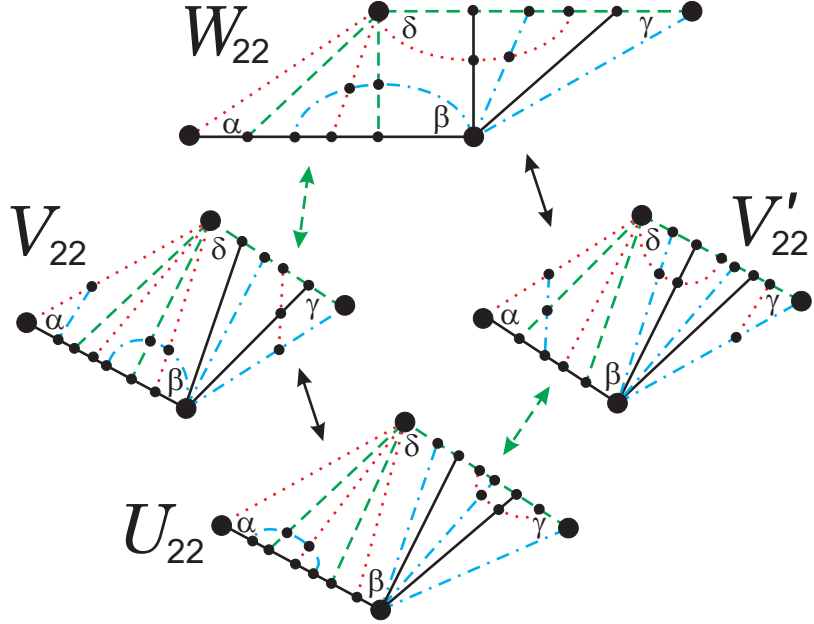}
\caption{The chains of quadrilaterals associated with $W_{22}$.}\label{fig:chainw}
\end{figure}

\begin{example}\label{chain-rs}
{\rm In Fig.~\ref{fig:chainrs}a, a quadrilateral $Q_0$ with the net $R_{11}$ (see Fig.~\ref{4circles-r-s}) with angles $\alpha$, $\beta$, $1+\gamma$, $1+\delta$ is shown. The quadrilateral face with fixed angles of the corresponding configuration $\P_0$ has the angles $(a,b,c,d)=(1-\alpha,1-\beta,\gamma,\delta)$ satisfying the inequalities
\begin{equation}\label{eqn:r}
0<\gamma+\delta-\alpha-\beta<2\min(1-\alpha,1-\beta,\gamma,\delta).
\end{equation}
Deforming $\P_0$ to a configuration with a triple intersection such that one of triangular faces of $Q_0$ adjacent to its top side
is contracted to a point results in conformal degeneration of the quadrilateral with modulus tending to $0$ in the limit.
If $1-\alpha+1-\beta>\gamma+\delta$, i.e.,
\begin{equation}\label{eqn:s}
\alpha+\beta+\gamma+\delta<2
\end{equation}
then $\P_0$ can be deformed to a configuration with a triple intersection such that the corresponding quadrilateral shown in Fig.~\ref{fig:chainrs}b is not degenerate.
Passing through the triple intersection we get a generic configuration $\P_1$ corresponding to the quadrilateral $Q_1$ shown in Fig.~\ref{fig:chainrs}c.
The net of $Q_1$ is $S_{11}$ (see Fig.~\ref{4circles-r-s}). Note that $\P_1$ has a quadrilateral face with the angles $(1-\alpha,1-\beta,1-\gamma,1-\delta)$
which is not part of the net of $Q_1$ but can be seen if a disk $D_{51}$ is attached to the side of order $5$ of $Q_1$ (shaded area in Fig.~\ref{fig:chainrs}d).
Thus $R_{11}$ and $S_{11}$ form a chain of length 1 when the inequalities (\ref{eqn:r}) and (\ref{eqn:s}) are satisfied. Note that, according to Remark \ref{rmk:pyramid}, the last inequality in (\ref{eqn:r}) can be replaced
with $\gamma+\delta-\alpha-\beta<2\min(\gamma,\delta)$.

If $\alpha+\beta+\gamma+\delta>2$ then $\P_0$ could be deformed so that the top side of $Q_0$ is contracted to a point.
This would result in conformal degeneration of $Q_0$ with the modulus tending to $\infty$.
Thus the net $R_{11}$ would constitute a chain of length $0$ in that case.
Note that a quadrilateral with the net $S_{11}$ does not exist in this case.
However, when $\alpha+\beta+\gamma+\delta>2$, in addition to a quadrilateral $Q_0$ with the net $R_{11}$,
a quadrilateral $Q_2$ with the same angles as $Q_0$, such that its net is $P_0\cup D_{15}$, with a disk attached to one side of a quadrilateral $P_0$, may exist.
The chain of $Q_2$ consists of a single net and has length $0$.
Thus there may be either one chain or two chains of length $0$ (either $R_{11}$ or $P_0\cup D_{15}$, or both) in this case.

Attaching disks to the sides of $R_{11}$ and $S_{11}$ does not affect the existence (or non-existence) of the chain of length 1 containing the two nets, except when a disk $D_{51}$ is attached to the side of order 5 of $S_{11}$ (see Fig.~\ref{fig:chainrs}d). In this case, even when both (\ref{eqn:r}) and (\ref{eqn:s}) hold, the quadrilateral $S_{11}+D_{51}$ conformally degenerates (its bottom side is contracted to a point) when configuration $P_1$ is deformed to a configuration with a triple intersection corresponding to the quadrilateral in Fig.~\ref{fig:chainrs}b. Accordingly, no disk can be attached to the side of order 7 of the quadrilateral $R_{11}$.}
\end{example}

\begin{example}\label{chain-uvzprime}
{\rm The chains of quadrilaterals associated with the quadrilateral $Z'_{00}$ are shown in Fig.~\ref{fig:chainuvzprime}.
All quadrilaterals in Fig.~\ref{fig:chainuvzprime} have the angles $\alpha,\,1+\beta,\,\gamma,\,1+\delta$.
The net $Z'_{00}$ of the quadrilateral in the center of Fig.~\ref{fig:chainuvzprime} has a quadrilateral face
with the angles $(\alpha,1-\beta,\gamma,1-\delta)$ satisfying
\begin{equation}\label{eqn:zprime}
0<\alpha-\beta+\gamma-\delta<2\min(\alpha,1-\beta,\gamma,1-\delta)
\end{equation}
If $\alpha+\delta>\beta+\gamma$ then the quadrilateral $Z'_{00}$ can be deformed through a triple intersection to the quadrilateral $Z_{10}$. If $\alpha+\delta<\beta+\gamma$ then the quadrilateral $Z'_{00}$ can be deformed through a triple intersection to the quadrilateral $\bar Z_{01}$. Note that these two deformations are not compatible.

If $\alpha+\beta<\gamma+\delta$ then the quadrilateral $Z'_{00}$ can be deformed through a triple intersection to the quadrilateral $Z_{01}$. If $\alpha+\beta>\gamma+\delta$ then the quadrilateral $Z'_{00}$ can be deformed through a triple intersection to the quadrilateral $\bar Z_{10}$. Note that these two deformations are not compatible.

If instead of (\ref{eqn:zprime}) we have
\begin{equation}\label{eqn:u}
0<\beta-\alpha+\delta-\gamma<2\min(1-\alpha,\beta,1-\gamma,\delta)
\end{equation}
then each of the quadrilaterals $Z_{10}$ and $\bar Z_{01}$ can be deformed through a triple intersection to the quadrilateral $U_{11}$, and each of the quadrilaterals $Z_{01}$ and $\bar Z_{10}$ can be deformed through a triple intersection to the quadrilateral $\bar U_{11}$.
Note that these deformations are not compatible with the deformations from $Z_{10}$, $\bar Z_{01}$, $Z_{01}$ and $\bar Z_{10}$ to $Z'_{00}$.

Thus a chain of length 2 containing the quadrilateral $Z'_{00}$ and either quadrilaterals $Z_{10}$ and $Z_{01}$ or the quadrilaterals $\bar Z_{01}$ and $Z_{10}$ exists when inequalities (\ref{eqn:zprime}) are satisfied and
$\alpha+\delta\ne\beta+\gamma$, $\alpha+\beta\ne\gamma+\delta$.
Two chains of length 1, one of them containing the quadrilateral $U_{11}$ and one of the quadrilaterals $Z_{10}$ and $\bar Z_{01}$,
and another one containing the quadrilateral $\bar U_{11}$ and one of the quadrilaterals $Z_{01}$ and $\bar Z_{10}$, exist when inequalities (\ref{eqn:u}) are satisfied and $\alpha+\delta\ne\beta+\gamma$, $\alpha+\beta\ne\gamma+\delta$.}
\end{example}

\begin{example}\label{chain-z}{\rm
Fig.~\ref{fig:chainz} shows the chains of quadrilaterals associated with the quadrilateral $Z_{11}$.
 All quadrilaterals in Fig.~\ref{fig:chainz} have the angles $\alpha,\,1+\beta,\,\gamma,\,2+\delta$.
The net of $Z_{11}$ has a quadrilateral face with the angles $(\alpha,1-\beta,\gamma,\delta)$ satisfying
\begin{equation}\label{eqn:z}
1<\alpha-\beta+\gamma+\delta>1+2\min(\alpha,1-\beta,\gamma,\delta).
\end{equation}
If $\alpha+\beta+\delta<1+\gamma$, the quadrilateral $Z_{11}$ can be deformed to the quadrilateral $Z'_{10}$.
If $\beta+\gamma+\delta<1+\alpha$, the quadrilateral $Z_{11}$ can be deformed to the quadrilateral $Z'_{01}$.
If $\alpha+\beta+\gamma<1+\delta$, the quadrilateral $Z'_{10}$ can be deformed to the quadrilateral $V_{21}$,
and the quadrilateral $Z'_{01}$ can be deformed to the quadrilateral $\bar V_{21}$.
If $\alpha+\beta+\gamma>1+\delta$, the quadrilateral $Z'_{10}$ can be deformed to the quadrilateral $V'_{21}$,
and the quadrilateral $Z'_{01}$ can be deformed to the quadrilateral $\bar V'_{21}$.
Note that conditions on the angles of $V'_{21}$ and $\bar V'_{21}$ are opposite to those on the angles of $V_{21}$ and $\bar V_{21}$, thus only one of the two deformations is possible for $Z'_{10}$ and $Z'_{01}$.

If $\alpha+\beta+\delta>1+\gamma$, the quadrilaterals $V_{21}$ and $V'_{21}$ can be deformed to the quadrilateral $U_{21}$.
If $\beta+\gamma+\delta>1+\alpha$, the quadrilaterals $\bar V_{21}$ and $\bar V'_{21}$ can be deformed to the quadrilateral $\bar U_{21}$.
Note that the condition on the angles of $U_{21}$ and $\bar U_{21}$ are opposite to those on the angles of $Z'_{10}$ and $Z'_{01}$, respectively.

Summing up, if in addition to (\ref{eqn:z}) the inequalities
\begin{equation}\label{v21}
\alpha+\beta+\delta>1+\gamma,\quad\beta+\gamma+\delta<1+\alpha,\quad\alpha+\beta+\gamma<1+\delta
\end{equation}
are satisfied, then there is a chain $\{V_{21},Z'_{10},Z_{11},Z'_{01},\bar V_{21}\}$ of length 4.
If the last inequality in (\ref{v21}) is replaced by the opposite inequality, $V_{21}$ is replaced by $V'_{21}$ and $\bar V_{21}$ is replaced by $\bar V'_{21}$, still having a chain of length 4.
If the first inequality in (\ref{v21}) is replaced by the opposite inequality, $Z'_{10}$ is replaced by $U_{21}$, and we get a chain
$\{Z_{11},Z'_{01},\bar V_{21}\}$ of length 2 and a chain $\{V_{21},U_{21}\}$ of length 1.
If both the first and the last inequalities are replaced by their opposite inequalities,
we get a chain $\{Z_{11},Z'_{01},\bar V'_{21}\}$ of length 2 and a chain $\{V'_{21},U_{21}\}$ of length 1.

The chains described in this example remain the same if we add pseudo-diagonals and attach disks to the sides of our quadrilaterals,
except if a disk $D$ is attached to the side of order 5 of $Z'_{10}$ (resp., $Z'_{01}$) the deformation to $V_{21}$ (resp., to $\bar V_{21}$)
becomes impossible even when (\ref{v21}) is satisfied, as the quadrilateral $Z'_{10}$ (resp., $Z'_{01}$) degenerates at the triple intersection. Accordingly, no disks can be attached to the ``long'' sides of order 7 of $V_{21}$, $\bar V_{21}$, $U_{21}$, and $\bar U_{21}$.}
\end{example}

\begin{example}\label{chain-zprime}
\normalfont The chains of quadrilaterals associated with the quadrilateral $Z'_{11}$ are shown in Fig.~\ref{fig:chainzprime}.
The angles of all quadrilaterals in Fig.~\ref{fig:chainzprime} are $\alpha,\,1+\beta,\,\gamma,\,3+\delta$.
The net $Z'_{11}$ has a quadrilateral face with the angles $(1-\alpha,1-\beta,1-\gamma,1-\delta)$ satisfying
\begin{equation}\label{eqn:zprime11}
2>\alpha+\beta+\gamma+\delta>2\max(\alpha,\beta,\gamma,\delta).
\end{equation}
If $\alpha+\beta<\gamma+\delta$ then the quadrilateral $Z'_{11}$ can be deformed through a triple intersection to the quadrilateral $Z_{21}$.
If $\beta+\gamma<\alpha+\delta$ then the quadrilateral $Z'_{11}$ can be deformed through a triple intersection to the quadrilateral $Z_{12}$.
Each of the other two deformations of the four-circle configuration to a triple intersection results in degeneration of the quadrilateral $Z'_{11}$.

The net $Z_{21}$ has a quadrilateral face with the angles $(1-\alpha,1-\beta,\gamma,\delta)$.
If $\alpha+\gamma>\beta+\delta$ then the quadrilateral $Z_{21}$ can be deformed through a triple intersection to the quadrilateral $Z'_{20}$.
The net $Z_{12}$ has a quadrilateral face with the angles $(\alpha,1-\beta,1-\gamma,\delta)$.
If $\alpha+\gamma>\beta+\delta$ then the quadrilateral $Z_{12}$ can be deformed through a triple intersection to the quadrilateral $Z'_{02}$.

The net $Z'_{20}$ has a quadrilateral face with the angles $(\alpha,1-\beta,\gamma,1-\delta)$.
If $\alpha+\delta>\beta+\gamma$ then the quadrilateral $Z'_{20}$ can be deformed through a triple intersection
to the quadrilateral $V_{31}$. If $\alpha+\delta<\beta+\gamma$ then the quadrilateral $Z'_{20}$ can be deformed through a triple intersection to the quadrilateral $V'_{31}$.
Note that these two deformations are not compatible.

The net $Z'_{02}$ has a quadrilateral face with the angles $(\alpha,1-\beta,\gamma,1-\delta)$.
If $\alpha+\beta<\gamma+\delta$ then the quadrilateral $Z'_{02}$ can be deformed through a triple intersection
to the quadrilateral $\bar V_{31}$. If $\alpha+\beta>\gamma+\delta$ then the quadrilateral $Z'_{02}$ can be deformed through a triple intersection to the quadrilateral $\bar V'_{31}$.
Note that these two deformations are not compatible.

The nets $V_{31}$, $V'_{31}$, $\bar V_{3,1}$ and $\bar V'_{31}$ have quadrilateral faces with the angles
$(\alpha,1-\beta,1-\gamma,\delta)$, $(1-\alpha,\beta,\gamma,1-\delta)$, $(1-\alpha,1-\beta,\gamma,\delta)$ and $(\alpha,\beta,1-\gamma,1-\delta)$,
respectively. Note that $V_{31}$ and $V'_{31}$ do not exist with the same values of the angles $\alpha,\beta,\gamma,\delta$.
Similarly, $\bar V_{31}$ and $\bar V'_{31}$ do not exist with the same values of the angles $\alpha,\beta,\gamma,\delta$.

Each of the nets $U_{31}$ and $\bar U_{31}$ has a quadrilateral face with the angles $(1-\alpha,\beta,1-\gamma,\delta)$ satisfying
\begin{equation}\label{eqn:u31}
0<\beta-\alpha+\delta-\gamma<2\min(1-\alpha,\beta,1-\gamma,\delta).
\end{equation}
If $\alpha+\gamma<\beta+\delta$ then each of the quadrilaterals $V_{31}$ and $V'_{31}$ can be deformed through a
triple intersection to the quadrilateral $U_{31}$, and  each of the quadrilaterals $\bar V_{31}$ and $\bar V'_{31}$ can be deformed through a triple intersection to the quadrilateral $\bar U_{31}$.
Note that these deformations are not compatible with the deformations between $Z_{21}$ and $Z'_{20}$, and between
$Z_{12}$ and $Z'_{02}$.

Summing up, if in addition to (\ref{eqn:zprime11}) the inequalities
\begin{equation}\label{eqn:v}
\alpha+\beta<\gamma+\delta,\quad\alpha+\delta>\beta+\gamma,\quad\alpha+\gamma>\beta+\delta
\end{equation}
are satisfied, then there is a chain $\{V_{31},Z'_{20},Z_{21},Z'_{11},Z_{12},Z'_{02},\bar V_{31}\}$ of length $6$.
Note that, according to Remark \ref{rmk:pyramid}, the last inequality in (\ref{eqn:zprime11}) can be removed
when the inequalities (\ref{eqn:v}) are satisfied.

If the first inequality in (\ref{eqn:zprime11}) is violated then $Z'_{11}$ is removed from the chain, and we have two chains of length $2$ each. If either the first or the second inequality in (\ref{eqn:v}) is violated then either $Z_{21}$ is removed and $\bar V_{31}$ is replaced by $\bar V'_{31}$,
or $Z_{12}$ is removed and $V_{31}$ is replaced by $V'_{31}$. In both cases we have a chain of length 3 and a chain of length 1.
If the third inequality in (\ref{eqn:v}) is violated then $Z'_{20}$ and $Z'_{02}$ are removed,
$U_{31}$ and $\bar U_{31}$ are added, and we have a chain of length $2$ and two chains of length $1$.

If, in addition to (\ref{eqn:zprime11}), only the first inequality in (\ref{eqn:v}) is satisfied, then there are three chains
$\{Z'_{11},Z_{21}\},\;\{V'_{31},U_{31}\}$ and $\{\bar V_{31},\bar U_{31}\}$ of length $1$.

If, in addition to (\ref{eqn:zprime11}), only the second inequality in (\ref{eqn:v}) is satisfied, then there are three chains
$\{Z'_{11},Z_{12}\},\;\{V_{31},U_{31}\}$ and $\{\bar V'_{31},\bar U_{31}\}$ of length $1$.

If, in addition to (\ref{eqn:zprime11}), only the third inequality in (\ref{eqn:v}) is satisfied, then there is a chain of length $0$ consisting
of a single net $Z'_{11}$, and two chains
$\{Z'_{20}, V'_{31}\}$ and $\{Z'_{02},\bar V'_{31}\}$ of length $1$.
\end{example}

\begin{example}\label{chain-w}
{\rm Fig.~\ref{fig:chainw} shows the chains of quadrilaterals associated with the quadrilateral $W_{22}$ with the angles $\alpha,\,2+\beta,\,\gamma,\,2+\delta$. The net of $W_{22}$ has a quadrilateral face with the angles $(1-\alpha,1-\beta,1-\gamma,1-\delta)$ satisfying
\begin{equation}\label{eqn:w}
2>\alpha+\beta+\gamma+\delta>2\max(\alpha,\beta,\gamma,\delta).
\end{equation}
If $\alpha+\beta<\gamma+\delta$, the quadrilateral $W_{22}$ can be deformed through a triple intersection
to a quadrilateral $V_{22}$. If $\alpha+\beta>\gamma+\delta$, the quadrilateral $W_{22}$ can be deformed through a triple intersection to a quadrilateral $V'_{22}$.
Note that these two transformations are incompatible.
If $\alpha+\beta+\gamma+\delta>2$, each of the quadrilaterals $V_{22}$ and $V'_{22}$ can be deformed
through a triple intersection to a quadrilateral $U_{22}$.
Note that these transformations are incompatible with the transformations between $V_{22}$ and $W_{22}$ or between $V'_{22}$ and $W_{22}$. In any case, we have one chain of length 1.}
\end{example}

\begin{prop}\label{x-even}
The chains containing generic quadrilaterals $X_{kl}$ and $X'_{pq}$ with even $n=k+l=p+q+1$ and fixed angles $\alpha,\,\beta,\,\gamma,\,n+\delta$ may be of the following kind:\newline
{\rm(i)} The chain $X_{0,n},\,X'_{0,n-1},\,X_{1,n-1},\dots,X_{n,0}$, of length $2n$, if
\begin{equation}\label{x-chain}
\alpha+\beta+\gamma+\delta>2,\;\alpha+\delta<\beta+\gamma,\;\alpha+\gamma<\beta+\delta,\;\alpha+\beta>\gamma+\delta.
\end{equation}
{\rm(ii)} If all inequalities except the first in (\ref{x-chain}) are satisfied, there are $n/2$ chains of length 2 obtained from the chain in (i) by removing all entries $X_{2m,n-2m}$ for $m=0,\dots,n/2$.\newline
{\rm(iii)} If all inequalities except the second in (\ref{x-chain}) are satisfied, there are $n/2$ chains of length 2 and one chain of length 0 obtained from the chain in (i) by removing all entries $X'_{2m,n-2m-1}$ for $m=0,\dots,n/2-1$.\newline
{\rm(iv)} If all inequalities except the third  in (\ref{x-chain}) are satisfied, there are $n/2+1$ chains of length 1 obtained from the chain in (i) by removing all entries $X_{2m+1,n-2m-1}$ for $m=0,\dots,n/2-1$.\newline
{\rm(v)} If all inequalities except the fourth in (\ref{x-chain}) are satisfied, there are $n/2$ chains of length 2 and one chain of length 0 obtained from the chain in (i) by removing all entries $X'_{2m+1,n-2m-2}$ for $m=0,\dots,n/2-1$.\newline
{\rm(vi)} If only the first and second inequalities in (\ref{x-chain}) are satisfied, there are $n/2$ chains of length $1$ and one chain of length 0 obtained from the chain in (i) by removing all entries $X_{2m+1,n-2m-1}$ and $X'_{2m+1,n-2m-2}$ for $m=0,\dots,n/2-1$.\newline
{\rm(vii)} If only the first and third inequalities in (\ref{x-chain}) are satisfied, there are $n+1$ chains of length $0$ obtained from the chain in (i) by removing all entries $X'_{m,n-m-1}$ for $m=0,\dots,n-1$.\newline
{\rm(viii)} If only the first and fourth inequalities in (\ref{x-chain}) are satisfied, there are $n/2$ chains of length $1$ and one chain of length 0 obtained from the chain in (i) by removing all entries $X'_{2m,n-2m-1}$ and $X_{2m+1,n-2m-1}$ for $m=0,\dots,n/2-1$.\newline
{\rm(ix)} If only the second and third inequalities in (\ref{x-chain}) are satisfied, there are $n/2$ chains of length $1$ obtained from the chain in (i) by removing all entries $X_{2m,n-2m}$ for $m=0,\dots,n/2$ and $X'_{2m+1,n-2m-2}$, for $m=0,\dots,n/2-1$.\newline
{\rm(x)} If only the second and fourth inequalities in (\ref{x-chain}) are satisfied, there are $n$ chains of length $0$ obtained from the chain in (i) by removing all entries $X_{m,n-m}$ for $m=0,\dots,n$.\newline
{\rm(xi)} If only the third and fourth inequalities in (\ref{x-chain}) are satisfied, there are $n/2$ chains of length $1$ obtained from the chain in (i) by removing all entries $X_{2m,n-2m}$ for $m=0,\dots,n/2$ and $X'_{2m,n-2m-1}$ for $m=0,\dots,n/2-1$.\newline
{\rm(xii)} If only the first inequality in (\ref{x-chain}) is satisfied, there are $n/2+1$ chains of length 0, each of them consisting of a single quadrilateral $X_{2m,n-2m}$ for $m=0,\dots n$.\newline
{\rm(xiii)} If only one inequality in (\ref{x-chain}), either second, third or fourth, is satisfied, there are $n/2$ chains of length 0, each of them consisting of a single quadrilateral.
\end{prop}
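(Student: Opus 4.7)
The plan is to realize the chain of case (i) as the concrete sequence of $2n+1$ nets
\[
X_{0,n},\;X'_{0,n-1},\;X_{1,n-1},\;X'_{1,n-2},\;\ldots,\;X'_{n-1,0},\;X_{n,0},
\]
and then read off cases (ii)--(xiii) by observing which sub-chains survive when various subsets of the four inequalities of (\ref{x-chain}) fail. Because $n=k+l$ is even (and equals $p+q+1$ for the $X'$-nets), Proposition \ref{even} applies uniformly along the sequence.

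First, I would use Proposition \ref{even} to read off the fixed angles $(a,b,c,d)$ of the quadrilateral face of the four-circle configuration $\P$ at each node. The resulting tuple depends only on the parity of $m$ and on the $X$-vs-$X'$ choice, yielding a period-$4$ cycle along the sequence: $(\alpha,\beta,\gamma,\delta)$ for $X_{2m,n-2m}$, $(1-\alpha,\beta,\gamma,1-\delta)$ for $X'_{2m,n-2m-1}$, $(1-\alpha,\beta,1-\gamma,\delta)$ for $X_{2m+1,n-2m-1}$, and $(\alpha,\beta,1-\gamma,1-\delta)$ for $X'_{2m+1,n-2m-2}$. Applying the existence inequality $a+b+c+d>2$ from Proposition \ref{abcd} to each of these four tuples reproduces exactly the four inequalities of (\ref{x-chain}), each governing the existence of one of the four corresponding net-families. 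Consecutive nodes in the sequence differ by complementing exactly one adjacent pair in $(a,b,c,d)$, which is the transformation-through-triple-intersection rule from Section \ref{sub:adjacent}.

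Second, at each would-be link I would apply Proposition \ref{degen} to check that, whenever both endpoints exist, the triple-intersection transition is realized with a non-degenerate limit quadrilateral. A direct computation parallel to Example \ref{chain-x} shows that the inequality governing each link coincides with the existence inequality of whichever endpoint is ``new'' in the forward direction of the chain. Consequently, when all four inequalities of (\ref{x-chain}) hold, every link is realized and the full sequence constitutes a single chain of length $2n$, proving (i). When an inequality fails, every net of the corresponding type vanishes, and the chain is severed at each such position with conformal degeneration replacing the missing node: the adjacent surviving configuration $\P$ can still be deformed past a triple intersection in the opposite direction, but by Proposition \ref{degen} this contracts a full side of the quadrilateral rather than a triangular face, sending the modulus to $0$ or $\infty$.

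Cases (ii)--(xiii) then follow by enumeration over the $2^{4}-1$ nontrivial subsets of failed inequalities, recording which net-types are excised and how the surviving nets cluster into connected sub-chains. The main obstacle I foresee is verifying that each severed endpoint really is a conformal degeneration rather than an escape into an unlisted net-type: at an $X$-endpoint this is clear from Lemma \ref{onebigangle}, which forbids other primitive nets with three corners of order $0$; at an $X'$-endpoint one must also inspect the remaining permissible deformation direction of $\P$ using Proposition \ref{degen} and confirm, with the help of Remark \ref{rmk:pyramid} and the $\min$-bound in (\ref{sum}), that the only option is conformal degeneration. Performing this verification carefully in each of cases (ii)--(xiii), while keeping track of the precise lengths and counts of the surviving sub-chains, is the step where I expect the technical bookkeeping to be heaviest.
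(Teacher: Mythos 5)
Your setup is the correct one and coincides with the method the paper itself uses (the paper states Proposition \ref{x-even} without proof; the intended argument is the one carried out in Examples \ref{chain-x}--\ref{chain-w}): your table of face angles read off from Proposition \ref{even} is right, and each of the four inequalities in (\ref{x-chain}) is indeed the area condition $a+b+c+d>2$ for exactly one of the four parity classes $X_{2m,n-2m}$, $X'_{2m,n-2m-1}$, $X_{2m+1,n-2m-1}$, $X'_{2m+1,n-2m-2}$. But the two steps you defer are where the content lies. The angle bookkeeping only determines the face angles of the configuration on the far side of a triple intersection; to conclude that the quadrilateral $X_{j,n-j}$ passes to $X'_{j,n-j-1}$ or $X'_{j-1,n-j}$ -- and not to a reflected net $\bar X$, $\bar X'$ (which Lemma \ref{onebigangle} equally allows for these angles), nor to an $X'_{k,l}$ with the same parities but different indices -- you must combine Lemma \ref{onebigangle} with Proposition \ref{even} (which shows the $\bar X'$ nets carry different face-angle tuples) and with a local analysis of how the net changes when the contracted triangular face is re-expanded. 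Your proposal never performs this identification, and it is precisely what makes the sequence in (i) a chain rather than a list of coexisting nets.

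Second, the enumeration you describe as routine does not simply ``follow.'' With the severing rule you state (delete each net whose existence inequality fails; the surviving consecutive runs are the chains, since a link is permitted exactly when both endpoints exist and non-degeneracy at a realized link is a property of the net alone), cases (ii), (iii) and (v)--(xiii) come out exactly as stated, but case (iv) does not: deleting $X_{2m+1,n-2m-1}$ at positions $2,6,\dots,2n-2$ of the $(2n+1)$-term sequence leaves the runs $\{X_{0,n},X'_{0,n-1}\}$ and $\{X'_{n-1,0},X_{n,0}\}$ of length $1$ together with $n/2-1$ interior runs $\{X'_{2m+1,n-2m-2},\,X_{2m+2,n-2m-2},\,X'_{2m+2,n-2m-3}\}$ of length $2$; this matches ``$n/2+1$ chains of length $1$'' only when $n=2$ (already the entry count $3n/2+1\ne n+2$ fails for $n>2$). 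Both links of an interior run require only the first, second and fourth inequalities, so I see no further degeneration mechanism that would cut them. You therefore have to either exhibit such a mechanism or recognize that your method establishes a corrected count in (iv); in either case the case analysis must actually be written out, together with the verification (again requiring the target-identification argument above) that each severed endpoint conformally degenerates rather than escaping to a net outside the listed family.
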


\begin{prop}\label{x-odd}
The chains containing generic quadrilaterals $X_{kl}$ and $X'_{pq}$ with odd $n=k+l=p+q+1\ge 3$ and fixed angles $\alpha,\,\beta,\,\gamma,\,n+\delta$ may be of the following kind:\newline
{\rm(i)} The chain $X_{0,n},\,X'_{0,n-1},\,X_{1,n-1},\dots,X_{n,0}$, of length $2n$, if
\begin{equation}\label{x-chain-odd}
\alpha+\beta+\delta>1+\gamma,\;\alpha+\gamma+\delta<1+\beta,\;\beta+\gamma+\delta>1+\alpha,\;\alpha+\beta+\gamma>1+\delta.
\end{equation}
{\rm(ii)} If all inequalities except the first in (\ref{x-chain-odd}) are satisfied, there are $(n-1)/2$ chains of length 2 and one chain of length 1 obtained from the chain in (i) by removing all entries $X_{2m,n-2m}$ for $m=0,\dots,(n-1)/2$.\newline
{\rm(iii)} If all inequalities except the second in (\ref{x-chain-odd}) are satisfied, there are $(n-1)/2$ chains of length 2 and two chains of length 0 obtained from the chain in (i) by removing all entries $X'_{2m,n-2m-1}$ for $m=0,\dots,(n-1)/2$.\newline
{\rm(iv)} If all inequalities except the third in (\ref{x-chain-odd}) are satisfied, there are $(n-1)/2$ chains of length 2 and one chain of length 1 obtained from the chain in (i) by removing all entries $X_{2m+1,n-2m-1}$ for $m=0,\dots,(n-1)/2$.\newline
{\rm(v)} If all inequalities except the fourth in (\ref{x-chain-odd}) are satisfied, there are $(n+1)/2$ chains of length 2 obtained from the chain in (i) by removing all entries $X'_{2m+1,n-2m-2}$ for $m=0,\dots,n/2-1$.\newline
{\rm(vi)} If only the first and second inequalities in (\ref{x-chain-odd}) are satisfied, there are $(n+1)/2$ chains of length $1$ obtained from the chain in (i) by removing all entries $X_{2m+1,n-2m}$ for $m=0,\dots,(n-1)/2$ and $X'_{2m+1,n-2m-2}$ for $m=0,\dots,(n-3)/2$.\newline
{\rm(vii)} If only the first and third inequalities in (\ref{x-chain-odd}) are satisfied, there are $n+1$ chains of length $0$ obtained from the chain in (i) by removing all entries $X'_{m,n-m-1}$ for $m=0,\dots,n-1$.\newline
{\rm(viii)} If only the first and fourth inequalities in (\ref{x-chain-odd}) are satisfied, there are $(n-1)/2$ chains of length $1$ and one chain of length 0 obtained from the chain in (i) by removing all entries $X_{2m+1,n-2m-1}$ for $m=0,\dots,(n-1)/2$ and $X'_{2m,n-2m-1}$ for $m=0,\dots,(n-1)/2$.\newline
{\rm(ix)} If only the second and third inequalities in (\ref{x-chain-odd}) are satisfied, there are $(n+1)/2$ chains of length $1$ obtained from the chain in (i) by removing all entries $X_{2m,n-2m}$ for $m=0,\dots,(n-1)/2$ and $X'_{2m+1,n-2m-2}$ for $m=0,\dots,(n-3)/2$.\newline
{\rm(x)} If only the second and fourth inequalities in (\ref{x-chain-odd}) are satisfied, there are $n$ chains of length $0$ obtained from the chain in (i) by removing all entries $X_{m,n-m}$ for $m=0,\dots,n$.\newline
{\rm(xi)} If only the third and fourth inequalities in (\ref{x-chain-odd}) are satisfied, there are $(n-1)/2$ chains of length $1$ and one chain of length 0 obtained from the chain in (i) by removing all entries $X_{2m,n-2m}$ and $X'_{2m+1,n-2m-2}$ for $m=0,\dots,(n-1)/2$.\newline
{\rm(xii)} If only one inequality, first, second or third in (\ref{x-chain-odd}) is satisfied, there are $(n+1)/2$ chains of length 0, each of them consisting of a single quadrilateral.\newline
{\rm(xiii)} If only fourth inequality in (\ref{x-chain-odd}) is satisfied, there are $(n-1)/2$ chains of length 0, each of them consisting of a single quadrilateral.
\end{prop}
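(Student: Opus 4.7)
The plan is to reduce the proposition to three computations: (i) existence conditions for each individual net in the putative chain, (ii) conditions under which consecutive entries are linked by a triple-intersection transition, and (iii) combinatorial enumeration of the surviving subchains.

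I will begin by applying Proposition \ref{odd} to read off the fixed angles $(a,b,c,d)$ of the quadrilateral face of the underlying four-circle partition for each entry of the putative chain $X_0,X'_0,X_1,X'_1,\ldots,X_n$, where $X_k=X_{k,n-k}$ and $X'_p=X'_{p,n-p-1}$. Since $n$ is odd, the four cases (by parity of the first index) produce $(\alpha,\beta,1-\gamma,\delta)$ for $X_{2m,n-2m}$, $(1-\alpha,\beta,\gamma,\delta)$ for $X_{2m+1,n-2m-1}$, $(1-\alpha,\beta,1-\gamma,1-\delta)$ for $X'_{2m,n-2m-1}$, and $(\alpha,\beta,\gamma,1-\delta)$ for $X'_{2m+1,n-2m-2}$. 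Imposing $a+b+c+d>2$ from Proposition \ref{abcd} on each case then reduces to exactly one of the four inequalities in \eqref{x-chain-odd}: ineq $1$ governs existence of the even-indexed $X$'s, ineq $2$ the even-indexed $X'$'s, ineq $3$ the odd-indexed $X$'s, and ineq $4$ the odd-indexed $X'$'s (the remaining inequalities of \eqref{sum} follow automatically, as in Remark \ref{rmk:pyramid}).

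Next I will determine the links. By Proposition \ref{degen} at most two of the four possible triple-intersection deformations are geometrically allowed at each generic configuration; a case check in the spirit of Example \ref{chain-x} (tracking which side of $Q$ is contracted under each deformation) will show that exactly one of the allowed deformations carries the quadrilateral itself to a non-degenerate spherical limit, the others producing conformal degenerations. The unique non-degenerate link at each entry will connect it to the next entry in the putative sequence, and I will verify that the required inequality reduces to the existence condition of the destination: $X_{2m}\to X'_{2m}$ (complementing $a$ and $d$) requires ineq $2$; $X'_{2m}\to X_{2m+1}$ (complementing $c$ and $d$) requires ineq $3$; $X_{2m+1}\to X'_{2m+1}$ (complementing $a$ and $d$) requires ineq $4$; $X'_{2m+1}\to X_{2m+2}$ (complementing $c$ and $d$) requires ineq $1$. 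Thus a link exists exactly when both of its endpoints exist.

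Finally, enumeration. The putative chain interleaves $(n+1)/2$ even-$X$, $(n+1)/2$ even-$X'$, $(n+1)/2$ odd-$X$, and $(n-1)/2$ odd-$X'$ entries in a period-$4$ pattern beginning and ending with an even-$X$. When an inequality fails, all entries of the associated type vanish and the surviving maximal runs of consecutive entries are the resulting subchains. A direct check over the $2^4$ subsets of $\{1,2,3,4\}$ of satisfied inequalities will reproduce exactly the thirteen cases listed (the all-fail case being vacuous for this proposition); in particular, the asymmetry between (xii) and (xiii) reflects that only the odd-$X'$ type contributes $(n-1)/2$ isolated survivors rather than $(n+1)/2$. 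The main obstacle will be the ``exactly one non-degenerate deformation'' claim in the second step, which needs a separate verification of the four potential triple intersections at each of the four net types, modeled on the explicit analysis in Example \ref{chain-x}; everything else is bookkeeping.
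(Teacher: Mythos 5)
The paper gives no explicit proof of this proposition; it is meant to follow from the machinery of Section~\ref{section:chains} (Propositions~\ref{abcd}, \ref{degen}, \ref{odd} and the transformation rules of Subsection~\ref{sub:adjacent}), with Example~\ref{chain-x} as the template. Your plan is exactly that intended route, and the substantive computations you report are correct: the fixed angles $(a,b,c,d)$ you read off from Proposition~\ref{odd} for the four parity classes are right, the assignment of each inequality in (\ref{x-chain-odd}) to the existence of one entry type checks out against $a+b+c+d>2$, and each transition condition from Proposition~\ref{degen} (complementing $a,d$ via $a+d<b+c$, complementing $c,d$ via $a+b>c+d$) does reduce to the existence condition of the destination net, so ``a link exists iff both endpoints exist'' is the correct structure.

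Two concrete slips need repair before the bookkeeping can be trusted. First, your claim that at each entry \emph{exactly one} of the allowed deformations yields a non-degenerate spherical limit cannot be right as stated: by symmetry of the transition through a triple intersection, every \emph{interior} entry of the chain admits two non-degenerate deformations (one to its predecessor, one to its successor); if each net had only one, every chain would have length at most $1$, contradicting the length-$2n$ chain in (i). What actually requires verification (and what Example~\ref{chain-x} verifies for $n=1$) is that the deformations \emph{other} than those leading to adjacent entries of the list conformally degenerate the quadrilateral, so that only the terminal entries $X_{0,n}$ and $X_{n,0}$ have a single non-degenerate deformation. Second, for odd $n$ the chain does \emph{not} begin and end with an even-indexed $X$: it begins with $X_{0,n}$ (governed by the first inequality) but ends with $X_{n,0}$, an odd-indexed $X$ governed by the third inequality. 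This asymmetry of the period-four pattern $ABCDABCD\cdots ABC$ is precisely what produces the differing counts in, e.g., (ii) versus (v) and (xii) versus (xiii); your final remark about the $(n-1)/2$ odd-$X'$ entries shows you have the type counts right, but the endpoint misstatement would derail the case-by-case enumeration, which in any event you defer rather than carry out.
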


\subsection{Lower bounds on the number of generic spherical quadrilaterals with given angles.}\label{sub:bounds}
Given a generic spherical quadrilateral $Q_0$, we want to understand how many
generic spherical quadrilaterals with the same angles and the same modulus as $Q_0$ may exist.
The chains of quadrilaterals provide a lower bound for that number.

For each quadrilateral $Q$ with the same angles and modulus as $Q_0$, consider the chain $\mathcal C$ of quadrilaterals containing $Q$.
If the quadrilaterals at both ends of $\mathcal C$ conformally degenerate, the modulus of these quadrilaterals converges either to $0$
at both ends, or to $\infty$ at both ends, or else to $0$ at one end and to $\infty$ at another.
We claim that the first two options are realized when $\mathcal C$ has odd length (contains an even number of nets)
while the third possibility is realized when $\mathcal C$ has even length.

Let $Q$ be a spherical quadrilaterals with the corners $a_0,\dots,a_3$ and the sides $[a_{j-1},a_j]$ mapped to
the circle $C_j$ of a generic four-circle configuration $\P$.
According to \cite{EGT2}, Lemma 13.1 (see also \cite{EGP}, Lemma A4), when $\P$ degenerates to a four-circle configuration
with a triple intersection, $Q$ conformally degenerates with the modulus tending to $0$ when intrinsic
distance between its sides mapped to $C_1$ and $C_3$ tends to $0$ while intrinsic distance between its other two sides does not.
Accordingly, $Q$ conformally degenerates with the modulus tending to $\infty$ when intrinsic
distance between its sides mapped to $C_2$ and $C_4$ tends to $0$ while intrinsic distance between its other two sides of does not.

When the configuration $\P$ degenerates to a four-circle configuration with a triple intersection that does not include a circle $C_j$,
the quadrilateral $Q$ conformally degenerates only when an arc $\gamma$ of its net $\Gamma$ is contracted to a point.
This happens when $\gamma$ is mapped to the complement of $C_j$, if the following three conditions are satisfied:
\begin{itemize}
\item $\gamma$ has order 1 (either it is an interior arc without interior vertices or a boundary arc without lateral vertices),
\item two ends of $\gamma$ are on the opposite sides of $Q$, mapped to $C_k$ and $C_\ell$ where $k$ and $\ell$ have opposite parity of $j$,
\item none of the two ends of $\gamma$ is mapped to $C_j$.
\end{itemize}

Since the values $j$ and $j'$ for the two ends of $I_\Gamma$ always have opposite parities,
the arcs $\gamma$ and $\gamma'$ contracted to points in the limits at the two ends of any chain $\mathcal C$ have the ends
on the same opposite sides of $Q$ if $\mathcal C$ has odd length
and on different opposite sides of $Q$ if $\mathcal C$ has even length.

If a chain $\mathcal C$ has even length (contains an odd number of nets) and the quadrilaterals at both ends of $\mathcal C$
 conformally degenerate, then the limit of the values of modulus at one end of $\mathcal C$ is $0$,
 and the limit at its other end is $\infty$.
 This implies that there exists at least one quadrilateral with a net in in $\mathcal C$ with any given modulus $0<K<\infty$.
Thus the number of chains $\mathcal C$ such that length of $\mathcal C$ is even
and quadrilaterals at both ends of $\mathcal C$ conformally degenerate is a lower bound for the number of quadrilaterals with the
given angles and modulus.

If a chain $\mathcal C$ has odd length (contains an even number of nets) and the quadrilaterals at both ends of $\mathcal C$
 conformally degenerate, then the limits of the values of modulus at its ends are either both $0$ or both $\infty$.
 In the first (resp., second) case, there exist two quadrilaterals with the nets in $\mathcal C$ with small enough
 (resp., large enough) modulus.

Finally, if the quadrilaterals at only one end of a chain $\mathcal C$ of any length conformally degenerate,
with the limit of the values of modulus at that end is $0$ (resp., $\infty$) then there is exactly one quadrilateral
with the net in $\mathcal C$ with small enough (resp., large enough) modulus..

Thus chains of generic quadrilaterals allow one to count
the number of quadrilaterals with the given angles and either small enough or large enough modulus.

\begin{example}\label{rs-modulus}{\rm Quadrilaterals with the angles $(\alpha,\beta,1+\gamma,1+\delta)$ satisfying (\ref{eqn:r})
in Example \ref{chain-rs} belong to a single chain $\mathcal C$ of length 1, consisting of the quadrilaterals with the nets $R_{11}$ and $S_{11}$ when $\alpha+\beta+\gamma+\delta<2$.
At both ends of that chain, the modulus tends to $0$, thus the lower bound for the number of quadrilaterals with the given modulus is $0$.
Since $\mathcal C$ is the only chain of quadrilaterals with such angles, there are exactly two quadrilaterals with such angles for small enough values of the modulus, and no quadrilaterals with such angles for large enough values of the modulus.

When $\alpha+\beta+\gamma+\delta>2$,
there may be either one or two chains of length $0$, with the nets either $R_{11}$ or $P_0\cup D_{15}$, or both.
This implies that there may be either at least one or at least two spherical quadrilaterals with such angles for any value of the modulus,
depending on the angles.}
\end{example}

\begin{example}\label{x-modulus-even}
{\rm Quadrilaterals with the angles $(\alpha,\beta,\gamma,n+\delta)$, where $n$ is even, are considered in Proposition \ref{x-even} (see also Example \ref{chain-x} for $n=2$).
If the inequalities (\ref{x-chain}) are satisfied (item (i) of Proposition \ref{x-even}) then there is a single chain of quadrilaterals with such angles of even length $2n$,
such that the modulus tends to $0$ at the $X_{0,n}$ end of the chain and to $\infty$ at the $X_{n,0}$ end.
This gives the lower bound of one quadrilateral for each value of the modulus.
If the first inequality in (\ref{x-chain}) is $\alpha+\beta+\gamma+\delta<2$ instead of $\alpha+\beta+\gamma+\delta>2$ (item (ii) of Proposition \ref{x-even}), then there are $n/2$ chains of length 2, and the lower bound becomes$n/2$. The maximal lower bound $n+1$ appears when the second and fourth inequalities in (\ref{x-chain}) are reversed (item (vii) of Proposition \ref{x-even}).}
\end{example}

\bigskip

\bigskip
\noindent{\em Department of Mathematics, Purdue Univ., West Lafayette, IN 47907-2067 USA}
\smallskip
{\em gabrielov@purdue.edu}
\end{document}